\def\l@part{\@tocline{0}{4pt plus2pt}{0pt}{}{\bfseries}}
\newcommand{\init}{\mathrm{init}}
\newcommand{\Conf}{\mathrm{Conf}}
\newcommand{\ab}{\mathrm{ab}}
\newcommand{\SYM}{\widehat{\rm Sym}}
\DeclareMathOperator{\chr}{char}
\newcommand{\OI}{\mathbf{OI}}
\newcommand{\FA}{\mathbf{FA}}
\newcommand{\FI}{\mathbf{FI}}
\newcommand{\FS}{\mathbf{FS}}
\newcommand{\FB}{\mathbf{FB}}
\newcommand{\FWS}{\mathbf{FWS}}
\newcommand{\OWS}{\mathbf{OWS}}
\newcommand{\Set}{\mathbf{Set}}
\title{Representations of categories of $G$-maps}
\date{September 6, 2016}
\subjclass[2010]{%
05A15, %Exact enumeration problems, generating functions
13P10, %Gr\"obner bases; other bases for ideals and modules (e.g., Janet and border bases)
16P40, %Noetherian rings and modules
18A25, %Functor categories, comma categories
68Q70.%Algebraic theory of languages and automata
}
\author{Steven V Sam}
\address{Department of Mathematics, University of California, Berkeley, CA}
\curraddr{Department of Mathematics, University of Wisconsin, Madison, WI}
\email{\href{mailto:svs@math.wisc.edu}{svs@math.wisc.edu}}
\urladdr{\url{http://math.wisc.edu/~svs/}}
\author{Andrew Snowden}
\address{Department of Mathematics, University of Michigan, Ann Arbor, MI}
\email{\href{mailto:asnowden@umich.edu}{asnowden@umich.edu}}
\urladdr{\url{http://www-personal.umich.edu/~asnowden/}}
\thanks{S. Sam was supported by a Miller research fellowship and NSF grant DMS-1500069.}
\thanks{A. Snowden was supported by NSF grants DMS-1303082 and DMS-1453893.}
\begin{document}

\begin{abstract}
We study representations of wreath product analogues of categories of finite sets. This includes the category of finite sets and injections (studied by Church, Ellenberg, and Farb) and the opposite of the category of finite sets and surjections (studied by the authors in previous work). We prove noetherian properties for the injective version when the group in question is polycyclic-by-finite and use it to deduce general twisted homological stability results for such wreath products and indicate some applications to representation stability. We introduce a new class of formal languages (quasi-ordered languages) and use them to deduce strong rationality properties of Hilbert series of representations for the surjective version when the group is finite. 
\end{abstract}

\maketitle

\tableofcontents

\section{Introduction}

In \cite{fimodules}, Church, Ellenberg, and Farb studied representations of the category $\FI$, consisting of finite sets with injective maps, and found numerous applications to topology and algebra. In \cite{catgb}, we studied representations of the closely related category $\FS^{\op}$ (in addition to many other examples), where $\FS$ is the category of finite sets with surjective maps, and also found several applications (e.g., to the Lannes--Schwartz artinian conjecture and to $\Delta$-modules). In this paper, we study generalizations of $\FI$ and $\FS^{\op}$ in which a group $G$ has been added to the mix. Our main tool is the theory developed in \cite{catgb}; in fact, the example $\FS_G^{\op}$ was a primary source of motivation for that paper. In the rest of the introduction, we summarize our motivations and results.

\subsection{$G$-maps}

Let $G$ be a group. A {\bf $G$-map} between finite sets $R$ and $S$ is a pair $(f, \rho)$ consisting of functions $f \colon R \to S$ and $\rho \colon R \to G$. If $(f, \rho) \colon R \to S$ and $(g, \sigma) \colon S \to T$ are two $G$-maps, their composition $(h, \tau) \colon R \to T$ is defined by $h=g \circ f$ and $\tau(x) = \sigma(f(x))\rho(x)$ (the product taken in $G$). We let $\FA_G$ be the category whose objects are finite sets and whose morphisms are $G$-maps. We let $\FI_G$ (resp.\ $\FS_G$) be the subcategory where the function $f$ is injective (resp.\ surjective). We note that the automorphism group of the set $[n]=\{1, \ldots, n\}$ in any of these categories is the wreath product $S_n \wr G$. Thus a representation of any of these categories can be thought of as a sequence $(M_n)_{n \ge 0}$, where $M_n$ is a representation of $S_n \wr G$, equipped with certain transition maps between $M_n$ and $M_{n+1}$. (The kind of transition maps depends on the category.) These representations are the subject of this paper.

\begin{remark}
A $G$-map $R \to S$ is the same as a $G$-equivariant map $R \times G \to S \times G$. Thus $\FA_G$ is equivalent to the category whose objects are free $G$-sets with finitely many orbits, and whose morphisms are $G$-equivariant functions.
\end{remark}

\subsection{The category $\FI_G$}

Usually, noetherianity is the first property one wants to establish about the representation theory of a category. (See \S \ref{ss:rep} for the definition of ``noetherian'' in this context.) For $\FI$, this was proved in \cite{delta-mod,fimodules,fi-noeth,catgb}, in varying levels of generality. Our main result about $\FI_G$ characterizes when representations are noetherian:

\begin{theorem}
\label{mainthm:fi1}
Let $\bk$ be a ring. Then $\Rep_{\bk}(\FI_G)$ is noetherian if and only if the group algebra $\bk[G^n]$ is left-noetherian for all $n \ge 0$.
\end{theorem}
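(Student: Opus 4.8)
The plan is to reduce both implications to the principal projective objects $P_n := \bk[\Hom_{\FI_G}([n],-)]$ of $\Rep_\bk(\FI_G)$. Since every finitely generated object is a quotient of a finite direct sum of $P_n$'s and each $P_n$ is itself finitely generated (by $1_{[n]}$ in degree $n$), the category $\Rep_\bk(\FI_G)$ is noetherian if and only if each $P_n$ is a noetherian object. Moreover $\Aut_{\FI_G}([n]) = S_n\wr G = G^n\rtimes S_n$, so $\bk[S_n\wr G]$ is free of finite rank $n!$ as both a left and a right $\bk[G^n]$-module; hence $\bk[S_n\wr G]$ is left-noetherian if and only if $\bk[G^n]$ is (one direction because $\bk[S_n\wr G]$ is then a finitely generated, hence noetherian, left $\bk[G^n]$-module and its $\bk[S_n\wr G]$-submodules are in particular $\bk[G^n]$-submodules; the other by faithfully flat descent along $\bk[G^n]\hookrightarrow\bk[S_n\wr G]$). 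Thus it suffices to prove: $P_n$ is noetherian for all $n$ if and only if $\bk[S_n\wr G]$ is left-noetherian for all $n$.

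\emph{The ``only if'' direction.} For a subrepresentation $N\subseteq P_n$, the $\bk$-submodule $N([n])$ of $(P_n)([n]) = \bk[\Aut_{\FI_G}([n])] = \bk[S_n\wr G]$ is a left ideal, being stable under the degree-$n$ action of $\Aut_{\FI_G}([n])$ (which is left multiplication). The assignments $N\mapsto N([n])$ and $I\mapsto\langle I\rangle$ (the subrepresentation generated by a left ideal $I\subseteq (P_n)([n])$) are order-preserving, and $\langle I\rangle([n]) = \bk[\Aut_{\FI_G}([n])]\cdot I = I$ since $I$ is a left ideal; thus $N\mapsto N([n])$ is an order-preserving retraction of the lattice of subrepresentations of $P_n$ onto the lattice of left ideals of $\bk[S_n\wr G]$. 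Hence if $P_n$ is noetherian then $\bk[S_n\wr G]$ is left-noetherian, and therefore $\bk[G^n]$ is left-noetherian by the previous paragraph.

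\emph{The ``if'' direction: reduction.} Assume $\bk[G^n]$ is left-noetherian for all $n$. Introduce the ``ordered'' wreath category $\OI_G$, with objects the finite ordered sets $[n]$ and morphisms $[m]\to[n]$ the pairs $(f,\rho)$ with $f$ an order-preserving injection and $\rho\colon[m]\to G$, composed by the same rule as in $\FI_G$; so $\Aut_{\OI_G}([n]) = G^n$. There is an evident functor $\Phi\colon\OI_G\to\FI_G$ that is the identity on objects and the inclusion on morphisms. As in the treatment of $\OI\to\FI$ in \cite{catgb}, $\Phi$ is essentially surjective and satisfies property (F): any $(f,\rho)\colon[m]\to[n]$ in $\FI_G$ factors as $(b,1)\circ(f',\rho)$, where $f'$ is the order-preserving injection with the same image as $f$, $b\in S_n$ is the resulting permutation, and $(f',\rho)\in\OI_G$, and the finitely many $(b,1)\in\Hom_{\FI_G}([n],[n])$ witness (F). By the property-(F) transfer results of \cite{catgb}, $\Phi^*$ sends finitely generated $\FI_G$-modules to finitely generated $\OI_G$-modules and reflects noetherianity, so it suffices to show that $\Rep_\bk(\OI_G)$ is noetherian.

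\emph{The ``if'' direction: main step.} This is where the hypothesis on $\bk[G^n]$ enters and where I expect the real work to lie: one runs the Gröbner-basis machinery of \cite{catgb} ``over $\OI$ with $G$-twisted coefficients.'' Working with a finitely generated free $\OI_G$-module, its basis elements (``monomials'') are triples consisting of an index, an order-preserving injection, and a decoration in some $G^{n_j}$; forgetting the decoration yields the monomial's ``$\OI$-shadow,'' and the set of shadows is well-quasi-ordered for the divisibility order and admits a compatible admissible order because $\OI$ is Gröbner \cite{catgb}. The essential subtlety — and the reason one cannot simply assert that $\OI_G$ is Gröbner — is that the composition law twists the decorations via $\tau(x)=\sigma(f(x))\rho(x)$, so the decorations genuinely must be carried along as coefficients. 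For a submodule $N$ one forms its initial submodule, which at each shadow slot is a left $\bk[G^{n_j}]$-submodule of $\bk[G^{n_j}]$; Dickson's lemma applied to the well-quasi-ordered shadows shows that only finitely many slots are relevant for minimal generators, and at each the ring $\bk[G^{n_j}]$ is left-noetherian by hypothesis, so finitely many elements of $N$ generate its initial submodule, hence $N$. This gives the ascending chain condition for $\Rep_\bk(\OI_G)$, completing the argument. The main obstacle is precisely this marriage of the well-quasi-ordering of $\OI$-shadows with noetherianity of the vertex rings $\bk[G^n]$ through the twisted composition law — in particular, verifying that leading terms and initial submodules behave correctly when the coefficient rings vary with degree and are noncommutative, and checking property (F) for $\Phi$ in exactly the form required by the transfer results.
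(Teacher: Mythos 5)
Your overall architecture is exactly the paper's: the converse via the identification of $\FI_G$-modules supported in degree $n$ with modules over $\bk[S_n\wr G]$ (a twisted group algebra over $\bk[G^n]$), followed by descent to $\bk[G^n]$; and the forward direction via the ordered category $\OI_G$, property~(F) for the forgetful functor $\OI_G\to\FI_G$, and a Gr\"obner-type argument for $\Rep_\bk(\OI_G)$. The converse and the reduction to $\OI_G$ are complete and correct (the paper replaces your faithfully flat descent with an elementary lemma building an $R[S_n]$-module $\bigoplus_h M^h$ from an $R$-module $M$, but both work).

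The gap is in the main step. You correctly flag that one cannot simply declare $\OI_G$ Gr\"obner and that the twisted composition law forces the decorations to be carried as coefficients, but you then leave unverified precisely the claim the whole argument hinges on. Concretely: set $R=\bk[G^n]$ and let $R$ act on the principal projective $P_n$ of $\OI_G$ by $b\,e_{(f,a)}=e_{(f,ba)}$, i.e.\ by modifying the decoration at the source. A $\bk[\OI_G]$-submodule $M\subseteq P_n$ is \emph{not} in general an $R[\OI]$-submodule --- there is no way to emulate this source-side action by post-composition --- so you cannot directly apply noetherianity of $\Rep_R(\OI)$ to $M$, and your phrase ``its $\bk[G^{n_j}]$-submodules at each shadow slot'' glosses over this. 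What saves the argument is that the \emph{initial} submodule $\init(M)$ (with respect to the lexicographic order on $\OI$-shadows) \emph{is} $R$-stable, and proving this requires an idea you don't supply: given $x=\sum_i b_ie_{f_i}$ with leading term $b_1e_{f_1}$ and $a\in G^n$, use surjectivity of the pullback $f_1^*\colon G^m\to G^n$ to lift $a$ to $\wt a\in G^m=\Aut_{\OI_G}([m])$; then $\wt a\,x=ab_1e_{f_1}+\sum_{i>1}f_i^*(\wt a)b_ie_{f_i}$ lies in $M$, and since $a$ is a unit of $R$ its leading term is exactly $a\cdot\init(x)$ (the lower-order coefficients get multiplied by the possibly different elements $f_i^*(\wt a)$, which is harmless). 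With this, chains of submodules of $P_n$ give chains of $R[\OI]$-submodules of initial modules, which stabilize because $\Rep_R(\OI)$ is noetherian for $R$ left-noetherian, and the standard Gr\"obner argument finishes. Without this lifting trick your ``marriage'' of the well-quasi-order on shadows with noetherianity of $\bk[G^n]$ does not go through, so the proof as written is incomplete at its central point.
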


Recall that a group $G$ is {\bf polycyclic} if it has a finite composition series 
\[
1 = G_0 \subseteq G_1 \subseteq \cdots \subseteq G_r = G
\]
such that $G_i / G_{i-1}$ is cyclic for $i=1,\dots,r$, and it is {\bf polycyclic-by-finite} (or {\bf virtually polycyclic}) if it contains a polycyclic subgroup of finite index. It is known \cite[\S 2.2, Lemma 3]{hall} that the group ring of a polycyclic-by-finite group over a left-noetherian ring is left-noetherian (there it is stated for the integral group ring, but the proof works for any left-noetherian coefficient ring). In fact, there are no other known examples of noetherian group algebras, but see \cite{ivanov} for related results. Since a finite product of polycyclic-by-finite groups is again polycyclic-by-finite, the above theorem gives:

\begin{corollary}
\label{cor:polycyclic}
Let $G$ be a polycyclic-by-finite group and let $\bk$ be a left-noetherian  ring. Then $\Rep_{\bk}(\FI_G)$ is noetherian.
\end{corollary}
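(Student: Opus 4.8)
The plan is to deduce this immediately from Theorem~\ref{mainthm:fi1}: by that theorem it suffices to verify that the group algebra $\bk[G^n]$ is left-noetherian for every $n \ge 0$, so no genuinely new argument is needed beyond combining the theorem with the standard facts about polycyclic-by-finite groups already recalled in the text.

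First I would check that the class of polycyclic-by-finite groups is closed under finite direct products. If $H \le G$ is a polycyclic subgroup of finite index, then $H^n \le G^n$ has index $[G:H]^n < \infty$, and a polycyclic composition series for $H^n$ with cyclic successive quotients is obtained by concatenating $n$ suitably shifted copies of one for $H$; hence $G^n$ is polycyclic-by-finite for all $n \ge 0$ (the case $n=0$ being the trivial group, whose group algebra is $\bk$ itself). Next I would invoke the cited theorem of P.~Hall, \cite[\S 2.2, Lemma 3]{hall} (valid over any left-noetherian coefficient ring, as noted): the group ring of a polycyclic-by-finite group over a left-noetherian ring is left-noetherian. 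Since $\bk$ is left-noetherian by hypothesis and $G^n$ is polycyclic-by-finite, we conclude $\bk[G^n]$ is left-noetherian for all $n$, and Theorem~\ref{mainthm:fi1} then gives that $\Rep_{\bk}(\FI_G)$ is noetherian.

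There is essentially no obstacle at the level of the corollary; all of the difficulty resides in Theorem~\ref{mainthm:fi1}, whose proof presumably runs an inductive Gr\"obner-type argument (in the spirit of \cite{catgb}) reducing noetherianity of $\FI_G$-modules to left-noetherianity of the group algebras $\bk[G^n]$, together with the converse direction showing these group algebras embed as endomorphism-type rings forcing the stated equivalence.
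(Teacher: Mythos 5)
Your proposal is correct and matches the paper's own deduction exactly: the paper likewise observes that a finite product of polycyclic-by-finite groups is polycyclic-by-finite, invokes Hall's theorem to conclude $\bk[G^n]$ is left-noetherian, and then applies Theorem~\ref{mainthm:fi1}. No issues.
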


When $G$ is a finite group, we prove a stronger result:

\begin{theorem}
\label{mainthm:fi2}
If $G$ is finite then the category $\FI_G$ is quasi-Gr\"obner.
\end{theorem}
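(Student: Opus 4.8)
The plan is to apply the framework of \cite{catgb}: it suffices to produce a Gr\"obner category $\OI_G$ together with an essentially surjective functor $\Phi\colon \OI_G \to \FI_G$ having property (F), for then $\FI_G$ is quasi-Gr\"obner by definition. This runs parallel to the proof in \cite{catgb} that $\FI$ is quasi-Gr\"obner, where one uses the Gr\"obner category $\OI$ of totally ordered finite sets with order-preserving injections and the order-forgetting functor $\OI \to \FI$. The one genuinely new feature is that $G$-labels must be carried along, and this is exactly what forces us to assume $G$ is finite.

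I would let $\OI_G$ be the category whose objects are words $a = (a_1, \dots, a_m)$ over the alphabet $G$ (equivalently: finite totally ordered sets with a $G$-coloring of the elements), and in which a morphism $a \to b$, with $b = (b_1, \dots, b_n)$, is an order-preserving injection $f\colon [m] \to [n]$ such that $a_i = b_{f(i)}$ for all $i$; composition is composition of functions. Since the only order-preserving injection $[n]\to[n]$ is the identity, $\OI_G$ is directed. I would define $\Phi\colon \OI_G \to \FI_G$ on objects by $a \mapsto [m]$, and on morphisms by sending $f$ to the $G$-map $(f,e)$, where $e$ is the function with constant value $1 \in G$; a short check with the composition law for $G$-maps shows $\Phi$ is a functor, and it is visibly essentially surjective.

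The first substantive step is to verify that $\OI_G$ is Gr\"obner. Fix an object $a$ of length $m$. A morphism of $\OI_G$ with source $a$ is the same as a word $b$ over $G$ equipped with a positioned realization of $a$ as a subword, and I would encode such a datum as a single word over the \emph{finite} alphabet $G \sqcup \{\bullet\}$, flagging the positions of $b$ that come from $a$ by $\bullet$ (their letters being determined, hence not recorded) and letting the remaining positions carry their letters of $b$. Under this encoding the poset of morphisms out of $a$ becomes the subword order on the regular language of words over $G \sqcup \{\bullet\}$ with exactly $m$ occurrences of $\bullet$. One checks that this language is ordered in the sense of \cite{catgb}, and it is Noetherian by Higman's lemma because the alphabet is finite; hence $\OI_G$ is Gr\"obner. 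This identification of the morphism posets of $\OI_G$ with an ordered language over a finite alphabet is the technical heart of the argument, and it is precisely where finiteness of $G$ enters; everything else is the expected adaptation of the $\FI$ case.

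Finally I would check that $\Phi$ has property (F). Fix $k$; as the witnessing data take all words $c \in G^k$ together with all of $\operatorname{Aut}_{\FI_G}([k]) = S_k \wr G$, regarded as morphisms $[k] \to \Phi(c) = [k]$ --- a finite set, since $G$ is finite. Given any $G$-map $(f,\rho)\colon [k] \to \Phi(b) = [n]$, let $\pi \in S_k$ sort $f$ into increasing order, so $f = g\pi$ with $g\colon [k]\to[n]$ order-preserving and injective, and put $c = (b_{g(1)}, \dots, b_{g(k)})$. Then $g$ is a morphism $c \to b$ of $\OI_G$, and $\Phi(g)\circ(\pi,\rho) = (f,\rho)$, so $(f,\rho)$ factors through the witness indexed by $(c,(\pi,\rho))$. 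Thus $\Phi$ satisfies property (F), and $\FI_G$ is quasi-Gr\"obner.
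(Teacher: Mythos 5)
Your proof is correct, but it takes a genuinely different route from the paper's. The paper proves this as part of Proposition~\ref{prop:FIG-propertyF} by a short reduction: it uses the functor $\Phi\colon \FI \to \FI_G$, $f \mapsto (f,1)$, checks that it is essentially surjective and satisfies property~(F) --- the witnesses at a set $x$ of size $n$ being the finitely many automorphisms $(\mathrm{id}_x, g)$ of $x$, since any $(f,\rho)$ factors as $(f,1)\circ(\mathrm{id}_x,\rho)$ --- and then invokes the known fact that $\FI$ is quasi-Gr\"obner together with Proposition~\ref{potgrob}. You instead unfold that black box: you build a $G$-colored ordered category, prove it is Gr\"obner by encoding its morphism posets as subword posets over the finite alphabet $G \sqcup \{\bullet\}$ and applying Higman's lemma, and map it onto $\FI_G$ directly, which matches the definition of quasi-Gr\"obner verbatim. (Note that your $\OI_G$ is not the category the paper later calls $\OI_G$ in the proof of Theorem~\ref{mainthm:fi1}: there the objects are plain ordered sets and the $G$-data lives on the morphisms.) Your route is more self-contained and, as a by-product, produces an explicit lingual structure one could use for Hilbert series; the paper's is much shorter because it reuses \cite[Theorem~7.1.4]{catgb}. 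Both arguments use finiteness of $G$ in the same two places: to keep the alphabet finite for Higman's lemma (hidden, in the paper's version, inside the result on $\FI$), and to keep the set of property-(F) witnesses finite.

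Two small points to tighten, neither of which affects correctness. First, Gr\"obner-ness of your $\OI_G$ requires not only that $\vert S_a\vert$ be noetherian but also that $S_a$ be \emph{orderable}, i.e., admit well-orders on each $S_a(b)$ strictly compatible with postcomposition; your phrase ``this language is ordered in the sense of \cite{catgb}'' conflates this with the unrelated notion of an \emph{ordered language} used there for Hilbert series. The fix is routine: lexicographic order on your encoding words works, exactly as for $\OI$. Second, when identifying $\vert S_a\vert$ with the subword order you should observe that any subword embedding $w_f \hookrightarrow w_g$ necessarily matches the $m$ occurrences of $\bullet$ bijectively and in order (since $\bullet$ is not a letter of $G$ and both words contain exactly $m$ of them), so that the embedding really does reconstruct a morphism $h$ of $\OI_G$ with $h\circ f = g$.
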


``Quasi-Gr\"obner'' is a purely combinatorial condition on a category, introduced in \cite{catgb} (and recalled in \S \ref{ss:rep} below), that implies noetherianity of the representation category. Thus Theorem~\ref{mainthm:fi2} implies Theorem~\ref{mainthm:fi1} when $G$ is finite, as stated. However, quasi-Gr\"obner gives more than just noetherianity: it implies that representations admit a theory of Gr\"obner bases, in an appropriate sense, and thus computations with representations can be carried out algorithmically (at least in principle).

The proof of Theorem~\ref{mainthm:fi2} is an easy consequence of the theory developed in \cite{catgb}. The proof of Theorem~\ref{mainthm:fi1} is more involved. The key input is the fact that $\Rep_{\bk'}(\FI)$ is noetherian whenever $\bk'$ is left-noetherian. This is applied with $\bk'=\bk[G^n]$, so even if one only cares about Theorem~\ref{mainthm:fi1} when $\bk$ is a field, the proof uses $\FI$-modules over non-commutative rings. In fact, this is the first real application of $\FI$-modules over non-commutative rings that we know of.

There is one additional result on $\FI_G$-modules worth mentioning:

\begin{theorem}
\label{mainthm:fi3}
Suppose that $G$ is finite and $\bk$ is a splitting field for $G$ in which the order of $G$ is invertible. Then representations of $\FI_G$ are equivalent to representations of $\FI \times \FB^r$, where $r$ is the number of non-trivial irreducible representations of $G$ over $\bk$.
\end{theorem}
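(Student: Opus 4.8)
The plan is to compare the $\bk$-linear Karoubi envelopes of the two categories. For any small category $\mathcal C$, the functor category $\mathrm{Fun}(\mathcal C,\bk\text{-Mod})$ is unchanged if we replace $\mathcal C$ by the additive $\bk$-linear Karoubi envelope $(\bk\mathcal C)^{\mathrm{kar}}$ of its $\bk$-linearization, because $\bk$-modules form an idempotent-complete $\bk$-linear category (so every $\bk$-linear functor on $\bk\mathcal C$ extends uniquely to $(\bk\mathcal C)^{\mathrm{kar}}$). Thus it suffices to exhibit an equivalence of $\bk$-linear categories $(\bk\FI_G)^{\mathrm{kar}}\simeq(\bk(\FI\times\FB^r))^{\mathrm{kar}}$, which will then transport $\Rep_\bk(\FI_G)$ to $\Rep_\bk(\FI\times\FB^r)$.

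Because $|G|$ is invertible in the splitting field $\bk$, we have $\bk[G]\cong\prod_{i=0}^r\mathrm{Mat}_{d_i}(\bk)$ with one factor for each irreducible $\bk[G]$-module $V_i$, the factor $i=0$ being the trivial module. Fix a primitive idempotent $f_i$ in the $i$-th factor, so that $f_i\bk[G]f_i=\bk f_i$; we may take $f_0=\frac1{|G|}\sum_{g\in G}g$, and the augmentation $\bk[G]\to\bk$ sends $f_i$ to $1$ if $i=0$ and to $0$ otherwise. For a finite set $S$ and $\phi\colon S\to\{0,\dots,r\}$ put $f_\phi=\bigotimes_{s\in S}f_{\phi(s)}\in\bk[G^S]=\operatorname{End}_{\bk\FI_G}(S)$, an idempotent, and write $(S,f_\phi)$ for the resulting object of $(\bk\FI_G)^{\mathrm{kar}}$; call $(|\phi^{-1}(0)|,\dots,|\phi^{-1}(r)|)$ its \emph{type}. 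Two bookkeeping points: first, these objects generate $(\bk\FI_G)^{\mathrm{kar}}$, since $1\in\bk[G^S]$ is an orthogonal sum of conjugates of the various $f_\phi$ (write each $\mathrm{Id}_{\mathrm{Mat}_{d_i}(\bk)}$ as a sum of $d_i$ conjugates of $f_i$ and tensor over $S$), and conjugate idempotents cut out isomorphic summands; second, a permutation of $S$ identifies $(S,f_\phi)$ with $(S,f_{\phi'})$ whenever $\phi,\phi'$ have equal type, while no identifications across different cardinalities are possible (no injections run the wrong way), so the isomorphism classes of these generators biject with $\mathbb Z_{\ge 0}^{r+1}$, matching the objects of $\FI\times\FB^r$.

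The crux is the Hom computation $\operatorname{Hom}_{(\bk\FI_G)^{\mathrm{kar}}}((S,f_\phi),(T,f_\psi))=f_\psi\cdot\bk[\operatorname{Inj}(S,T)\times G^S]\cdot f_\phi$. The relevant Clifford-theoretic facts are: (a) an $\FI_G$-morphism $(f,\rho)$ is literally fixed by postcomposition with the $G$-factor at any $t\notin f(S)$, so such a $t$ contributes the augmentation of $f_{\psi(t)}$, which is $0$ unless $\psi(t)=0$; and (b) the $s$-th tensor slot forces $\psi(f(s))=\phi(s)$, since distinct matrix blocks are orthogonal. Together these show that $f_\psi\,\bk[\operatorname{Inj}(S,T)\times G^S]\,f_\phi$ has $\bk$-basis $\{(f,1)f_\phi\}$ indexed by the injections $f$ that preserve colors on $f(S)$ and whose complement in $T$ is colored $0$ --- equivalently, an injection $\phi^{-1}(0)\hookrightarrow\psi^{-1}(0)$ together with bijections $\phi^{-1}(i)\to\psi^{-1}(i)$ for $i\ge1$. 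This is exactly $\operatorname{Hom}_{\FI\times\FB^r}$ between the corresponding objects, and composition matches on the nose: $(g,1)\,f_\psi\,(f,1)\,f_\phi=(gf,1)f_\phi$, the middle idempotent being absorbed, so basis vectors compose to basis vectors with no scalar or cocycle correction. Hence the full subcategory of $(\bk\FI_G)^{\mathrm{kar}}$ on one generator per type is isomorphic, as a $\bk$-linear category, to $\bk(\FI\times\FB^r)$; passing to Karoubi envelopes and invoking the generation statement completes the argument. Unwinding the equivalence, an $\FI_G$-module $M$ corresponds to the tuple whose $i=0$ entry is the $\FI$-module $S\mapsto M(S)^{G^S}$ and whose $i$-th entry ($i\ge1$) is the $\FB$-module $S\mapsto\operatorname{Hom}_{\bk[G^S]}(V_i^{\otimes S},M(S))$; the $\FI$-transition maps survive only on the trivial-isotypic factor, precisely because of (a).

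The part I expect to be the main obstacle is the Hom computation together with the claim that the resulting identification with $\bk(\FI\times\FB^r)$ is an isomorphism of $\bk$-linear categories on the nose, rather than only up to a twist. This is why one must work with primitive idempotents $f_i$, not the central block idempotents: it makes $f_i\bk[G]f_i$ one-dimensional and makes the symmetric-group actions on the $\FB$-factors come out untwisted, reflecting the fact that $V_i^{\otimes\bullet}$ is an honest (not merely projective) representation of the wreath products $S_n\wr G$. One must also track the augmentation and orthogonality constraints in (a) and (b) carefully enough to see that the ``new'' points of $T$ account for exactly the $\FI$-growth in the trivial color; once that is in hand, the $d_i>1$ bookkeeping is routine.
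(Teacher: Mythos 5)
Your argument is correct, and it reaches the conclusion by a genuinely different (Morita-theoretic) route than the paper. The paper works directly with a module $M$: it decomposes $M(S)$ into $G^S$-isotypic pieces, observes that a morphism $(f,\sigma)$ is determined by $(f,1)$ up to automorphisms of the target, and that $(f,1)$ lands in the $G^{T\setminus f(S)}$-invariants, so the transition maps descend to the multiplicity spaces and produce an $\FI\times\FB^r$-module. You instead prove the stronger statement that the additive Karoubi envelopes of $\bk[\FI_G]$ and $\bk[\FI\times\FB^r]$ are equivalent, by cutting with the tensor idempotents $f_\phi$ and computing $f_\psi\,\bk[\operatorname{Inj}(S,T)\times G^S]\,f_\phi$. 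The two key inputs are the same in substance --- your point (a) (new points of $T$ contribute the augmentation of $f_{\psi(t)}$, which vanishes off the trivial block) is exactly the paper's observation that $(f,1)$ maps into $G^{T\setminus f(S)}$-invariants, and your point (b) (block orthogonality forces color preservation) is the isotypic bookkeeping --- but your packaging buys a cleaner statement: the equivalence is visibly one of $\bk$-linear categories with composition matching on the nose, the choice of \emph{primitive} rather than central idempotents correctly avoids any twist on the $\FB$-factors, and the generation/conjugacy argument replaces the paper's "no information is lost" remark. Two harmless slips you may want to fix: $\bk[G^S]$ is a subalgebra of $\operatorname{End}_{\bk\FI_G}(S)=\bk[\operatorname{Sym}(S)\ltimes G^S]$, not equal to it; and your final "unwinding" describes only the values of the resulting $\FI\times\FB^r$-module on objects supported in a single color --- the value on a general object $(A_0,\dots,A_r)$ is the multiplicity space $\operatorname{Hom}_{G^A}(V_0^{\boxtimes A_0}\boxtimes\cdots\boxtimes V_r^{\boxtimes A_r},M(A))$ with $A=\coprod_i A_i$, which is not an external tensor product of separate functors. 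Neither point affects the validity of the proof.
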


Here $\FB$ is the groupoid of finite sets, i.e., the category of finite sets with bijections as morphisms. Thus, in the context of the theorem, the theory of $\FI_G$-modules reduces to the theory of $\FI$-modules.

\subsection{The category $\FS_G$}

We only study $\FS_G$ when $G$ is a finite group. The first result is about noetherianity, and is an easy consequence of the theory of \cite{catgb}:

\begin{theorem}
\label{mainthm:fs1}
If $G$ is finite then $\FS_G^{\op}$ is quasi-Gr\"obner. In particular, if $\bk$ is left-noetherian, then $\Rep_{\bk}(\FS_G^{\op})$ is noetherian.
\end{theorem}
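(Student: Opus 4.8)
The plan is to run the machinery of \cite{catgb}: if $\mathcal{D}$ is a Gr\"obner category and $\Phi\colon\mathcal{D}\to\mathcal{C}$ is a functor that is essentially surjective and satisfies property~(F), then $\mathcal{C}$ is quasi-Gr\"obner; and any quasi-Gr\"obner category has noetherian representations over an arbitrary left-noetherian ring. Thus the ``in particular'' clause is immediate from the first assertion together with the cited noetherianity theorem of \cite{catgb}, and the content is to prove that $\FS_G^{\op}$ is quasi-Gr\"obner.

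For this I would introduce $\OS_G$, the evident $G$-decorated analogue of the category $\OS$ of ordered surjections from \cite{catgb}: objects are finite totally ordered sets, morphisms are $G$-surjections, and --- exactly as for $\OS$ --- the total orders are used to encode each morphism canonically as a word, and thereby to linearly order the morphism sets. The order-forgetting functor $\OS_G\to\FS_G$ induces $\Phi\colon\OS_G^{\op}\to\FS_G^{\op}$; it is essentially surjective because every finite set can be totally ordered, and it satisfies property~(F) by the same argument as for $\OS^{\op}\to\FS^{\op}$ in \cite{catgb}, the only new ingredient being the $G$-labels carried by the $\rho$-components, of which there are only finitely many because $G$ is finite. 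It then remains to prove that $\OS_G^{\op}$ is Gr\"obner. Directedness is inherited from $\OS^{\op}$, and the substantive point is that for each object the poset of morphisms out of it is an \emph{ordered} poset in the sense of \cite{catgb}: realized as a language of words, it must lie in a class closed under the required operations and carry a noetherian order on normed words. Here finiteness of $G$ is essential, since it keeps the relevant alphabet --- of the form $S\times G$ for morphisms into $S$ --- finite, so that this language is still built by finite-state means from the $\OS$-language; one then concludes either by rerunning the proof that $\OS^{\op}$ is Gr\"obner over the enlarged alphabet, or by invoking the closure properties of ordered languages from \cite{catgb}.

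I expect the main obstacle to be precisely this last step --- verifying the ordered-language property for the $G$-decorated surjection language --- together with the bookkeeping needed to fix the normalization defining $\OS_G$ so that $\Phi$ genuinely has property~(F) and morphisms biject cleanly with words. No idea beyond \cite{catgb} should be required: once $\OS_G^{\op}$ is known to be Gr\"obner and $\Phi$ is known to be essentially surjective with property~(F), the quasi-Gr\"obner property of $\FS_G^{\op}$ --- and hence noetherianity of $\Rep_{\bk}(\FS_G^{\op})$ for left-noetherian $\bk$ --- follows formally from the cited results of \cite{catgb}.
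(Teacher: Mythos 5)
Your reduction strategy has a genuine gap at the step you yourself flag as the ``substantive point'': the category $\OS_G^{\op}$ you propose as the Gr\"obner cover is not Gr\"obner, and no re-run of the ordered-language argument over the alphabet $S \times G$ can fix this. The problem is not the language class but orderability and directedness. In $\OS_G$ the automorphism group of $[n]$ is $G^n$ (the pairs $(\id,\eta)$ with $\eta \colon [n] \to G$ arbitrary), so $\OS_G^{\op}$ has nontrivial self-maps. For any object $x$, a nontrivial automorphism $\alpha$ of $y$ induces a nontrivial permutation of the finite set $S_x(y)$, and a strictly order-preserving bijection of a well-ordered finite set must be the identity; hence $S_x$ is not orderable and Definition~\ref{def:grobner-cat} fails. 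Concretely, post-composition with $(\id,\eta)$ scrambles the $G$-labels in the word encoding, so lexicographic order is not preserved. This is precisely the obstruction the paper confronts for $\OI_G$ (see Remark~\ref{rmk:emulate}), where the Gr\"obner framework has to be abandoned in favor of a direct noetherianity argument; the weighted category $\OWS_\Lambda$ of \S\ref{sec:weighted-surj} evades it only because the weights live on \emph{objects} and are required to be preserved by morphisms, which kills the automorphisms --- but that construction needs $\Lambda$ abelian and is not what you have defined.

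The intended argument is much simpler and requires no new Gr\"obner category. Use the functor $\Phi \colon \FS^{\op} \to \FS_G^{\op}$ induced by $f \mapsto (f,1)$, i.e.\ decorating a plain surjection with trivial $G$-labels. It is essentially surjective, and it satisfies property~(F) by the factorization in Proposition~\ref{FSGpropG}: call $(f,\sigma) \colon y \to x$ \emph{minimal} if the induced map $y \to x \times G$ is injective; then every $G$-surjection $y \to x$ factors as a plain surjection (a $\Phi$-image) followed by a minimal one, and finiteness of $G$ bounds minimal morphisms by $\# y \le \# x \cdot \# G$, so there are only finitely many up to isomorphism. Since $\FS^{\op}$ is quasi-Gr\"obner by \cite[Theorem~8.1.2]{catgb}, Proposition~\ref{potgrob} gives that $\FS_G^{\op}$ is quasi-Gr\"obner, and Theorem~\ref{grobnoeth} gives the noetherianity statement. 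Your framing of the ``in particular'' clause and of the general quasi-Gr\"obner transfer mechanism is correct; it is only the choice of cover that needs to be replaced.
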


We next turn to Hilbert series. Suppose that $M$ is an $\FS_G^{\op}$-module over a field $\bk$, and suppose that $G$ has $r$ irreducible representations over $\bk$. We define a formal power series $\rH_M(\bt) \in \bQ \lbb t_1, \ldots, t_r \rbb$, called the {\bf Hilbert series} of $M$, that records the class of $M([n])$ in the representation ring of $G^n$ for all $n \ge 0$. Our main result is a rationality result for this series. The strongest and most general result takes some preparation to state, so we confine ourselves to the following simplified form here (which is a special case of Theorem~\ref{FSGhilb}):

\begin{theorem}
\label{mainthm:fs2}
Let $M$ be a finitely generated representation of $\FS_G^{\op}$ over an algebraically closed field $\bk$. Let $N$ be the exponent of the group $G$. Then $\rH_M(\bt)$ can be written in the form $f(\bt)/g(\bt)$, where $f$ and $g$ are polynomials in the $t_i$ with coefficients in $\bQ(\zeta_N)$, and $g$ factors over $\ol{\bQ}$ as $\prod_{k=1}^n (1-\lambda_k)$, where $\lambda_k$ is a $\bZ[\zeta_N]$-linear combination of the $t_i$. {\rm (}And $\zeta_N \in \ol{\bQ}$ is a primitive $N$th root of unity.{\rm )}
\end{theorem}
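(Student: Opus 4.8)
The plan is to run the Gr\"obner-basis machinery of \cite{catgb}, refined so as to retain the $G$-action, and then to read the Hilbert series off from a combinatorial count. The shape of the answer is already visible on the projectives $P_n = \bk[\FS_G^{\op}([n], -)]$: here $P_n([m])$ is a direct sum of copies of $\bk[G^m]$, one for each of the $n!\,S(m,n)$ surjections $[m]\surj[n]$, with $G^m$ acting by translation, so the class of $P_n([m])$ in $R(G^m)$ is $n!\,S(m,n)\cdot[\bk[G]]^{\otimes m}$ and
\[
\rH_{P_n}(\bt) = \frac{n!\, u^n}{\prod_{k=1}^{n}(1 - k u)}, \qquad u = \sum_{i}(\dim V_i)\, t_i ;
\]
the denominator already has the asserted form, with $\lambda_k = k u$ a $\bZ$-linear combination of the $t_i$, the root of unity $\zeta_N$ entering only for non-projective modules through the representation ring of $G$. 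For general $M$ the first step is Theorem~\ref{mainthm:fs1}: $\FS_G^{\op}$ is quasi-Gr\"obner, so there is a Gr\"obner category $\mathcal{C}$ --- an ``ordered'' variant of the wreath-surjection category, whose morphisms out of $[n]$ are encoded by words over the alphabet $[n]\times G$ --- and a functor $\mathcal{C}\to\FS_G^{\op}$ with property~(F). Restricting $M$ along this functor changes neither the representations $M([m])$ nor the part of their symmetry that $\rH_M$ records, and yields a finitely generated $\mathcal{C}$-module, so I may assume $M$ is such.

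Next I would fix a Gr\"obner order on $\mathcal{C}$ and pass to the initial module $\ini(M)$. In the ungraded theory $\dim_\bk\ini(M)([m]) = \dim_\bk M([m])$; here the real task is to upgrade this to an equality of classes in $R(G^m)$, so that $\rH_{\ini(M)}(\bt) = \rH_M(\bt)$. This should follow from translation-invariance of an admissible order, which makes the monomial filtration of $M([m])$ compatible with the $\Aut([m])$-action; concretely, one keeps the $G$-labels on the standard monomials and checks that the associated graded is $\ini(M)([m])$ as a $G^m$-representation. Now $\ini(M)$ is a monomial module, and its Hilbert series is the sum over the finitely many generators $[n_j]$ of the $\bt$-generating function of a language $L_j$ of words over $[n_j]\times G$ --- the complement, inside $\Hom_{\mathcal{C}}([n_j], -)$, of the corresponding submonomial module --- where a length-$m$ word contributes the class in $R(G^m)$ of its line. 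The fact that $\Hom_{\mathcal{C}}([n_j], -)$ is an ordered language (from \cite{catgb}), together with these representation-ring-valued letter weights, is what I would abstract into the notion of a \emph{quasi-ordered language}: an ordered language over an alphabet fibered, with group fibers, over a smaller one (here $[n_j]\times G\to[n_j]$), carrying $R(G)$-valued letter weights; as in \cite{catgb} one then shows that this class of languages is stable under the ideal- and complement-operations forced by monomial submodules.

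Finally, it remains to show that the $\bt$-generating function of a quasi-ordered language has the stated form. Since $G$ has exponent $N$, all of its character values lie in $\bZ[\zeta_N]$, so $R(G)\otimes_{\bZ}\bQ(\zeta_N) \cong \prod_{\chi}\bQ(\zeta_N)$, a finite product over the irreducible characters. Applying each projection turns the letter weights into elements of $\bQ(\zeta_N)$ and reduces the problem to the weighted generating function of an ordinary ordered language, which by the transfer-matrix method is a rational function over $\bQ(\zeta_N)$ whose denominator is a product of factors $1 - c$, one for each cycle of the recognizing automaton, $c$ being the product of the letter weights around that cycle. For the automaton of (sub)words of $\Hom_{\mathcal{C}}([n_j], -)$ these cycles are the self-loops at the ``$k$ fibers active'' states, $k = 1,\ldots,n_j$, and such a loop carries weight $k$ times a $\bZ[\zeta_N]$-linear combination of the $t_i$ --- the $k$ counting the active fibers the next letter may land in, the $\bZ[\zeta_N]$-combination recording which $G$-isotypic components that letter may carry. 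Recombining the $\chi$-components and clearing denominators then gives $\rH_M(\bt) = f(\bt)/g(\bt)$ with $f, g \in \bQ(\zeta_N)[\bt]$ and $g = \prod_{k=1}^{n}(1 - \lambda_k)$, each $\lambda_k$ a $\bZ[\zeta_N]$-linear combination of the $t_i$, which is the claim.

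I expect the main obstacle to be the junction of the last two steps. On one side one must be sure that passing to the initial module is genuinely compatible with the full $G^m$-representation structure --- the admissible order cannot literally be $\Aut([m])$-invariant, so one must track the $G$-labels by hand --- and that the resulting decorated languages really do form a class closed under the needed bookkeeping. On the other side, one must see that the transfer matrices occurring here, whose entries involve arbitrary characters of $G$, produce a denominator in the rigid shape $\prod_k(1-\lambda_k)$ with each $\lambda_k$ a $\bZ[\zeta_N]$-linear combination of the $t_i$, rather than merely some element of $\bQ(\zeta_N)(\bt)$. Pinning down the correct definition of quasi-ordered language and proving that its generating functions have exactly this form is the crux; the finiteness of $N$ is what makes the character decomposition available over one fixed cyclotomic field.
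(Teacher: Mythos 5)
Your sketch has the right flavor (Gr\"obner degeneration, languages, roots of unity from characters of $G$), and your computation of $\rH_{P_n}$ is correct, but the two steps you yourself flag as the ``crux'' are precisely where the argument breaks down, and the paper's actual proof goes around them rather than through them. The first gap is the passage to the initial module. An admissible order is a well-order on each $\Hom$-set, so it cannot be $G^m$-invariant; the group $G^m$ permutes the monomial basis of $P_n([m])$ \emph{freely} by translation on the $\rho$-labels, so a single standard monomial does not span a $G^m$-stable line (``the class in $R(G^m)$ of its line'' is undefined), and the span of the standard monomials of a submodule need not be a union of $G^m$-orbits, hence $\ini(M)([m])$ need not be a $G^m$-subrepresentation at all. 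Thus the asserted equality $\rH_{\ini(M)}=\rH_M$ of representation-ring-valued Hilbert series has no meaning as stated, and it is exactly the representation-ring refinement (not the dimension count) that the theorem is about. The paper never runs the lingual machinery on $\FS_G^{\op}$ for non-abelian $G$. Instead it first uses Brauer's theorem to produce a family of subgroups $H_j$ for which $\cR_{\bk}(G)\to\bigoplus_j\cR_{\bk}(H_j^{\ab})$ is a split injection, reducing the claim to \emph{abelian} groups of invertible order (Lemmas~\ref{FSGlem3}--\ref{FSGlem1}); there the isotypic decomposition converts the $G^m$-action into a grading by weightings $S\to\Lambda$ ($\Lambda$ the character group), $\FS_G^{\op}$-modules become $\FWS_\Lambda^{\op}$-modules, and the ordered category $\OWS_\Lambda^{\op}$ is shown to be Gr\"obner by a genuine well-quasi-ordering argument (minimal bad sequences plus zero-sum subsequences, Proposition~\ref{prop:sigmastar-noeth}) that your proposal does not address and which is not supplied by Theorem~\ref{mainthm:fs1} (whose Gr\"obner category carries no $G$-data).

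The second gap is the definition of ``quasi-ordered language'' and the rigid denominator. In the paper a quasi-ordered language is the intersection of an ordered language with a \emph{congruence} language (the fiber of a monoid homomorphism $\Sigma^\star\to\Lambda$ over a subset of $\Lambda$); the congruence conditions arise because a map of weighted sets must push weights forward, so the letters discarded when passing from a word to a subword must sum to $0$ in $\Lambda$ (Lemmas~\ref{lem:ows-2}--\ref{lem:ows-4}). The roots of unity then enter by multisection: writing the indicator function of $S\subseteq\Lambda$ as a $\bQ(\zeta_N)$-combination of characters expresses $\rH_{\cL\cap\cK}$ as a combination of cyclotomic translates $t_i\mapsto\zeta_i t_i$ of $\rH_{\cL}$ (Lemma~\ref{lem:cyc-trans}), and since ordered languages already have denominators $\prod(1-\sum n_it_i)$ with $n_i\in\bN$, the translates have the required $\bZ[\zeta_N]$-linear $\lambda_k$. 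Your alternative --- $R(G)$-valued letter weights plus a transfer-matrix computation --- would at best produce a denominator $\det(1-A(\bt))$ with no a priori linear factorization, as you concede; moreover the proposed decomposition $R(G)\otimes\bQ(\zeta_N)\cong\prod_\chi\bQ(\zeta_N)$ amounts to evaluating the series at $t_j=\chi_j(c)$, and these evaluations do not jointly determine an element of $\SYM(\cR_{\bk}(G)_{\bQ})$, so ``recombining the $\chi$-components'' cannot recover $\rH_M(\bt)$. To repair the proof you would need to supply the reduction to abelian groups (or some substitute for it) and the congruence-language mechanism; as written, the proposal is a program whose two load-bearing steps are missing.
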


To paraphrase: if $M$ is a finitely generated $\FS_G^{\op}$-module then the representations $M([n])$ of $G^n$ satisfy recursive relations of a very particular form. We show this by connecting these Hilbert series for the generating functions of ``quasi-ordered languages'' which we introduce in this paper. To give a sense of how this connects to our previous paper \cite{catgb}: there we showed that finitely generated $\FS^\op$-representations have Hilbert series similar to those of $\FS_G^\op$, except with $\bZ$ in place of $\bZ[\zeta_N]$. The difference is that such Hilbert series are controlled by ``ordered languages'', and quasi-ordered languages are built from these with additional congruence conditions, which is why roots of unity appear.

This is by far the deepest result of the paper, and much of our work goes into its proof. The idea is to first use Brauer's theorem to reduce to the case where $G$ is a cyclic group whose order is invertible in $\bk$. Representations of $G$ are identified with vector spaces graded by the character group $\Lambda$ of $G$, and, in a similar fashion, representations of $\FS_G^{\op}$ with those of $\FWS_{\Lambda}^{\op}$, a certain category whose objects are finite sets in which each element is weighted by an element of $\Lambda$. An ordered version of this category is amenable to the theory of lingual structures developed in \cite{catgb}, which produces rationality results as above for Hilbert series.

\begin{remark}
We did not state any theorems about Hilbert series of $\FI_G$-modules. When $G$ is finite, one can use the fact that the Hilbert series of a finitely generated $\FI_G$-module is also the Hilbert series of a finitely generated $\FI$-module (use Proposition~\ref{prop:FIG-propertyF}), together with known results on Hilbert series of ordinary $\FI$-modules \cite[Theorem~B]{fi-noeth}, \cite[Corollary~7.1.7]{catgb}.
\end{remark}

\subsection{Applications and motivations}

\begin{itemize}
\item The category $\FI_{\bZ/2}$ is equivalent to the category $\FI_{\rm BC}$ defined in \cite[Defn.~1.2]{wilson}. It is possible to define and prove properties about modified versions of our categories to include her category $\FI_{\rm D}$; we leave the modifications to the reader. We point the reader to \cite{wilson-repstab, wilson} for applications of the category $\FI_{\rm BC}$. 

\item Applications of $\FI_G$-modules to topology and group theory are studied in \cite{casto}.

\item In \S\ref{sec:wreath-stab}, we show that wreath product generalizations of Murnaghan's stability theorem follow from the noetherian property for $\FI_G$-modules.

\item In \S\ref{ss:twisted-stab}, we use the machinery developed in \cite{putman-sam} applied to the category $\FI_G$ to prove general twisted homological stability results for wreath products $S_n \ltimes G^n$ when $G$ is a polycyclic-by-finite group.

\item Let $M$ be a simply-connected manifold of dimension at least $3$. In \cite{kupers-miller}, it is shown that the rational homotopy groups of the configuration spaces of $M$ satisfy representation stability, i.e., are finitely generated $\FI$-modules (see \cite{fimodules}). In \S\ref{ss:homotopy-conf}, we outline how this result might be extended when we drop the assumption that $M$ is simply-connected by using the category $\FI_{\pi_1(M)}$.

\item As we explain in \S \ref{ss:gendelta}, representations of the category $\FS_G^{\op}$ when $G$ is a symmetric group are essentially equivalent to $\Delta$-modules, in the sense of \cite{delta-mod}. This observation was our original source of motivation for studying the category $\FS_G^{\op}$: since $\FS_G^{\op}$-modules are much easier to think about than $\Delta$-modules, this point of view could be profitable. Indeed, our results on $\FS_G^{\op}$-modules imply the main theorems about $\Delta$-modules from \cite{delta-mod}, and more, and are more straightforward than the proofs given there. These theorems apply for any $G$, though, and so represent a significant generalization of the theory of $\Delta$-modules.

\item The main theorem on Hilbert series of $\Delta$-modules in \cite{delta-mod}, Theorem~B, was suspected to be suboptimal. Our original motivation in proving Theorem~\ref{mainthm:fs2} was to improve \cite[Theorem~B]{delta-mod}, which it does. We subsequently found an even stronger improvement, which appears in \cite[Theorem~9.2.3]{catgb}. However, \cite[Theorem~9.2.3]{catgb} is very specific to $\Delta$-modules, while Theorem~\ref{mainthm:fs2} applies to any group $G$.
\end{itemize}

\subsection{Open problems}

\subsubsection{Optimal results for Hilbert series of $\FS_G^{\op}$-modules} \label{optimalhilb}

Let $G$ be a finite group and let $\bk$ be an algebraically closed field. An interesting problem is to determine the smallest (or at least a small) field $F \subset \ol{\bQ}$ with the following property: if $M$ is a finitely generated $\FS_G^{\op}$-module over $\bk$ then $\rH_M(\bt)$ can be written in the form $f(\bt)/g(\bt)$ where $f \in F[\bt]$ and $g(\bt)$ factors as $\prod (1-\lambda_i)$ where each $\lambda_i$ is a linear combination of the $\bt$ with coefficients in the ring of integers $\cO_F$.

We prove that one can take $F \subset \bQ(\zeta_N)$, where $N$ is the exponent of $G$. In fact, we show that $F \subset \bQ(\zeta_N)$ if $G$ is ``$N$-good.'' For example, $G=S_n$ is 2-good (in good characteristic), and so $F=\bQ$ in this case. In characteristic~0, it follows from Example~\ref{hilbexample} that $F$ must contain the field generated by the character table of $G$. Can one always take $F$ to be the field generated by the Brauer character table of $G$?

\subsubsection{A reconstruction problem} \label{q:recon}
Theorem~\ref{mainthm:fi3} shows that $\Rep_{\bk}(\FI_G)$ knows very little of $G$ (only the number of irreducible representations). In contrast, $\Rep_{\bk}(\FS_G^{\op})$ knows a lot about $G$: for instance, it knows about tensor products of $G$ representations. It seems reasonable to think one could recover $G$ from $\Rep_{\bk}(G)$.

Here is a precise question. Let $G$ and $H$ be finite groups and let $\bk$ be an algebraically closed field. Suppose that $\Rep_{\bk}(\FS_G^{\op})$ and $\Rep_{\bk}(\FS_H^{\op})$ are equivalent as $\bk$-linear abelian categories. Are $G$ and $H$ isomorphic?

\subsection{Outline}

On a first reading, we encourage the reader to go through \S\ref{sec:fig} and \S\ref{sec:FSG} to see the main results. \S\ref{sec:fig} is mostly self-contained except for the background material on representations of categories in \S\ref{sec:background}. 

In \S \ref{sec:background}, we review material that we will use often, especially the main results from \cite{catgb}. In \S\ref{sec:fig} we investigate the category $\FI_G$, and prove Theorems~\ref{mainthm:fi1},~\ref{mainthm:fi2}, and~\ref{mainthm:fi3}. In \S \ref{ss:qordered}, we introduce a class of formal languages, the {\bf quasi-ordered languages}, and prove results about their Hilbert series. In \S \ref{sec:weighted-surj}, we discuss representations of the category $\FWS_{\Lambda}$ of finite weighted sets. Quasi-ordered languages are used to establish the main result about Hilbert series of representations of this category, and this result about Hilbert series is the key input to the proof of Theorem~\ref{mainthm:fs2}. In \S \ref{sec:FSG}, we study $\FS_G^\op$. We prove Theorems~\ref{mainthm:fs1} and~\ref{mainthm:fs2}, discuss the connection to $\Delta$-modules, and give some examples. 

\subsection{Notation}

\begin{itemize}
\item $\bN = \{0, 1, \dots\}$ denotes the set of non-negative integers.
\item For $n \in \bN$, define $[n] = \{1, \dots, n\}$ with the convention $[0] = \emptyset$.
\item $\widehat{\Sym}$ is the completion of the symmetric algebra, so $\SYM(V) = \prod_{d \ge 0} \Sym^d (V)$.
\item For a partition $\lambda$, we let $\bM_\lambda$ denote the Specht module, which is a representation of the symmetric group $S_n$ with $n=|\lambda|$ (and is irreducible in characteristic $0$). See \cite[Chapter 4]{kerber} for details; there it is denoted $[\lambda]$.
\end{itemize}

\subsection*{Acknowledgements}

We thank Jeremy Miller for explaining the construction in \S\ref{ss:homotopy-conf}. We also thank Aur\'elien Djament, Nathalie Wahl, and a referee for helpful comments.

\section{Background} 
\label{sec:background}

\subsection{Representations of categories}
\label{ss:rep}

In this section, we recall the main combinatorial preliminaries that we need from \cite{catgb}. We also prove some additional results.

Let $\cC$ be an essentially small category. We denote by $\vert \cC \vert$ the set of isomorphism classes in $\cC$. We say that $\cC$ is {\bf directed} if every self-map in $\cC$ is the identity. If $\cC$ is directed, then $\vert \cC \vert$ is naturally a poset by defining $x \le y$ if there exists a morphism $x \to y$. 

Fix a nonzero ring $\bk$ (not necessarily commutative) and let $\Mod_{\bk}$ denote the category of left $\bk$-modules. A {\bf representation} of $\cC$ over $\bk$ (or a {\bf $\bk[\cC]$-module}) is a functor $\cC \to \Mod_{\bk}$. A map of $\cC$-modules is a natural transformation. We write $\Rep_{\bk}(\cC)$ for the category of representations, which is abelian. 

Let $x$ be an object of $\cC$. Define a representation $P_x$ of $\cC$ by $P_x(y)=\bk[\Hom(x,y)]$, i.e., $P_x(y)$ is the free left $\bk$-module with basis $\Hom(x,y)$. If $M$ is another representation then $\Hom(P_x, M)=M(x)$. This shows that $\Hom(P_x,-)$ is an exact functor, and so $P_x$ is a projective object of $\Rep_{\bk}(\cC)$. We call it the {\bf principal projective} at $x$. A $\cC$-module is {\bf finitely generated} if it is a quotient of a finite direct sum of principal projective objects. 

An object of $\Rep_{\bk}(\cC)$ is {\bf noetherian} if every ascending chain of subobjects stabilizes; this is equivalent to every subrepresentation being finitely generated. The category $\Rep_{\bk}(\cC)$ is {\bf noetherian} if every finitely generated object in it is.

Let $\Phi \colon \cC' \to \cC$ be a functor. This gives a pullback functor on representations 
\[
\Phi^* \colon \Rep_{\bk}(\cC) \to \Rep_{\bk}(\cC')
\]
defined by $\Phi^* M = M \circ \Phi$. We are interested in how $\Phi^*$ interacts with finiteness properties of representations. The following definition (which is \cite[Definition~3.2.1]{catgb}) is of central importance to this.

\begin{definition} \label{def:propF}
We say that $\Phi$ satisfies {\bf property (F)} if the following condition holds: given any object $x$ of $\cC$ there exist finitely many objects $y_1, \ldots, y_n$ of $\cC'$ and morphisms $f_i \colon x \to \Phi(y_i)$ in $\cC$ such that for any object $y$ of $\cC'$ and any morphism $f \colon x \to \Phi(y)$ in $\cC$, there exists a morphism $g \colon y_i \to y$ in $\cC'$ such that $f=\Phi(g) \circ f_i$.
\end{definition}

The following result shows why property~(F) is so useful. The next three statements are \cite[Propositions 3.2.3, 3.2.4, Corollary 3.2.5]{catgb}:

\begin{proposition} \label{propFfg}
The functor $\Phi$ satisfies property~{\rm (F)} if and only if $\Phi^*$ takes finitely generated objects of $\Rep_{\bk}(\cC)$ to finitely generated objects of $\Rep_{\bk}(\cC')$.
\end{proposition}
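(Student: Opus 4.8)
The plan is to prove both implications by directly manipulating generators, using the adjunction $\Hom(P_x, M) = M(x)$ and the description of finite generation in terms of principal projectives. The main point to keep in mind is that $\Phi^*$ commutes with arbitrary direct sums and colimits (it is defined by precomposition), but it need \emph{not} send principal projectives to principal projectives; the content of property~(F) is precisely that it sends each $\Phi^* P_x$ to a finitely generated $\cC'$-module, and everything else follows formally.

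First I would handle the easy direction ((F) $\Rightarrow$ preserves finite generation). Since every finitely generated $\cC$-module $M$ is a quotient of a finite sum $\bigoplus_i P_{x_i}$, and $\Phi^*$ is exact (it is computed objectwise) hence preserves surjections, it suffices to show $\Phi^* P_x$ is finitely generated in $\Rep_{\bk}(\cC')$ for each object $x$ of $\cC$. Unwinding, $(\Phi^* P_x)(y) = \bk[\Hom_{\cC}(x, \Phi(y))]$. Apply property~(F) to $x$: it yields objects $y_1, \dots, y_n$ of $\cC'$ and morphisms $f_i \colon x \to \Phi(y_i)$. Each $f_i \in (\Phi^* P_x)(y_i) = \Hom(P_{y_i}, \Phi^* P_x)$, so together they define a map $\bigoplus_{i=1}^n P_{y_i} \to \Phi^* P_x$. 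The defining property of the $f_i$ says exactly that every basis element $f \in \Hom_{\cC}(x, \Phi(y))$ is of the form $\Phi(g) \circ f_i = (\Phi^* P_x)(g)(f_i)$ for some $g \colon y_i \to y$; hence this map is surjective, and $\Phi^* P_x$ is finitely generated.

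For the converse, I would argue contrapositively: assume (F) fails at some object $x$ and produce a finitely generated $\cC$-module whose pullback is not finitely generated. The natural candidate is $P_x$ itself. If $\Phi^* P_x$ were finitely generated, it would be a quotient of some $\bigoplus_{i=1}^n P_{y_i}$, and the image of the $i$th standard generator is an element $f_i \in (\Phi^* P_x)(y_i) = \Hom_{\cC}(x, \Phi(y_i))$. Surjectivity at an object $y$ means every $f \in \Hom_{\cC}(x, \Phi(y))$ lies in the $\bk$-span of $\{(\Phi^* P_x)(g)(f_i) : i, \ g \in \Hom_{\cC'}(y_i, y)\} = \{\Phi(g)\circ f_i\}$; but these spanning elements are themselves basis vectors of the free module $\bk[\Hom_{\cC}(x,\Phi(y))]$, so $f$ being in their span forces $f = \Phi(g) \circ f_i$ for some $i$ and some $g$. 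This is precisely property~(F) for $x$ with witnesses $y_1,\dots,y_n$ and $f_1,\dots,f_n$, contradicting our assumption.

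I do not expect a serious obstacle here; the one technical nicety worth care is the last step, where one passes from ``$f$ lies in the $\bk$-linear span of certain \emph{distinct} basis vectors'' to ``$f$ equals one of them'' — this uses that $\bk$ is nonzero and that $\Hom_{\cC}(x, \Phi(y))$ is an honest basis, so the coordinate of $f$ is $1$ and must be accounted for by a single basis vector $\Phi(g)\circ f_i$ equal to $f$. Everything else is a formal consequence of exactness of $\Phi^*$, its compatibility with direct sums, and the Yoneda-type identity $\Hom(P_z, N) = N(z)$.
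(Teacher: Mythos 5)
Your argument is correct and is essentially the standard proof of this statement (the paper itself only cites \cite[Proposition 3.2.3]{catgb} rather than reproving it): the forward direction reduces to showing each $\Phi^*P_x$ is finitely generated, and the converse extracts witnesses for property~(F) from a finite generating set of $\Phi^*P_x$. The only imprecision is in the converse, where you write that the image of the $i$th generator lies in $\Hom_{\cC}(x,\Phi(y_i))$ rather than in the free module $\bk[\Hom_{\cC}(x,\Phi(y_i))]$; in general each $f_i$ is a finite $\bk$-linear combination of morphisms $h_{ij}$, and one should enlarge the generating set to these $h_{ij}$ (which only increases the image, preserving surjectivity) before running your basis-vector coordinate argument.
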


Recall that $\Phi$ is essentially surjective if every object of $\cC$ is isomorphic to $\Phi(x)$ for some object $x \in \cC'$.

\begin{proposition} \label{pullback-fg}
Suppose that $\Phi$ is essentially surjective. Let $M$ be an object of $\Rep_{\bk}(\cC)$ such that $\Phi^*(M)$ is finitely generated {\rm (}resp.\ noetherian{\rm )}. Then $M$ is finitely generated {\rm (}resp.\ noetherian{\rm )}.
\end{proposition}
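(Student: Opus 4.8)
The plan is to handle both cases by the same mechanism: a subrepresentation of $M$ is controlled by its restriction along $\Phi$, because $\Phi$ meets every isomorphism class of $\cC$. Throughout I will use that $\Phi^*$ is computed objectwise, hence is exact and carries subobjects to subobjects.

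For the finitely generated case, first I would choose finitely many elements $e_j \in \Phi^*(M)(y_j) = M(\Phi(y_j))$, for $j = 1, \ldots, n$, generating $\Phi^*(M)$. Let $M' \subseteq M$ be the subrepresentation of $M$ generated by the $e_j$, viewed as elements of $M$ at the objects $\Phi(y_j)$; concretely $M'(x)$ is the $\bk$-span of all $M(f)(e_j)$ with $f \in \Hom_\cC(\Phi(y_j), x)$. I claim $M' = M$. Since $\Phi(g)$ is a morphism of $\cC$ for every morphism $g$ of $\cC'$, for each $y \in \cC'$ we get
\[
M'(\Phi(y)) \supseteq \sum_{j,\, g} \bk \cdot M(\Phi(g))(e_j) = \sum_{j,\, g} \bk \cdot \Phi^*(M)(g)(e_j) = \Phi^*(M)(y) = M(\Phi(y)),
\]
the last equality because the $e_j$ generate $\Phi^*(M)$. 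Thus $M'(\Phi(y)) = M(\Phi(y))$ for all $y$. Given an arbitrary object $x$ of $\cC$, essential surjectivity furnishes an isomorphism $\alpha \colon \Phi(y) \to x$, and $M(\alpha)$ carries $M'(\Phi(y))$ onto $M'(x)$ (as $M'$ is a subrepresentation), so $M'(x) = M(x)$. Hence $M' = M$, and since $M'$ is generated by $n$ elements, $M$ is a quotient of $\bigoplus_{j=1}^n P_{\Phi(y_j)}$, i.e.\ finitely generated.

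For the noetherian case, I would take an ascending chain $M_1 \subseteq M_2 \subseteq \cdots$ of subrepresentations of $M$ and apply $\Phi^*$, obtaining an ascending chain $\Phi^*(M_1) \subseteq \Phi^*(M_2) \subseteq \cdots$ of subrepresentations of $\Phi^*(M)$. Noetherianity of $\Phi^*(M)$ makes it stabilize: $\Phi^*(M_m) = \Phi^*(M_n)$ for all $m \ge n$. Unwinding, $M_m(\Phi(y)) = M_n(\Phi(y))$ as subspaces of $M(\Phi(y))$ for every $y \in \cC'$, and then, exactly as before, essential surjectivity together with functoriality under isomorphisms of $\cC$ upgrades this to $M_m(x) = M_n(x)$ for every object $x$, so $M_m = M_n$. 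Hence the original chain stabilizes and $M$ is noetherian.

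The argument is largely formal; the one point needing care — and the only real obstacle — is the passage from objects in the image of $\Phi$ to arbitrary objects of $\cC$. One must check that an equality of subobjects at $\Phi(y)$ is transported along an isomorphism $\Phi(y) \cong x$; this works precisely because subrepresentations are stable under all morphisms (isomorphisms included), and because $\Phi^*(M_m) = \Phi^*(M_n)$ is a genuine equality of subobjects of $\Phi^*(M)$, not merely an abstract isomorphism.
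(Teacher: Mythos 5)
Your proof is correct, and it follows the standard argument: the paper itself only cites \cite[Proposition 3.2.4]{catgb} for this statement, and the proof there proceeds exactly as you do, pulling generators back along $\Phi$ and using essential surjectivity to transport equalities of subobjects from objects $\Phi(y)$ to arbitrary objects of $\cC$. The one point that needed care — that a subrepresentation's value at $x$ is determined by its value at any isomorphic object, so that $\Phi^*(N_1)=\Phi^*(N_2)$ as subobjects forces $N_1=N_2$ — is handled correctly.
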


\begin{proposition} \label{pullback-noeth}
Suppose that $\Rep_{\bk}(\cC')$ is noetherian and $\Phi$ is essentially surjective and satisfies property~{\rm (F)}. Then $\Rep_{\bk}(\cC)$ is noetherian.
\end{proposition}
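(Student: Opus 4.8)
The plan is to deduce this directly from the two preceding propositions, with essentially no new work. Let $M$ be a finitely generated object of $\Rep_{\bk}(\cC)$; we must show that $M$ is noetherian, i.e., that every ascending chain of subobjects of $M$ stabilizes.

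First I would use property~(F). By Proposition~\ref{propFfg}, $\Phi^*$ carries finitely generated objects to finitely generated objects, so $\Phi^*(M)$ is a finitely generated object of $\Rep_{\bk}(\cC')$. Since $\Rep_{\bk}(\cC')$ is noetherian by hypothesis, $\Phi^*(M)$ is a noetherian object. Now I would invoke Proposition~\ref{pullback-fg} in its ``noetherian'' form: $\Phi$ is essentially surjective and $\Phi^*(M)$ is noetherian, hence $M$ is noetherian. As $M$ was an arbitrary finitely generated object, $\Rep_{\bk}(\cC)$ is noetherian.

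If one prefers not to cite Proposition~\ref{pullback-fg} and instead argue the last step by hand, the only point where essential surjectivity enters is that $\Phi^*$ reflects containments among subobjects of a fixed object: if $A \subseteq B \subseteq M$ with $\Phi^*(A) = \Phi^*(B)$ as subobjects of $\Phi^*(M)$, then $A = B$. Indeed, for any object $x$ of $\cC$ pick an object $y$ of $\cC'$ and an isomorphism $x \cong \Phi(y)$; applying $M$ to this isomorphism identifies $A(x), B(x) \subseteq M(x)$ with $\Phi^*(A)(y), \Phi^*(B)(y) \subseteq \Phi^*(M)(y)$, and the latter two coincide. Hence $A(x) = B(x)$ for all $x$, i.e., $A = B$. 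Given this, an ascending chain $M_0 \subseteq M_1 \subseteq \cdots$ of subobjects of $M$ pushes forward under the exact functor $\Phi^*$ to an ascending chain of subobjects of the noetherian object $\Phi^*(M)$, which stabilizes; by the reflection property the original chain stabilizes as well.

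I do not expect a genuine obstacle here: the substantive content has already been isolated in Propositions~\ref{propFfg} and~\ref{pullback-fg}, and this statement is essentially their composition. The only thing worth being slightly careful about is the bookkeeping in the reflection step, namely that the isomorphisms $M(x) \cong M(\Phi(y))$ are compatible with the inclusions of $A$ and $B$ into $M$, which is automatic from functoriality.
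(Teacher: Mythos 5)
Your proposal is correct and is essentially the paper's own argument: the statement is quoted from \cite{catgb} as Corollary~3.2.5 there, which is deduced exactly as you do by composing Proposition~\ref{propFfg} (property~(F) gives $\Phi^*(M)$ finitely generated, hence noetherian in $\Rep_{\bk}(\cC')$) with the noetherian case of Proposition~\ref{pullback-fg}. Your optional hands-on reflection argument is also a correct unwinding of that last step.
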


\begin{proposition} \label{propFcomp}
Consider functors $\Phi \colon \cC_1 \to \cC_2$ and $\Psi \colon \cC_2 \to \cC_3$. 
\begin{enumerate}[\indent \rm (a)]
\item If $\Phi, \Psi$ satisfy property~{\rm (F)}, then the composition $\Psi \circ \Phi$ satisfies property~{\rm (F)}.
\item If $\Phi$ is essentially surjective and $\Psi \circ \Phi$ satisfies property~{\rm (F)}, then $\Psi$ satisfies property~{\rm (F)}.
\end{enumerate}
\end{proposition}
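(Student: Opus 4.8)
The plan is to prove both parts directly from Definition~\ref{def:propF}, by chasing the witnessing data through the relevant factorizations; no tool beyond the definition itself is needed.

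For part~(a), I would fix an object $x$ of $\cC_3$. First apply property~(F) for $\Psi$ at $x$ to obtain objects $z_1, \dots, z_m$ of $\cC_2$ and morphisms $h_j \colon x \to \Psi(z_j)$ with the universal factorization property. Then, for each $j$, apply property~(F) for $\Phi$ at the object $z_j$ of $\cC_2$ to obtain objects $w_{j,1}, \dots, w_{j,n_j}$ of $\cC_1$ and morphisms $e_{j,l} \colon z_j \to \Phi(w_{j,l})$ with the universal property. The claim is that the finite family of objects $w_{j,l}$, together with the composites $\Psi(e_{j,l}) \circ h_j \colon x \to \Psi(\Phi(w_{j,l})) = (\Psi \circ \Phi)(w_{j,l})$, witnesses property~(F) for $\Psi \circ \Phi$ at $x$. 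To verify this, given an object $w$ of $\cC_1$ and a morphism $f \colon x \to (\Psi \circ \Phi)(w)$, I would first factor $f$ through some $h_j$ via a morphism $k \colon z_j \to \Phi(w)$ in $\cC_2$ (using property~(F) for $\Psi$), then factor $k$ through some $e_{j,l}$ via a morphism $p \colon w_{j,l} \to w$ in $\cC_1$ (using property~(F) for $\Phi$), and finally apply $\Psi$ and compose to get $f = \Psi(\Phi(p)) \circ \Psi(e_{j,l}) \circ h_j = (\Psi \circ \Phi)(p) \circ (\Psi(e_{j,l}) \circ h_j)$, as required.

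For part~(b), I would fix an object $x$ of $\cC_3$ and apply property~(F) for $\Psi \circ \Phi$ at $x$ to obtain objects $w_1, \dots, w_n$ of $\cC_1$ and morphisms $f_i \colon x \to (\Psi \circ \Phi)(w_i) = \Psi(\Phi(w_i))$ with the universal property. I then propose the objects $\Phi(w_1), \dots, \Phi(w_n)$ of $\cC_2$, with the same morphisms $f_i$, as witnesses for $\Psi$ at $x$. Given an object $z$ of $\cC_2$ and a morphism $h \colon x \to \Psi(z)$, essential surjectivity of $\Phi$ lets me choose an object $w$ of $\cC_1$ and an isomorphism $\alpha \colon \Phi(w) \to z$ in $\cC_2$. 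Since $\Psi$ is a functor it sends $\alpha$ to an isomorphism, so $\Psi(\alpha)^{-1} \circ h \colon x \to \Psi(\Phi(w))$ is defined; factoring it via $\Psi \circ \Phi$ gives $\Psi(\alpha)^{-1} \circ h = \Psi(\Phi(g)) \circ f_i$ for some $i$ and some $g \colon w_i \to w$ in $\cC_1$. Rearranging, $h = \Psi(\alpha \circ \Phi(g)) \circ f_i$, and $\alpha \circ \Phi(g) \colon \Phi(w_i) \to z$ is a morphism of $\cC_2$, which is exactly the factorization needed.

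I expect the only subtle point to be in part~(b): one must remember that the witness objects for $\Psi$ are the images $\Phi(w_i)$ in $\cC_2$ rather than the $w_i$ themselves, and one must carefully transport the test morphism $h$ along $\Psi$ of the chosen isomorphism, invoke the hypothesis on $\Psi \circ \Phi$, and transport back, keeping track that the resulting intertwining morphism $\alpha \circ \Phi(g)$ lives in $\cC_2$. Part~(a) is a routine two-stage unwinding of the definition with no real obstacle.
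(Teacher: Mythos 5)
Your proof is correct. Both parts are verified exactly as you describe: in (a) the two-stage factorization through the witnesses for $\Psi$ and then for $\Phi$ at each $z_j$ works, and in (b) transporting the test morphism along $\Psi(\alpha)^{-1}$, factoring via the hypothesis on $\Psi \circ \Phi$, and transporting back gives the required factorization through $\Phi(w_i)$. However, your route differs from the paper's. The paper proves Proposition~\ref{propFcomp} in one line by citing the three preceding propositions: by Proposition~\ref{propFfg}, property~(F) for a functor is \emph{equivalent} to its pullback preserving finitely generated representations, so (a) follows from $(\Psi \circ \Phi)^* = \Phi^* \circ \Psi^*$, and (b) follows because if $\Phi^*(\Psi^*(M))$ is finitely generated and $\Phi$ is essentially surjective, then $\Psi^*(M)$ is finitely generated by Proposition~\ref{pullback-fg}. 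The paper's argument is shorter but routes a purely combinatorial statement through representation categories; yours is longer but entirely self-contained, stays at the level of Definition~\ref{def:propF}, and produces the witnessing objects and morphisms explicitly, which is the kind of data one would want if implementing these factorizations algorithmically. Either proof is acceptable.
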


\begin{proof}
This follows immediately from the previous three propositions.
\end{proof}

A {\bf norm} on $\cC$ is a function $\nu \colon \vert \cC \vert \to \bN^I$, where $I$ is a finite set. A {\bf normed category} is a category equipped with a norm. Fix a category $\cC$ with a norm $\nu$ with values in $\bN^I$; let $\{t_i\}_{i \in I}$ be indeterminates. Let $M$ be a representation of $\cC$ over a field $\bk$. We define the {\bf Hilbert series} of $M$ as
\begin{displaymath}
\rH_{M, \nu}(\bt) = \sum_{x \in \vert \cC \vert} \dim_{\bk}{M(x)} \cdot \bt^{\nu(x)},
\end{displaymath}
when this makes sense. We omit the norm $\nu$ from the notation when possible.

\subsection{Gr\"obner bases} \label{ss:grobner-bases}

A poset $X$ is {\bf noetherian} if for every sequence $x_1, x_2, \dots$ of elements in $X$, there exists $i < j$ such that $x_i \le x_j$. See \cite[\S 2]{catgb} for basic facts.

For an object $x$, let $S_x \colon \cC \to \Set$ be the functor given by $S_x(y)=\Hom(x, y)$. Note that $P_x=\bk[S_x]$. An {\bf ordering} on $S_x$ is a choice of well-order on $S_x(y)$, for each $y \in \cC$, such that for every morphism $y \to z$ in $\cC$ the induced map $S_x(y) \to S_x(z)$ is strictly order-preserving. We write $\preceq$ for an ordering; $S_x$ is {\bf orderable} if it admits an ordering.

Set 
\[
\wt{S}_x = \coprod_{y \in \cC} S_x(y).
\]
Given $f \in S_x(y)$ and $g \in S_x(z)$, define $f \le g$ if there exists $h \colon y \to z$ with $h_*(f)=g$. Define an equivalence relation on $\wt{S}_x$ by $f \sim g$ if $f \le g$ and $g \le f$. The poset $\vert S_x \vert$ is the quotient of $\wt{S}_x$ by $\sim$, with the induced partial order.

\begin{definition} \label{def:grobner-cat}
We say that $\cC$ is {\bf Gr\"obner} if, for all objects $x$, the functor $S_x$ is orderable and the poset $\vert S_x \vert$ is noetherian. We say that $\cC$ is {\bf quasi-Gr\"obner} if there exists a Gr\"obner category $\cC'$ and an essentially surjective functor $\cC' \to \cC$ satisfying property~(F).
\end{definition}

The following statement is \cite[Theorem 4.3.2]{catgb}:

\begin{theorem} \label{grobnoeth}
Let $\cC$ be a quasi-Gr\"obner category. Then for any left-noetherian ring $\bk$, the category $\Rep_{\bk}(\cC)$ is noetherian.
\end{theorem}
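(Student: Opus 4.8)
The plan is to reduce to the case that $\cC$ is Gr\"obner and then to prove that each principal projective $P_x=\bk[S_x]$ is a noetherian object, by a Gr\"obner-basis argument in the spirit of the Hilbert basis theorem. For the first reduction: if $\cC$ is quasi-Gr\"obner, choose a Gr\"obner category $\cC'$ and an essentially surjective functor $\Phi\colon\cC'\to\cC$ satisfying property~(F); by Proposition~\ref{pullback-noeth} it is enough to show $\Rep_\bk(\cC')$ is noetherian, so we may assume $\cC$ is itself Gr\"obner. For the second reduction: the noetherian objects of an abelian category are closed under subobjects, quotients, and finite direct sums, and every finitely generated $\cC$-module is a quotient of a finite direct sum of principal projectives; so it suffices to show that $P_x=\bk[S_x]$ is a noetherian object for each object $x$.

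So fix $x$, write $S=S_x$, and fix an ordering $\preceq$ on $S$ (possible since $\cC$ is Gr\"obner): each $S(y)$ is then well-ordered and each transition map $S(y)\to S(z)$ is strictly increasing. For $0\ne f\in\bk[S(y)]$ let $\init(f)\in S(y)$ be the largest basis element occurring in $f$ and $\mathrm{lc}(f)\in\bk$ its coefficient. To a submodule $M\subseteq\bk[S]$ we attach its \emph{initial data}: for $y\in\cC$ and $m\in S(y)$, set
\[
  \mathrm{c}_M(y,m) = \{0\}\cup\{\, \mathrm{lc}(f) : 0\ne f\in M(y),\ \init(f)=m \,\}\subseteq\bk.
\]
A short computation shows $\mathrm{c}_M(y,m)$ is a left ideal of $\bk$ (whose nonzero part is exactly the set of realized leading coefficients), and, because the transition maps are \emph{strictly} $\preceq$-increasing (so that $\init$ and $\mathrm{lc}$ commute with pushforward), $\mathrm{c}_M(y,m)\subseteq\mathrm{c}_M(z,h_*(m))$ for every $h\colon y\to z$. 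Thus $\mathrm{c}_M$ descends to an order-preserving function from the poset $\lvert S\rvert$ to the poset of left ideals of $\bk$.

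The proof is then completed by two lemmas. First, a division lemma: if $M\subseteq M'$ are submodules of $\bk[S]$ with $\mathrm{c}_M=\mathrm{c}_{M'}$, then $M=M'$. This is proved by well-founded induction on $\init$ along the well-order of $S(y)$: given $0\ne f\in M'(y)$, we have $\mathrm{lc}(f)\in\mathrm{c}_{M'}(y,\init f)=\mathrm{c}_M(y,\init f)$, so some $g\in M(y)$ has $\init(g)=\init(f)$ and $\mathrm{lc}(g)=\mathrm{lc}(f)$; then $f-g\in M'(y)$ is either $0$ or has strictly smaller $\init$, hence lies in $M(y)$ by induction, so $f\in M(y)$ (no commutativity of $\bk$ is used). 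Second, an ascending-chain lemma: since $\lvert S\rvert$ is a noetherian poset and $\bk$ is left-noetherian, every pointwise-ascending chain of order-preserving functions $\lvert S\rvert\to\{\text{left ideals of }\bk\}$ is eventually constant. Granting both, an ascending chain $M_1\subseteq M_2\subseteq\cdots$ in $\bk[S]$ produces an ascending chain of initial data, which stabilizes by the second lemma, whence the $M_i$ stabilize by the first; so $\bk[S]$ is noetherian, as needed.

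The substantive step is the ascending-chain lemma. When $\bk$ is a field the initial data reduces to an up-set of $\lvert S\rvert$ — the set of monomials occurring as leading terms — and the lemma becomes the familiar statement that a noetherian poset satisfies the ascending chain condition on up-sets (equivalently, that $S$ is a noetherian $\cC$-set). For a general left-noetherian $\bk$ one must carry the leading-coefficient left ideals along, and the lemma is then essentially the theorem itself for the poset category $\lvert S\rvert$ with coefficients; this is where I expect the main difficulty, to be handled by combining the finite-basis property of a noetherian poset with the noetherianity of $\bk$.
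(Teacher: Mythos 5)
The paper does not actually prove this theorem internally --- it is quoted from \cite[Theorem 4.3.2]{catgb} --- so there is no in-paper argument to compare against; your proposal reconstructs the standard strategy. The two reductions (to the Gr\"obner case via Proposition~\ref{pullback-noeth}, and to principal projectives), the definition of the leading-coefficient data $\mathrm{c}_M(y,m)$, the verification that it is a left ideal compatible with pushforward (this is exactly where strictness of the order-preservation is needed, as you note), and the division lemma by transfinite induction along the well-order of $S_x(y)$ are all correct.

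The one real gap is that you state the ascending-chain lemma --- that pointwise-ascending chains of order-preserving functions from the noetherian poset $\lvert S_x\rvert$ to the left ideals of $\bk$ stabilize --- but do not prove it, and as you say yourself this is the substantive step; without it the argument is incomplete at its crux. The lemma is true, and the argument you gesture at does work. First show that every order-preserving, ideal-valued $F$ on $\lvert S_x\rvert$ is \emph{finitely generated}: there are finitely many pairs $(m_i,a_i)$ with $a_i\in F(m_i)$ such that $F(m)=\sum_{m_i\le m}\bk a_i$ for all $m$. If not, recursively choose pairs $(m_n,a_n)$ with $a_n\in F(m_n)$ not lying in $\sum_{i<n,\ m_i\le m_n}\bk a_i$. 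Since $\lvert S_x\rvert$ is noetherian, $(m_n)$ has an infinite weakly increasing subsequence $m_{n_1}\le m_{n_2}\le\cdots$ (the standard strengthening of the good-pair condition); for $j<k$ the pair $(m_{n_j},a_{n_j})$ occurs among those excluded when $a_{n_k}$ was chosen, so $a_{n_k}\notin\sum_{j<k}\bk a_{n_j}$, and $\bk a_{n_1}\subsetneq\bk a_{n_1}+\bk a_{n_2}\subsetneq\cdots$ violates left-noetherianity of $\bk$. Given finite generation, the ACC follows: the pointwise union $F$ of an ascending chain $f_1\le f_2\le\cdots$ is again order-preserving and ideal-valued, hence generated by finitely many pairs $(m_i,a_i)$; choosing $N$ with $a_i\in f_N(m_i)$ for all $i$ gives $f_n=F$ for all $n\ge N$. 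Inserting this argument closes the gap and your proof is then complete.
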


The following results follow easily from the definitions.

\begin{proposition} \label{potgrob}
Suppose that $\Phi \colon \cC' \to \cC$ is an essentially surjective functor satisfying property~{\rm (F)} and $\cC'$ is quasi-Gr\"obner. Then $\cC$ is quasi-Gr\"obner.
\end{proposition}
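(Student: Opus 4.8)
The plan is to unwind the definition of quasi-Gr\"obner for $\cC'$ and then compose the witnessing functor with $\Phi$. Since $\cC'$ is quasi-Gr\"obner, by Definition~\ref{def:grobner-cat} there is a Gr\"obner category $\cC''$ together with an essentially surjective functor $\Psi \colon \cC'' \to \cC'$ satisfying property~(F). The natural candidate to witness that $\cC$ is quasi-Gr\"obner is the pair $(\cC'', \Phi \circ \Psi)$, so the whole argument reduces to checking that $\Phi \circ \Psi \colon \cC'' \to \cC$ is essentially surjective and satisfies property~(F).

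First I would observe that essential surjectivity is immediate: given an object $x$ of $\cC$, essential surjectivity of $\Phi$ gives an object $x'$ of $\cC'$ with $\Phi(x') \cong x$, and essential surjectivity of $\Psi$ gives an object $x''$ of $\cC''$ with $\Psi(x'') \cong x'$; hence $(\Phi \circ \Psi)(x'') \cong x$. Next I would invoke Proposition~\ref{propFcomp}(a): since both $\Psi$ and $\Phi$ satisfy property~(F), so does their composition $\Phi \circ \Psi$. (Alternatively, one can argue directly from Proposition~\ref{propFfg}: $\Psi^*$ and $\Phi^*$ each preserve finite generation of representations, hence so does $(\Phi \circ \Psi)^* = \Psi^* \circ \Phi^*$, which by the same proposition means $\Phi \circ \Psi$ satisfies property~(F).)

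With these two facts in hand, $(\cC'', \Phi \circ \Psi)$ exhibits $\cC$ as a quotient of the Gr\"obner category $\cC''$ in the sense required by Definition~\ref{def:grobner-cat}, so $\cC$ is quasi-Gr\"obner.

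I do not anticipate a genuine obstacle here: the statement is essentially a transitivity property, and the only nontrivial input is the closure of property~(F) under composition, which is already recorded as Proposition~\ref{propFcomp}(a) (itself a formal consequence of Propositions~\ref{propFfg}--\ref{pullback-noeth}). The one point worth being careful about is not conflating ``Gr\"obner'' with ``quasi-Gr\"obner'': we must start from the Gr\"obner category provided by the quasi-Gr\"obner hypothesis on $\cC'$ and keep that same category $\cC''$ throughout, changing only the functor down to $\cC$.
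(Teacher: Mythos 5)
Your proof is correct and is exactly the argument the paper has in mind: the paper omits the proof, remarking only that the result "follows easily from the definitions," and your composition of the witnessing functor $\Psi$ with $\Phi$, using Proposition~\ref{propFcomp}(a) for property~(F) and the obvious chaining for essential surjectivity, is that easy argument.
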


\begin{proposition} \label{grobprod}
The cartesian product of finitely many {\rm (}quasi-{\rm )}Gr\"obner categories is {\rm (}quasi-{\rm )}Gr\"obner.
\end{proposition}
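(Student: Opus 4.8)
The plan is to prove the Gr\"obner case directly and then deduce the quasi-Gr\"obner case from Proposition~\ref{potgrob}. In both cases an evident induction on the number of factors reduces us to a product of two categories, so I will work with $\cC = \cC_1 \times \cC_2$.

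First suppose $\cC_1$ and $\cC_2$ are Gr\"obner and fix an object $x = (x_1, x_2)$ of $\cC$. Since $\Hom_\cC((x_1,x_2),(y_1,y_2)) = \Hom_{\cC_1}(x_1,y_1) \times \Hom_{\cC_2}(x_2,y_2)$, there is a natural identification $S_x \cong S_{x_1} \times S_{x_2}$ of functors to $\Set$, functorial in each factor separately. I would construct an ordering on $S_x$ by choosing orderings $\preceq_1$ on $S_{x_1}$ and $\preceq_2$ on $S_{x_2}$ and ordering each set $S_x(y_1,y_2) = S_{x_1}(y_1) \times S_{x_2}(y_2)$ lexicographically with the first coordinate most significant. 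One then checks the two required properties: the lexicographic product of two well-orders is a well-order, and if $(f_1,f_2) \prec (g_1,g_2)$ then for any morphism $(h_1,h_2)\colon (y_1,y_2) \to (z_1,z_2)$ the maps $(h_1)_*,(h_2)_*$ preserve the lexicographic comparison strictly (strict in the first coordinate when $f_1 \prec_1 g_1$, and in the second when $f_1 = g_1$ and $f_2 \prec_2 g_2$), using that each $(h_i)_*$ is strictly $\preceq_i$-preserving. So $S_x$ is orderable.

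Next I would identify the poset $\vert S_x \vert$. Because a morphism in $\cC$ is a pair of morphisms, for $(f_1,f_2),(g_1,g_2) \in \wt{S}_x$ one has $(f_1,f_2) \le (g_1,g_2)$ if and only if $f_1 \le g_1$ in $\wt{S}_{x_1}$ and $f_2 \le g_2$ in $\wt{S}_{x_2}$; hence $(f_1,f_2)\sim(g_1,g_2)$ iff $f_1\sim g_1$ and $f_2\sim g_2$, and the assignment $[(f_1,f_2)] \mapsto ([f_1],[f_2])$ is a well-defined bijection that is order-preserving and order-reflecting, i.e.\ an isomorphism of posets $\vert S_x \vert \cong \vert S_{x_1}\vert \times \vert S_{x_2}\vert$. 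Since the two factors are noetherian posets and a finite product of noetherian posets is noetherian \cite[\S 2]{catgb}, $\vert S_x\vert$ is noetherian. This shows $\cC_1 \times \cC_2$ is Gr\"obner.

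For the quasi-Gr\"obner case, choose Gr\"obner categories $\cC'_i$ with essentially surjective functors $\Phi_i \colon \cC'_i \to \cC_i$ satisfying property~(F). Then $\cC'_1 \times \cC'_2$ is Gr\"obner by the above, and $\Phi_1 \times \Phi_2 \colon \cC'_1 \times \cC'_2 \to \cC_1 \times \cC_2$ is obviously essentially surjective; it remains to verify property~(F). Given $(x_1,x_2)$, apply property~(F) for each $\Phi_i$ to obtain finitely many objects $y_{i,1},\dots,y_{i,n_i}$ of $\cC'_i$ and morphisms $f_{i,j}\colon x_i \to \Phi_i(y_{i,j})$ with the factorization property; then the finitely many objects $(y_{1,j},y_{2,k})$ with morphisms $(f_{1,j},f_{2,k})$ witness property~(F) for $\Phi_1 \times \Phi_2$, since any morphism $(x_1,x_2) \to (\Phi_1 \times \Phi_2)(y_1,y_2)$ is a pair $(h_1,h_2)$ and factoring each $h_i = \Phi_i(g_i)\circ f_{i,j_i}$ gives $(h_1,h_2) = (\Phi_1 \times \Phi_2)(g_1,g_2)\circ(f_{1,j_1},f_{2,j_2})$. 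Proposition~\ref{potgrob} then yields that $\cC_1 \times \cC_2$ is quasi-Gr\"obner. The only step requiring genuine (though routine) care is the verification that the lexicographic ordering on $S_x$ is simultaneously a well-order on each value and strictly compatible with the transition maps; everything else is formal bookkeeping about products of categories and of posets.
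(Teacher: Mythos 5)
Your proof is correct. The paper gives no argument here --- it simply asserts that the proposition ``follows easily from the definitions'' --- and your write-up supplies exactly the routine verifications being left to the reader: the identifications $S_{(x_1,x_2)} \cong S_{x_1} \times S_{x_2}$ and $\vert S_{(x_1,x_2)} \vert \cong \vert S_{x_1} \vert \times \vert S_{x_2} \vert$, the lexicographic ordering, noetherianity of a finite product of noetherian posets, and the stability of essential surjectivity and property~(F) under products.
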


\section{Categories of $G$-injections} \label{sec:fig}

\subsection{Finite groups}

In this section, we assume that $G$ is finite. 

Define a functor $\Phi \colon \FA \to \FA_G$ that sends a set to itself and a function $f \colon x \to y$ to $(f,1)$ where $1 \colon x \to G$ is the constant map sending every element to the identity of $G$. We also use $\Phi$ to denote the restriction $\Phi \colon \FI \to \FI_G$.

We have the following basic property about representations of $\FI_G$:

\begin{proposition} \label{prop:FIG-propertyF}
The functors $\Phi \colon \FA \to \FA_G$ and $\Phi \colon \FI \to \FI_G$  satisfy property~{\rm (F)}. In particular, $\FA_G$ and $\FI_G$ are quasi-Gr\"obner categories, and if $\bk$ is left-noetherian then $\Rep_\bk(\FA_G)$ and $\Rep_\bk(\FI_G)$ are noetherian.
\end{proposition}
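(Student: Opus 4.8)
The plan is to verify property~(F) directly from Definition~\ref{def:propF}, since everything else in the statement follows formally: once $\Phi$ is known to satisfy property~(F), and since $\Phi$ is clearly essentially surjective (it is the identity on objects), Proposition~\ref{potgrob} gives that $\FA_G$ and $\FI_G$ are quasi-Gr\"obner (using that $\FA$ and $\FI$ are Gr\"obner, which is recorded in \cite{catgb}), and then Theorem~\ref{grobnoeth} gives noetherianity of the representation categories over any left-noetherian $\bk$.

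So the heart of the matter is checking property~(F) for $\Phi\colon \FI \to \FI_G$ (the argument for $\FA \to \FA_G$ is the same, and is in fact slightly easier since one does not have to track injectivity). Fix an object $x$, i.e.\ a finite set; without loss of generality $x = [m]$. A morphism $[m] \to [n]$ in $\FI_G$ is a pair $(f,\rho)$ with $f\colon [m]\to[n]$ injective and $\rho\colon[m]\to G$ an arbitrary function. The key observation is that the group ``data'' $\rho$ has only finitely many possibilities: there are exactly $|G|^m$ functions $[m]\to G$. So I would take, as the finite list of test objects, $y_\rho = [m]$ for each $\rho \in G^{[m]}$, with the morphism $f_\rho = (\id_{[m]}, \rho)\colon [m] \to \Phi([m]) = [m]$. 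Given any $(f,\sigma)\colon [m] \to \Phi([n]) = [n]$, I need $g\colon [m] \to [n]$ \emph{in $\FI$} (so $g$ is an honest injection with trivial $G$-component after applying $\Phi$) with $(f,\sigma) = \Phi(g)\circ f_\sigma = (g,1)\circ(\id,\sigma) = (g\circ\id, \sigma)$; the composition law in $\FA_G$ shows this equals $(g,\sigma)$, so one simply takes $g = f$. This works because $\Phi(g)$ contributes the constant map to the identity, which is the identity for the multiplication used in composing the $G$-components, so composing with $\Phi(g)$ leaves the $\rho$-part untouched while allowing the underlying injection to be anything.

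The main (and only real) obstacle is essentially bookkeeping: getting the direction of the composition law right, since in $\FA_G$ the $G$-component of $(g,\sigma)\circ(f,\rho)$ is $x \mapsto \sigma(f(x))\rho(x)$, so one must be careful that it is $f_\rho$ that carries the $\rho$ and $\Phi(g)$ that carries the (trivial) second factor — this is why the test morphism $f_\rho$ must itself encode $\rho$ and why finiteness of $G$ is used exactly once, to make the list $\{y_\rho\}_{\rho \in G^{[m]}}$ finite. Finiteness of $G$ is genuinely needed here: if $G$ were infinite there would be infinitely many $\rho$'s and no finite test family could work in this naive way, which is consistent with Theorem~\ref{mainthm:fi1} requiring a subtler argument for general $G$. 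After property~(F) is established, I would assemble the conclusions: quasi-Gr\"obnerness of $\FA_G$ and $\FI_G$ via Proposition~\ref{potgrob}, and noetherianity over left-noetherian $\bk$ via Theorem~\ref{grobnoeth}, noting that the hypothesis ``$\cC'$ is quasi-Gr\"obner'' in Proposition~\ref{potgrob} is met because $\FA$ and $\FI$ are Gr\"obner.
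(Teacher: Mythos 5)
Your proof is correct and is essentially the paper's argument: the paper likewise takes as the finite test family all morphisms $(\id_x,\rho)$ with $\rho \in G^x$ (so $|G|^{\#x}$ of them, all with target $\Phi(x)$) and factors an arbitrary $(f,\sigma)$ as $\Phi(f)\circ(\id_x,\sigma)$, then deduces the remaining claims from Proposition~\ref{potgrob}. One minor correction: \cite{catgb} shows that $\FI$ and $\FA$ are \emph{quasi}-Gr\"obner, not Gr\"obner (they cannot be Gr\"obner, since their objects have nontrivial automorphisms and so the functors $S_x$ are not orderable); this does not affect your argument, because Proposition~\ref{potgrob} only requires $\cC'$ to be quasi-Gr\"obner.
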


\begin{proof}
To see that $\Phi$ satisfies property~(F), suppose $x$ has size $n$, set $y_1, \dots, y_{n^{|G|}}$ all equal to $x$ and let $f_1, \dots, f_{n^{|G|}}$ correspond to all automorphisms of $x$ in $\FA_G$ of the form $(1, g)$ under some enumeration. The categories $\FI$ and $\FA$ are quasi-Gr\"obner by \cite[Theorems 7.1.4, 7.3.4]{catgb}, so the last part follows from Proposition~\ref{potgrob}.
\end{proof}

Corollary~\ref{cor:polycyclic} improves the noetherian conclusion by allowing $G$ to be any polycyclic-by-finite group.

\begin{remark}
Recall from \cite[\S 7.1]{catgb} that $\FI_d$ is the category of finite sets where a morphism $S \to T$ is an injection $f \colon S \to T$ and a function $g \colon T \setminus f(S) \to \{1, \dots, d\}$ with composition defined in the evident way. Define a category $\FI_{d,G}$ of finite sets whose morphisms are pairs $(f,\sigma)$ where $f$ is a decorated injection as in the definition of $\FI_d$ and $\sigma$ is as in the definition of $\FI_G$. As above, there is a natural functor $\FI_d \to \FI_{d,G}$ satisfying property~(F). 
\end{remark}

The category $\Rep_{\bk}(\FI_G)$ only depends on $\Rep_{\bk}(G)$ as an abelian category equipped with the extra structure of the invariants functor $\Rep_{\bk}(G) \to \Mod_{\bk}$. Thus $\Rep_{\bk}(\FI_G)$ ``sees'' very little of $G$. We can sometimes be more explicit. Let $\FB$ be the groupoid of finite sets (maps are bijections).

\begin{proposition} \label{FIGrep}
Suppose that $\bk$ is a splitting field for $G$ in which $\# G$ is invertible. Then representations of $\FI_G$ are equivalent to representations of $\FI \times \FB^r$, where $r$ is the number of non-trivial irreducible representations of $G$ over $\bk$.
\end{proposition}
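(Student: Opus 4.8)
The plan is to use the hypothesis on $\bk$ to decompose the category $\Rep_{\bk}(G)$ and then transport that decomposition through the definition of $\FI_G$-modules. First I would recall that when $\bk$ is a splitting field for $G$ with $\# G$ invertible, the group algebra $\bk[G]$ is semisimple, so $\Rep_{\bk}(G) \simeq \prod_{i=0}^{r} \Mod_{\bk}$, where the factors are indexed by the irreducible representations of $G$ over $\bk$, with $i=0$ corresponding to the trivial representation. Under this equivalence, a representation $V$ of $G$ is sent to the tuple $(\Hom_G(L_i, V))_i$ of multiplicity spaces, and crucially the invariants functor $V \mapsto V^G$ corresponds to projection onto the $i=0$ factor (the multiplicity space of the trivial representation). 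This is the only structure on $\Rep_{\bk}(G)$ that $\Rep_{\bk}(\FI_G)$ remembers, as noted in the paragraph preceding the statement.

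Next I would reinterpret what an $\FI_G$-module is in terms of multiplicity spaces. An $\FI_G$-module $M$ assigns to $[n]$ a representation $M([n])$ of $S_n \wr G$, together with transition maps. Decomposing $M([n])$ under $G^n$ using the semisimple decomposition above gives, for each function $c \colon [n] \to \{0, 1, \dots, r\}$, a multiplicity space $M([n])_c$; the residual $S_n$-symmetry permutes these, so $M([n])_c$ depends only on the "type" of $c$, i.e. on how many elements of $[n]$ are assigned each label, and carries an action of the stabilizer $\prod_i S_{n_i}$. The $\FI_G$ transition maps — coming from $G$-maps $[m] \hookrightarrow [n]$ — then translate into: injections on the underlying $\{0,\dots,r\}$-labeled sets that must \emph{preserve labels except that they may turn a label $0$ element into any label}. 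Tracking this carefully, one sees that the label-$0$ part behaves like an $\FI$-module (elements can be freely added and have their "non-label" changed — this is exactly the $\FI$ structure, since $\Hom_{\FI_G}$-maps act on the trivial-isotypic part by the invariants functor and by adding new elements), while for each $i \ge 1$ the label-$i$ elements can only be permuted and not created or destroyed (there is no morphism in $\FI_G$ that produces a new $L_i$-isotypic element), which is exactly an $\FB$-module structure. Assembling these gives a functor $\Rep_{\bk}(\FI_G) \to \Rep_{\bk}(\FI \times \FB^r)$, and the semisimplicity of $\bk[G]$ makes it an equivalence, with inverse given by forming $\bigoplus_c (\text{the } c\text{-part}) \otimes (\bigotimes_{j} L_{c(j)})$ with the appropriate induced $S_n \wr G$-action.

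The cleanest way to carry this out rigorously, rather than by hand, is probably to exhibit the equivalence at the level of principal projectives and module categories: show $\Rep_{\bk}(\FI_G)$ and $\Rep_{\bk}(\FI \times \FB^r)$ are both equivalent to the category of modules over an explicit $\bk$-linearized category (a "$\bk$-algebroid"), and that the two algebroids are isomorphic once $\bk[G]$ is split semisimple. Concretely: the endomorphism algebra of $\bigoplus_{n \le N} P_{[n]}$ in $\Rep_{\bk}(\FI_G)$ can be computed from $\bk[\Hom_{\FI_G}([m],[n])] = \bk[\Hom_{\FI}([m],[n])] \otimes \bk[G]^{\otimes m}$ as a twisted/bimodule construction, and splitting each $\bk[G]$ tensor factor into its $r+1$ matrix blocks and using that only the trivial block interacts with the "new elements" part of an $\FI$-morphism, one recovers exactly the corresponding computation for $\FI \times \FB^r$.

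I expect the main obstacle to be bookkeeping rather than conceptual: precisely matching the combinatorics of $G$-maps (where the group element data $\rho\colon R \to G$ interacts multiplicatively with composition) against the claim that label-$i$ elements for $i\ge 1$ contribute only a bijection groupoid. The subtle point is verifying that \emph{no} $\FI_G$-morphism can change the isotypic label of an element or create an element carrying a nontrivial label — equivalently, that the functor $\FI \to \FI_G$ induces, on the $L_i$-isotypic multiplicity functor, exactly the forgetful/restriction to $\FB$ — and dually that the trivial-isotypic part is acted on by the full $\FI$ structure including the "forget where new elements go" maps. Once that compatibility is nailed down, semisimplicity of $\bk[G]$ (which is where the splitting-field and invertibility hypotheses enter, and is exactly what fails in general) upgrades the functor to an equivalence.
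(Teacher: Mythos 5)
Your proposal is correct and follows essentially the same route as the paper: decompose $M(S)$ into $G^S$-isotypic pieces, observe that every morphism $(f,\sigma)$ agrees with $(f,1)$ up to an automorphism of the target and that $(f,1)$ is fixed by post-composition with $(\id,\tau)$ for $\tau$ trivial on the image, so new elements land in the trivial-isotypic (invariant) part, yielding the $\FI\times\FB^r$ structure. The ``subtle point'' you flag is exactly this two-line invariance argument, so the bookkeeping you anticipate is indeed all that remains.
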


\begin{proof}
Let $V_1, \ldots, V_r$ be the non-trivial irreducible representations of $G$, and let $V_0$ be the trivial representation. Suppose that $M$ is an $\FI_G$-module. Since $\bk$ is a splitting field for $G$, $\bk[G]$ is isomorphic to a direct product of matrix algebras; since $\bk[G^S] \cong \bk[G]^{\otimes S}$, we see that $\bk$ is also a splitting field for all $G^S$. So we can decompose $M(S)$ into isotypic pieces for the action of $G^S$:
\begin{equation} \label{eq5}
M(S) = \bigoplus_{S=S_0 \amalg \cdots \amalg S_r} N(S_0, \ldots, S_r) \otimes (V_0^{\boxtimes S_0} \boxtimes \cdots \boxtimes V_r^{\boxtimes S_r} ),
\end{equation}
where $N$ is a multiplicity space. Suppose now that $f \colon S \to T$ is an injection. To build a morphism in $\FI_G$ we must also choose a function $\sigma \colon S \to G$. However, if $\sigma$ and $\sigma'$ are two choices then $(f,\sigma)$ and $(f,\sigma')$ differ by an element of $\Aut(T)$, namely an automorphism of the form $(\id_T, \tau)$ where $\tau$ restricts to $\sigma' \sigma^{-1}$ on $S$. Thus it suffices to record the action of $(f, 1)$. Note that if $\tau \colon T \to G$ restricts to 1 on $S$ then $(\id_T, \tau) (f,1)=(f,1)$. It follows that $(f,1)$ must map $M(S)$ into the $G^{T \setminus S}$-invariants of $M(T)$. In other words, under the above decomposition, $(f,1)$ induces a linear map
\begin{displaymath}
N(S_0, S_1, \ldots, S_r) \to N(f(S_0) \amalg (T \setminus f(S)), f(S_1), \ldots, f(S_r)).
\end{displaymath}
Thus, associated to $M$ we have built a representation $N$ of $\FI \times \FB^r$. The above discussion makes clear that no information is lost in passing from $M$ to $N$, and so this is a fully faithful construction. The inverse construction is defined by the formula \eqref{eq5}.
\end{proof}

\begin{remark}
By Proposition~\ref{FIGrep}, an $\FI_G$-module can be thought of as a sequence $(M_{\bn})_{\bn \in \bN^r}$, where each $M_{\bn}$ is an $\FI$-module equipped with an action of $S_{n_1} \times \cdots \times S_{n_r}$. There are no transition maps, so in a finitely generated $\FI_G$-module, all but finitely many of the $M_{\bn}$ are zero. Thus, at least in good characteristic, $\FI_G$-modules are not much different from $\FI$-modules, and essentially any result about $\FI$-modules (e.g., noetherianity) carries over to $\FI_G$-modules.

There are some similarities between Proposition~\ref{FIGrep} and the results of \cite[\S 6]{macdonald-wr}.
\end{remark}

We need one last thing for our application in the next section. Suppose that $\bk$ is commutative. Given $\FI_G$-modules $M$ and $N$, their {\bf pointwise tensor product}, denoted $M \odot N$ is the $\FI_G$-module given by $S \mapsto M(S) \otimes_\bk N(S)$ and $f \mapsto M(f) \otimes N(f)$ for morphisms $f$.

\begin{proposition} \label{prop:segre-FIG}
For any commutative ring $\bk$, the pointwise tensor product of two finitely generated $\FI_G$-modules is finitely generated.
\end{proposition}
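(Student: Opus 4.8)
The plan is to reduce to principal projectives and then prove a bounded‑generation statement, exactly mirroring the corresponding fact for ordinary $\FI$‑modules but keeping track of the $G$‑labels. First I would observe that $\odot$ is additive in each argument and, since $-\otimes_{\bk}-$ is right exact, a pair of surjections $M'\surj M$ and $N'\surj N$ of $\FI_G$‑modules induces a surjection $M'\odot N'\surj M\odot N$; moreover $(\bigoplus_i A_i)\odot(\bigoplus_j B_j)\cong\bigoplus_{i,j}(A_i\odot B_j)$. Hence, choosing surjections $\bigoplus_{i=1}^a P_{x_i}\surj M$ and $\bigoplus_{j=1}^b P_{y_j}\surj N$ exhibits $M\odot N$ as a quotient of the finite direct sum $\bigoplus_{i,j}(P_{x_i}\odot P_{y_j})$. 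Since a quotient of a finite direct sum of finitely generated modules is finitely generated, it suffices to show $P_{[m]}\odot P_{[n]}$ is finitely generated for all $m,n\ge 0$ (every finite set is isomorphic to some $[m]$, and $P_x$ depends only on the isomorphism class of $x$).

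Next I would unwind the definition: for a finite set $S$,
\[
(P_{[m]}\odot P_{[n]})(S) \;=\; \bk\big[\Hom_{\FI_G}([m],S)\times\Hom_{\FI_G}([n],S)\big],
\]
the free $\bk$‑module on pairs $\big((f,\rho),(g,\sigma)\big)$ where $f\colon[m]\hookrightarrow S$ and $g\colon[n]\hookrightarrow S$ are injections and $\rho\colon[m]\to G$, $\sigma\colon[n]\to G$ are arbitrary functions; because $G$ is finite this has finite rank for each $S$. The key step is then the claim that $P_{[m]}\odot P_{[n]}$ is generated by those basis elements for which $S$ has size at most $m+n$ and $f([m])\cup g([n])=S$. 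To see this, given an arbitrary basis element of $(P_{[m]}\odot P_{[n]})(S)$, set $T=f([m])\cup g([n])\subseteq S$, so $k:=|T|\le m+n$; choose a bijection $\iota\colon[k]\to T$ and let $h\colon[k]\to S$ be the $\FI_G$‑morphism whose underlying function is $[k]\xrightarrow{\iota}T\hookrightarrow S$ and whose $G$‑part is constant at the identity. Since the $G$‑part of $h$ is trivial, the composition law in $\FI_G$ gives $h_*\big((\iota^{-1}\circ f,\rho),(\iota^{-1}\circ g,\sigma)\big)=\big((f,\rho),(g,\sigma)\big)$, with $(\iota^{-1}\circ f,\rho)\in\Hom_{\FI_G}([m],[k])$ and $(\iota^{-1}\circ g,\sigma)\in\Hom_{\FI_G}([n],[k])$. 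Thus every basis element lies in the transition‑image of a basis element in degrees $\le m+n$ of the stated form.

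To conclude: there are only finitely many such elements (here finiteness of $G$ is used, since $\Hom_{\FI_G}([m],[k])$ is finite), and by the displayed computation each $(P_{[m]}\odot P_{[n]})([k])$ with $k\le m+n$ is a finitely generated $\bk$‑module; hence $P_{[m]}\odot P_{[n]}$ is generated by a finite subset, i.e.\ finitely generated, and by the first paragraph so is $M\odot N$. The only real content is the bounded‑generation claim, resting on the elementary fact that a pair of $\FI_G$‑morphisms out of $[m]$ and $[n]$ factors through the union of their images (of size $\le m+n$), via a merging morphism chosen with trivial $G$‑labels so as not to disturb the decorations. This is also precisely the point where finiteness of $G$ is essential: for infinite $G$ each $(P_{[m]}\odot P_{[n]})([k])$ has infinite rank and the statement genuinely fails (already $P_{[1]}\odot P_{[1]}$ is not finitely generated), so I do not expect any further obstacle once this is handled.
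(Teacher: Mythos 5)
Your proof is correct and uses the same key idea as the paper: factoring a pair of $G$-injections through the union of their images via an inclusion with trivial $G$-labels, which bounds the generation degree by $m+n$. The paper packages this as the verification that the diagonal functor $\FI_G \to \FI_G \times \FI_G$ satisfies property~(F) and then cites \cite[Proposition 3.3.2]{catgb}, whereas you carry out the reduction to principal projectives and the bounded-generation argument by hand; the content is identical.
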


\begin{proof}
By \cite[Proposition 3.3.2]{catgb}, it suffices to prove that the diagonal functor 
\[
\Delta \colon \FI_G \to \FI_G \times \FI_G
\]
satisfies property (F). To see this, let $x$ and $x'$ be finite sets and consider $G$-injections 
\[
(f,\sigma) \colon x \to y \text{ and } (f',\sigma') \colon x' \to y.
\]
Let $\ol{y} = f(x) \cup f'(x')$ and define a morphism $(g,1) \colon \ol{y} \to y$ where $g \colon \ol{y} \to y$ is the inclusion and $1$ denotes the constant map $\ol{y} \to G$. Also define $\ol{f} \colon x \to \ol{y}$ and $\ol{f}' \colon x' \to \ol{y}$ to be the maps induced by $f$ and $f'$. Then $(f, \sigma) = (g,1) \circ (\ol{f}, \sigma)$ and $(f', \sigma') = (g,1) \circ (\ol{f}', \sigma')$. Since $\#\ol{y} \le \#x + \#x'$ and $G$ is finite, there are only finitely many choices for $\ol{y}$, $(\ol{f}, \sigma)$, and $(\ol{f}', \sigma')$ up to isomorphism and taking all of these choices shows that $\Delta$ satisfies property (F).
\end{proof}

\subsection{Wreath product version of Murnaghan's stability theorem} \label{sec:wreath-stab}

Using Proposition~\ref{FIGrep} and following \cite[\S 3.4]{fimodules}, we can deduce a generalization of Murnaghan's stability theorem for tensor products of representations of symmetric groups to arbitrary wreath products. For this section, set $\bk = \bC$, and assume that $G$ is finite.

For basics on complex representations of wreath products of finite groups, see \cite[Chapter 5]{kerber}. Let $R(G)$ be the set of isomorphism classes of irreducible representations of $G$. The irreducible representations $V(\ul{\lambda})$ of the wreath product $S_n \ltimes G^n$ are indexed by partition-valued functions $\ul{\lambda}$ on $R(G)$ such that $|\ul{\lambda}| = \sum_{x \in R(G)} |\ul{\lambda}(x)| = n$. 

\begin{remark} \label{rmk:wr-translate}
To relate this with Proposition~\ref{FIGrep}, we make the following observations. First, given a representation $V$ of $G$ and a representation $W$ of $S_n$, the vector space $V^{\otimes n} \otimes W$ carries an action of $S_n \ltimes G^n$: for $v_i \in V$, $w \in W$, $(g_1, \dots, g_n) \in G^n$, and $\sigma \in S_n$, we have
\begin{align*}
(g_1, \dots, g_n) \cdot (v_1 \otimes \cdots \otimes v_n \otimes w) &= g_1v_1 \otimes \cdots \otimes g_nv_n \otimes w\\
\sigma \cdot (v_1 \otimes \cdots \otimes v_n \otimes w) &= v_{\sigma^{-1}(1)} \otimes \cdots \otimes v_{\sigma^{-1}(n)} \otimes \sigma w.
\end{align*}
This extends to an action of $S_n \ltimes G^n$ on $V^{\otimes n} \otimes W$; write this representation as $V \wr W$. The representation $V(\ul{\lambda})$ can be constructed as follows. For each $V \in R(G)$, let $n_V = |\ul{\lambda}(V)|$; $\bigotimes_V V \wr \bM_{\ul{\lambda}(V)}$ is a representation of $\prod_V S_{n_V} \ltimes G^{n_V}$, and this group can be identified with a subgroup of $S_n \ltimes G^n$. The induction is $V(\ul{\lambda})$. 

If we think of $V(\ul{\lambda})$ as a $\FI_G$-module concentrated in degree $n$, then we see from the description just given that, under the bijection in Proposition~\ref{FIGrep}, $V(\ul{\lambda})$ corresponds to the tensor product of the $\bM_{\ul{\lambda}(V)}$, thought of as an $\FI \times \FB^r$-module.
\end{remark}

Given a partition $\lambda$ and a positive integer $n$, define $\lambda[n] = (n-|\lambda|, \lambda_1, \lambda_2, \dots)$ (we assume that $n$ is large enough so that this is a partition). Given any partition-valued function $\ul{\lambda}$ on $R(G)$, let $\ul{\lambda}[n]$ denote the function which agrees with $\ul{\lambda}$ on all non-trivial representations and is $\lambda(1_G)[n]$ on the trivial representation $1_G$.

\begin{theorem} \label{thm:wr-stab}
Fix partition-valued functions $\ul{\lambda}, \ul{\mu}, \ul{\nu}$ on $R(G)$. The multiplicity of $V(\ul{\nu}[n])$ in $V(\ul{\lambda}[n]) \otimes V(\ul{\mu}[n])$ as representations of $S_n \ltimes G^{n}$ is independent of $n$ for $n \gg 0$.
\end{theorem}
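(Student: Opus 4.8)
The plan is to transport the problem to ordinary $\FI$-modules via Proposition~\ref{FIGrep} and then quote the corresponding statement for $\FI$-modules, exactly along the lines of \cite[\S 3.4]{fimodules}. First I would realize the sequence $n \mapsto V(\ul{\lambda}[n])$ as a finitely generated $\FI_G$-module. Recall from \cite{fimodules} that for any single partition $\alpha$ there is a finitely generated $\FI$-module $V_\alpha$ with $V_\alpha([m]) \cong \bM_{\alpha[m]}$ for $m \gg 0$, and that a finitely generated $\FB$-module concentrated in a single degree $k$ is just a representation of $S_k$ (and every such is finitely generated, since $\Hom_{\FB}([k],[n])$ is empty for $n \ne k$). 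So I would form the external tensor product
\[
\mathcal{L}' \;=\; V_{\ul{\lambda}(1_G)} \boxtimes \bM_{\ul{\lambda}(V_1)} \boxtimes \cdots \boxtimes \bM_{\ul{\lambda}(V_r)},
\]
with $\bM_{\ul{\lambda}(V_i)}$ placed in $\FB$-degree $|\ul{\lambda}(V_i)|$; since an external tensor product of finitely generated modules over the factors is finitely generated over the product category, $\mathcal{L}'$ is a finitely generated $\FI \times \FB^r$-module, so by Proposition~\ref{FIGrep} it corresponds to a finitely generated $\FI_G$-module $\mathcal{L}$ (finite generation being visible from the description of finitely generated $\FI_G$-modules following Proposition~\ref{FIGrep}). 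The dictionary in Remark~\ref{rmk:wr-translate} should then give $\mathcal{L}([n]) \cong V(\ul{\lambda}[n])$ for all $n \gg 0$. Define $\mathcal{M}$ the same way from $\ul{\mu}$.

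Next, since $\bk = \bC$ is commutative, Proposition~\ref{prop:segre-FIG} tells us that $\mathcal{L} \odot \mathcal{M}$ is a finitely generated $\FI_G$-module, and $(\mathcal{L} \odot \mathcal{M})([n]) \cong V(\ul{\lambda}[n]) \otimes V(\ul{\mu}[n])$ (diagonal $S_n \ltimes G^n$-action) for $n \gg 0$. Transporting back along Proposition~\ref{FIGrep}, $\mathcal{L} \odot \mathcal{M}$ becomes a finitely generated $\FI \times \FB^r$-module $\mathcal{N} = \bigoplus_{\mathbf{m}} \mathcal{N}_{\mathbf{m}}$ with only finitely many nonzero summands, each $\mathcal{N}_{\mathbf{m}}$ a finitely generated $\FI$-module carrying a commuting action of $S_{m_1} \times \cdots \times S_{m_r}$. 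Fixing $\ul{\nu}$, set $\mathbf{m}_0 = (|\ul{\nu}(V_1)|, \ldots, |\ul{\nu}(V_r)|)$ and $c = \sum_{i} |\ul{\nu}(V_i)|$, decompose $\mathcal{N}_{\mathbf{m}_0}$ into isotypic pieces for $S_{m_1} \times \cdots \times S_{m_r}$, and let $\mathcal{P}$ be the multiplicity space of the piece for $\bM_{\ul{\nu}(V_1)} \boxtimes \cdots \boxtimes \bM_{\ul{\nu}(V_r)}$; as a direct summand of $\mathcal{N}_{\mathbf{m}_0}$, and since $\Rep_{\bC}(\FI)$ is noetherian and we are in characteristic zero, $\mathcal{P}$ is again a finitely generated $\FI$-module. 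Unwinding formula~\eqref{eq5} together with Remark~\ref{rmk:wr-translate} (a Mackey/Frobenius computation for the wreath-product induction), I expect to obtain: for $n \gg 0$, the multiplicity of $V(\ul{\nu}[n])$ in $V(\ul{\lambda}[n]) \otimes V(\ul{\mu}[n])$ equals the multiplicity of $\bM_{\ul{\nu}(1_G)[n-c]}$ in the $S_{n-c}$-representation $\mathcal{P}([n-c])$.

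To conclude, I would invoke the structure theory of finitely generated $\FI$-modules over a field of characteristic zero \cite{fimodules}: for such a module $\mathcal{P}$ and any partition $\beta$, the multiplicity of $\bM_{\beta[m]}$ in $\mathcal{P}([m])$ stabilizes for $m \gg 0$ (this is precisely the fact that finitely generated $\FI$-modules are representation stable). Applying it with $\beta = \ul{\nu}(1_G)$ and $m = n-c$ finishes the proof.

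The hard part will be the translation in the second paragraph: one must check, using the equivalence of Proposition~\ref{FIGrep} and the description in Remark~\ref{rmk:wr-translate}, that the multiplicity of the \emph{moving} irreducible $V(\ul{\nu}[n])$ in the tensor product coincides---after discarding the \emph{fixed} data attached to the non-trivial irreducibles of $G$---with the multiplicity of the moving $\FI$-irreducible $\bM_{\ul{\nu}(1_G)[n-c]}$ in the single finitely generated $\FI$-module $\mathcal{P}$. Everything else (external tensor products, pointwise tensor products, and passage to direct summands in characteristic zero all preserve finite generation; representation stability of finitely generated $\FI$-modules) is either formal or already available in \cite{fimodules,catgb}.
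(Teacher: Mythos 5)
Your proposal is correct, and its skeleton coincides with the paper's: both realize $n \mapsto V(\ul{\lambda}[n])$ as a finitely generated $\FI_G$-module via Proposition~\ref{FIGrep} and Remark~\ref{rmk:wr-translate}, and both use Proposition~\ref{prop:segre-FIG} to see that the pointwise tensor product is again finitely generated. Where you diverge is in the endgame. The paper passes to the Serre quotient $\Mod_{K,G} = \Mod_{\FI_G}/\Mod_{\FI_G}^{\tors}$, imports from \cite{symc1} that every object there has finite length with simple constituents $L_{\ul{\nu}}$, and reads off the stable multiplicities from a finite filtration of $L_{\ul{\lambda}} \odot L_{\ul{\mu}}$ (since torsion modules live in finitely many degrees, the filtration computes the honest tensor product for $n \gg 0$). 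You instead stay at the level of modules: you decompose the $\FI\times\FB^r$-avatar of $\mathcal{L}\odot\mathcal{M}$ by $\FB^r$-degree, extract the $\bM_{\ul{\nu}(V_1)}\boxtimes\cdots\boxtimes\bM_{\ul{\nu}(V_r)}$-isotypic multiplicity space $\mathcal{P}$ (a finitely generated $\FI$-module, being a summand of one), and quote representation stability of finitely generated $\FI$-modules from \cite{fimodules} to stabilize the multiplicity of $\bM_{\ul{\nu}(1_G)[n-c]}$ in $\mathcal{P}([n-c])$. The Clifford-theoretic translation you flag as the hard part is exactly the content of Proposition~\ref{FIGrep} together with the standard description of irreducibles of $S_n\ltimes G^n$ in Remark~\ref{rmk:wr-translate}, and it does go through in characteristic $0$. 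The two closing inputs are essentially equivalent facts about $\FI$-modules packaged differently; the paper's version generalizes more readily (it is what makes the torsion bookkeeping transparent), while yours is more elementary in that it avoids the quotient-category formalism and cites only the classical representation-stability theorem.
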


\begin{proof}
We first recall a few facts about $\FI$-modules from \cite{symc1} (that paper is not written in the language of $\FI$-modules, but see \cite[Proposition 1.3.5]{symc1}). Let $\Mod_\FI^{\tors}$ be the category of finitely generated torsion $\FI$-modules (for finitely generated modules, torsion is equivalent to being nonzero only in finitely many degrees) and let $\Mod_\FI$ be the category of finitely generated $\FI$-modules. Then every object in $\Mod_K := \Mod_\FI / \Mod_{\FI}^\tors$ has finite length \cite[Corollary 2.2.6]{symc1}. Furthermore, the pointwise tensor product of two finitely generated $\FI$-modules is also finitely generated \cite[Proposition 2.3.6]{fimodules}, and hence the pointwise tensor product of two finite length objects in $\Mod_K$ still has finite length.

For every partition $\lambda$, there is an $\FI$-module $L_\lambda^0$ generated in degree $|\lambda|$ whose value on a set of size $n$ is the representation $\bM_{\lambda[n]}$ (see \cite[\S 2.2]{symc1}). Let $L_\lambda$ be the image of $L_\lambda^0$ in $\Mod_K$; $L_\lambda$ is a simple object, and all simple objects are of this form \cite[Corollary 2.2.7]{symc1}.

A representation $\bM_\mu$ is naturally a functor on $\FB$ and functors on $\FI \times \FB^r$ can be built by taking the tensor product of functors on $\FI$ and $r$ functors on $\FB$. For every partition-valued function $\ul{\lambda}$ on $R(G)$, define a functor $L_{\ul{\lambda}}^0$ on $\FI \times \FB^r$ which is the tensor product of $L_{\lambda(1_G)}^0$ on $\FI$ and $\bM_{\lambda(V)}$ on the copy of $\FB$ which is indexed by $V$. From Remark~\ref{rmk:wr-translate}, using the equivalence in Proposition~\ref{FIGrep}, this corresponds to a functor on $\FI_G$ whose value on a set of size $n$ is $V(\ul{\lambda}[n])$ if $n$ is large enough for $\ul{\lambda}[n]$ to be a partition-valued function (and $0$ otherwise). 

Write $\Mod_{K,G} := \Mod_{\FI_G} / \Mod_{\FI_G}^\tors$. Using the facts for $\FI$ and $\Mod_K$ recalled above, every object of $\Mod_{K,G}$ has finite length and the simple objects are of the form $L_{\ul{\lambda}}$. Furthermore, the pointwise tensor product preserves finite generation in $\FI_G$ (Proposition~\ref{prop:segre-FIG}), so $L_{\ul{\lambda}} \boxtimes L_{\ul{\mu}}$ has a finite filtration by modules of the form $L_{\ul{\nu}}$ in $\Mod_{K,G}$. Finally, we note that the modules in $\Mod_{\FI_G}^\tors$ are concentrated in finitely many degrees. So the filtration encodes the usual tensor product for large enough degree. This implies the desired stability result.
\end{proof}

\begin{remark}
When $G$ is trivial, the proof of Theorem~\ref{thm:wr-stab} generalizes the one given in \cite[\S 3.4]{fimodules} and when $G = \bZ/2$, Theorem~\ref{thm:wr-stab} is proven in \cite[Theorem 5.3]{wilson}. 
\end{remark}

\subsection{Noetherianity}

In this section we prove Theorem~\ref{mainthm:fi1}. Define $\OI_G$ to be the category whose objects are finite ordered sets and where morphisms are pairs $(f, \rho)$ as in $\FI_G$, except that $f$ is required to be order-preserving. We first prove:

\begin{theorem} \label{oig:noeth}
Suppose that $\bk[G^n]$ is left-noetherian for all $n \ge 0$. Then $\Rep_{\bk}(\OI_G)$ is noetherian.
\end{theorem}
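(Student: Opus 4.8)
The strategy is to reduce noetherianity of $\Rep_{\bk}(\OI_G)$ to noetherianity of $\Rep_{\bk'}(\OI)$ for the non-commutative ring $\bk' = \bk[G^n]$, exploiting the fact that $\OI$ (equivalently $\FI$) is Gr\"obner and hence $\Rep_{\bk'}(\OI)$ is noetherian for any left-noetherian $\bk'$ (by Theorem~\ref{grobnoeth}, applied with the non-commutative coefficient ring). The key structural observation is that $\OI_G$ is built from $\OI$ by ``adjoining'' the data of the maps $\rho \colon R \to G$: concretely, a morphism $(f,\rho) \colon [m] \to [n]$ in $\OI_G$ consists of an order-preserving injection $f$ together with an element of $G^m$. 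Since the $G$-component $\rho$ lives on the source, while composition twists by pulling back along $f$, the principal projective $P_{[n]}$ of $\OI_G$, restricted to sets of size $\ge n$, carries a natural left $\bk[G^n]$-module structure (acting on $\rho$ via right multiplication in the $[n]$ slots, as in the proof of Proposition~\ref{prop:FIG-propertyF}).

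First I would make this precise by fixing $n$ and defining, for each $m \ge n$, a functor comparing $\OI_G$-modules generated in degrees $\le n$ with $\OI$-modules over $\bk[G^n]$. More carefully: given an $\OI_G$-module $M$ generated by $M([0]) \oplus \cdots \oplus M([n])$, I want to package the $G$-action data so that the underlying $\OI$-module structure (forgetting to $\bk[G^?]$-modules appropriately) sees everything. The cleanest route is probably to filter: it suffices to show every subobject of a finite direct sum of principal projectives $P_{[n_i]}$ is finitely generated, and by a standard dévissage one reduces to a single $P_{[n]}$. Then I would argue that $P_{[n]}$, as an $\OI_G$-module, when restricted via the functor $\Phi \colon \OI \to \OI_G$ (order-preserving analogue of the $\Phi$ from Proposition~\ref{prop:FIG-propertyF}), pulls back to a finitely generated $\OI$-module over the ring $\bk[G^n]$ — indeed $\Phi$ satisfies property~(F) by the same counting argument as in Proposition~\ref{prop:FIG-propertyF} (there are only $|G|^n$ automorphisms of $[n]$ of the form $(1,g)$), so $\Phi^*$ preserves finite generation by Proposition~\ref{propFfg}, and $\Phi$ is essentially surjective, so by Proposition~\ref{pullback-fg} (applied over the coefficient ring $\bk$, but the same works over $\bk[G^n]$) noetherianity descends.

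The subtle point — and the main obstacle — is that $\Phi \colon \OI \to \OI_G$ has coefficient ring $\bk$, not $\bk[G^n]$, so $\Phi^*$ alone only sees $\bk$-module structure and won't suffice to prove noetherianity of $\Rep_{\bk}(\OI_G)$ from noetherianity of $\Rep_{\bk}(\OI)$ (which would be circular, or rather too weak — the whole point is that the $G$-direction need not be noetherian over $\bk$ unless $\bk[G^n]$ is noetherian). So the real content is to set up a functor that converts the $G$-data into coefficients. I expect the right move is: consider the category $\OI$ and the functor $\Psi \colon \OI \to \OI_G$, and observe that for an $\OI_G$-module $M$, the pullback $\Psi^* M$ is naturally a module over the category $\OI$ with coefficients enhanced by the group action — equivalently, restrict attention to a skeleton and note that a subobject of $P_{[n]}$ in $\Rep_{\bk}(\OI_G)$, when we look at its values $M([m])$ for $m \ge n$, is controlled by its value structure as a $\bk[G^n] \otimes_\bk \bk[\OI_{\ge n}]$-type module. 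Making the bookkeeping of ``which copies of $G$ act'' precise across the transition maps $[m] \to [m+1]$ (new elements get constant $G$-label, so the $\bk[G^m]$-action extends compatibly to a $\bk[G^{m+1}]$-action only after accounting for the inclusion $G^m \hookrightarrow G^{m+1}$) is where the argument needs care; the finiteness of $G$ is not used, only left-noetherianity of each $\bk[G^n]$, so the inductive structure in $n$ must be handled uniformly.

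Once the comparison is set up, the conclusion is immediate: any ascending chain of subobjects of $\bigoplus_i P_{[n_i]}$ in $\Rep_{\bk}(\OI_G)$ corresponds, in degrees $\ge \max n_i$, to an ascending chain in a finitely generated module over $\Rep_{\bk[G^N]}(\OI)$ for $N = \max n_i$, which stabilizes by Theorem~\ref{grobnoeth} (since $\OI \cong \FI^{\mathrm{op}\text{-free}}$ is Gr\"obner and $\bk[G^N]$ is left-noetherian by hypothesis); and the finitely many lower degrees $m < N$ only contribute finite-dimensional-over-$\bk$... no — contribute noetherian pieces because each $M([m])$ is a module over the noetherian ring $\bk[S_m \wr G] = \bk[G^m] \rtimes \bk[S_m]$, which is left-noetherian since $\bk[G^m]$ is and $S_m$ is finite. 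Assembling these finitely many stabilizations gives that the chain stabilizes, proving $\Rep_{\bk}(\OI_G)$ is noetherian.
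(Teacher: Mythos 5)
Your reduction target is the right one --- noetherianity of $\Rep_{\bk[G^n]}(\OI)$, which holds by \cite[Theorem~7.1.1]{catgb} since $\bk[G^n]$ is left-noetherian --- and your observation that $P_{[n]}$ carries a left $R$-module structure for $R=\bk[G^n]$ (acting on the $G$-labels of morphisms out of $[n]$) matches the paper. But the proposal has a genuine gap exactly at the point you flag and then skip over. The $R$-action on $P_{[n]}$ comes from \emph{pre}-composition with the automorphisms $(\mathrm{id}_{[n]},b)$ of $[n]$; these are endomorphisms of $P_{[n]}$ as a $\bk[\OI_G]$-module, but a $\bk[\OI_G]$-submodule $M\subseteq P_{[n]}$ is only closed under the structure maps (post-composition), not under endomorphisms of the ambient projective. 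So $M$ need \emph{not} be an $R$-submodule --- this is precisely Remark~\ref{rmk:emulate} --- and your final paragraph, which asserts that an ascending chain of $\bk[\OI_G]$-submodules ``corresponds'' to an ascending chain of $\bk[G^N][\OI]$-submodules, is therefore not justified. Without a bridge here, the argument does not close.

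The paper's bridge is a Gr\"obner-type initial-module argument: order the order-preserving injections $[n]\to[m]$ lexicographically, and for a $\bk[\OI_G]$-submodule $M$ define $\init(M)$ by taking leading terms $b\,e_f$ with $b\in R$. One then shows $\init(M)$ \emph{is} an $R[\OI]$-submodule even though $M$ is not: given $x\in M([m])$ with leading term $b_1 e_{f_1}$ and $a\in G^n$, lift $a$ to $\wt a\in G^m$ along the surjection $f_1^*\colon G^m\to G^n$; then $(\mathrm{id}_{[m]},\wt a)$ is an automorphism of $[m]$ in $\OI_G$, so $\wt a x\in M([m])$, and its leading term is $ab_1 e_{f_1}$ (the other terms get multiplied by $f_i^*(\wt a)$, which does not disturb the leading position). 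An ascending chain $M_\bullet$ then yields an ascending chain $\init(M_\bullet)$ of $R[\OI]$-submodules, which stabilizes by noetherianity of $\Rep_R(\OI)$, and the standard Gr\"obner argument (\cite[Proposition~4.2.2]{catgb}) transfers stabilization back to $M_\bullet$. If you add this leading-term mechanism, your outline becomes the paper's proof; as written, the central step is missing.
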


\begin{proof}
It suffices to show that the principal projective $P_n$ of $\OI_G$ at $[n]$ is noetherian. As a $\bk$-module, we have
\begin{displaymath}
P_n([m])=\bigoplus_{f \colon [n] \to [m],\ a \in G^n} \bk e_{(f,a)}
\end{displaymath}
where $(f,a)$ runs over $\Hom_{\OI_G}([n], [m])$ and $e_{(f,a)}$ is a basis vector. Set $R = \bk[G^n]$. We give $P_n([m])$ the structure of a left $R$-module by $be_{(f,a)}=e_{(f,ba)}$. Let $e_f=e_{(f,1)}$. Then the vectors $e_f$'s form a basis for $P_n([m])$ as a left $R$-module. Suppose that $g \colon [m] \to [\ell]$ is a morphism in $\OI$ and $b \in G^n$. Then
\begin{displaymath}
g_*(be_{(f,a)})=g_*(e_{(f,ba)})=e_{(gf,ba)}=be_{(gf,a)}.
\end{displaymath}
Thus the $\OI$-module structure on $P_n$ is compatible with the left $R$-module structure. Now, $P_n([m])$ also has a left action of $\Aut_{\OI_G}([m])=G^m$. If $b \in G^m$ then $b e_{(f,a)}=e_{(f,f^*(b)a)}$ where $f^* \colon G^m \to G^n$ is the pullback map. We can thus write $b a e_f=f^*(b) a e_f$ for $a \in R$.

In general, $\bk[\OI_G]$-submodules of $P_n$ need not be $R[\OI]$-submodules (see Remark~\ref{rmk:emulate}), but this is true for ``monomial'' subrepresentations, which is enough to prove the theorem, as we now explain.

Following \cite[\S\S 4.2, 7.1]{catgb}, we order the set of order-preserving injections $[n] \to [m]$ ($n$ fixed, $m$ varying) using the lexicographic order. Given $x = \sum_i b_i e_{f_i} \in P_n([m])$ where $b_i \in R$ is non-zero and $f_i \in \hom_\OI([n],[m])$, with the $f_i$ distinct, define $\init(x)$ to be $b_i e_{f_i}$ where $f_i$ is lexicographically maximal among the $f$'s occurring in the sum.

Let $M$ be a $\bk[\OI_G]$-submodule of $P_n$. Define the initial submodule $\init(M)$ of $M$ by setting $\init(M)(S)$ to be the $\bk$-span of $\{ \init(x) \mid x \in M(S)\}$. By \cite[\S 4.2]{catgb}, $\init(M)$ is a $\bk[\OI]$-submodule of $P_n$. We claim that $\init(M)$ is also an $R$-submodule. Indeed, suppose that $be_f \in \init(M)([m])$. Let $x=\sum_i b_i e_{f_i}$ be an element of $M([m])$ with $\init(x)=be_f$. We assume that $b_1e_{f_1}=be_f$, and that the $f_i$ are distinct. Pick $a \in G^n$. The forgetful map $f_1^* \colon G^m \to G^n$ is surjective, so  choose $\wt{a} \in G^m$ with $f_1^*(\wt{a})=a$. We have
\begin{displaymath}
\wt{a} x = a b_1 e_{f_1} + \sum_{i>1} f_i^*(\wt{a}) b_i e_{f_i}.
\end{displaymath}
Since $a$ is a unit of $R$, it follows that $ab_1 \ne 0$, and so $\init(\wt{a} x)=a b_1 e_{f_1}$. We have $\wt{a} x \in M([m])$, since it is obtained from $x$ using the $\OI_G$-module structure, and so we see that $a b_1 e_{f_1}=a \cdot \init(x)$ belongs to $\init(M)([m])$, proving the claim.

Now suppose that $M_1 \subseteq M_2 \subseteq \cdots$ is an ascending chain of $\bk[\OI_G]$-submodules of $P_n$. Then $\init(M_1) \subseteq \init(M_2) \subseteq \cdots$ is an ascending chain of $R[\OI]$-submodules of $P_n$, and therefore stabilizes since $\Rep_R(\OI)$ is noetherian \cite[Theorem~7.1.1]{catgb}. It follows from a standard Gr\"obner basis argument (see \cite[Proposition 4.2.2]{catgb}) that the chain $M_{\bullet}$ stabilizes, and so $P_n$ is noetherian.
\end{proof}

\begin{remark} \label{rmk:emulate}
If $M$ is a $\bk[\OI_G]$-submodule of $P_n$ then it need not be closed under the action of $R$ by pre-composition: that is, there is no way to emulate pre-composition with automorphisms of $[n]$ using post-composition with other morphisms. We do not prove this (as it is unnecessary to do so), but give an illustrative example. The group $P_n([m])$ has the structure of a $\bk[G^m]$-module, since $G^m \subset \Aut_{\OI_G}([m])$. If $a \in \bk[G^m]$ and $x=\sum b_f e_f$ is in $P_n([m])$, with $b_f \in \bk[G^n]$, then $ax=\sum f^*(a) b_f e_f$, where $f^* \colon \bk[G^m] \to \bk[G^n]$ is induced by $f^* \colon G^m \to G^n$. Given $\ol{a} \in R$, it is not generally possible to pick $a \in \bk[G^m]$ such that $f^*(a)=\ol{a}$ for all $f$, and so one cannot emulate the $R$-structure using the $\bk[G^m]$-structure.
\end{remark}

Before proving Theorem~\ref{mainthm:fi1}, we require two lemmas.

\begin{lemma} \label{oigF}
The forgetful functor $\Phi \colon \OI_G \to \FI_G$ satisfies property~{\rm (F)}.
\end{lemma}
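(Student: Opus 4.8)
The plan is to verify property (F) directly from Definition~\ref{def:propF}, mirroring the way one checks that the forgetful functor $\OI \to \FI$ satisfies property (F) in \cite{catgb}, but carrying along the extra data of the function to $G$. So fix an object $x$ of $\FI_G$, say a finite set of size $n$; we must produce finitely many objects $y_i$ of $\OI_G$ and $G$-injections $f_i \colon x \to \Phi(y_i)$ with the stated universal property.

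First I would dispose of the ordering issue exactly as in the non-equivariant case. For any $G$-injection $(f,\sigma) \colon x \to y$ in $\FI_G$, the image $f(x) \subseteq y$ inherits a total order from $y$ once $y$ is ordered, and via $f$ this pulls back to a total order on $x$; conversely, every total order on $x$ arises this way. There are exactly $n!$ total orders on $x$, so let $y_1, \dots, y_{n!}$ all be the ordered set $[n]$, and let $f_i = (\phi_i, 1)$ where $\phi_i \colon x \to [n]$ is the (unique) order-preserving bijection for the $i$-th total order on $x$ and $1 \colon x \to G$ is the constant map at the identity. Given an arbitrary $(f,\sigma) \colon x \to \Phi(y)$ in $\FI_G$, pick the index $i$ so that $\phi_i$ is order-preserving for the order that $f$ pulls back from $y$; then $f \colon x \to y$, as an order-preserving injection of ordered sets after identifying $x$ with $[n]$ via $\phi_i$, is a morphism in $\OI$, i.e.\ there is an order-preserving injection $\psi \colon [n] \to y$ with $\psi \circ \phi_i = f$. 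It remains to upgrade $\psi$ to a morphism $g = (\psi, \tau) \colon y_i \to y$ in $\OI_G$ with $\Phi(g) \circ f_i = (f,\sigma)$; composing in $\FA_G$ gives $\Phi(g)\circ f_i = (\psi \circ \phi_i, \; \tau(\phi_i(-))\cdot 1) = (f, \tau \circ \phi_i)$, so we just need $\tau \colon [n] \to G$ with $\tau \circ \phi_i = \sigma$, and since $\phi_i$ is a bijection we simply set $\tau = \sigma \circ \phi_i^{-1}$.

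This produces the required finite list $y_1,\dots,y_{n!}$ and maps $f_1,\dots,f_{n!}$, and the above shows the factorization condition holds, so $\Phi$ satisfies property (F). I do not anticipate a genuine obstacle here: the only thing to be careful about is bookkeeping the $G$-labels through the composition law $(h,\tau)=(g\circ f,\ \sigma'\circ f \cdot \rho)$ of $\FA_G$, and observing that because $\phi_i$ is an isomorphism the label $\tau$ on $g$ can be solved for uniquely — so no finiteness is lost beyond the $n!$ choices of ordering already present in the $G$-trivial case. One could alternatively deduce the lemma abstractly: the forgetful functor $\OI_G \to \FI_G$ factors compatibly over $\OI \to \FI$ (which satisfies property (F) by \cite[proof of Theorem~7.1.4]{catgb}) together with the property-(F) functor $\FI \to \FI_G$ of Proposition~\ref{prop:FIG-propertyF}, via Proposition~\ref{propFcomp}; but the direct argument above is cleaner and self-contained.
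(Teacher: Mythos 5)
Your proof is correct and is essentially the paper's argument: both decompose an arbitrary $G$-injection out of $x$ into one of the $n!$ reorderings of $x$ (decorated with the trivial $G$-function) followed by an order-preserving morphism in $\OI_G$, transferring the $G$-label entirely onto the second factor since the first is a bijection. The only difference from the paper's proof is cosmetic (the paper fixes one ordered copy $\ol{x}$ and uses permutations $(\sigma,1)$ of $x$ as the $f_i$, rather than order-preserving bijections to $[n]$).
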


\begin{proof}
Let $x$ be an object of $\FI_G$ and $y$ an object of $\OI_G$. Choose a total order on $x$, and write $\ol{x}$ for the corresponding object of $\OI_G$, so that $x=\Phi(\ol{x})$. Consider a morphism $f=(f_0,\rho) \colon x \to \Phi(y)$ in $\FI_G$. We can factor $f_0$ as $x \stackrel{\sigma}{\to} x \stackrel{g_0}{\to} y$, where $\sigma$ is a permutation and $g_0$ is order-preserving. Let $g=(g_0, \rho \circ \sigma^{-1})$, so that $g$ defines a morphism $\ol{x} \to \ol{y}$ in $\OI_G$. Let $\wt{\sigma}=(\sigma,1)$, where $1$ is the constant function $x \to G$, so that $\wt{\sigma} \colon x \to x$ is a morphism in $\FI_G$. Then $f=\Phi(g) \circ \wt{\sigma}$. Since there are only finitely many choices for $\wt{\sigma}$, the result follows.
\end{proof}

\begin{lemma} \label{twisted-noeth}
Let $R$ be a ring and let $H$ be a group acting on $R$. Suppose that the twisted group algebra $R[H]$ is left-noetherian. Then $R$ is left-noetherian.
\end{lemma}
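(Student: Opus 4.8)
The plan is to realize $R$ as a quotient of $R[H]$ in such a way that left ideals of $R$ pull back to left ideals of $R[H]$, so that an ascending chain of left ideals of $R$ lifts to an ascending chain in $R[H]$. Recall that the twisted group algebra $R[H]$ has underlying left $R$-module $\bigoplus_{h \in H} R u_h$ with multiplication $u_h \cdot r = h(r) u_h$ and $u_h u_{h'} = u_{hh'}$ (possibly up to a $2$-cocycle, which does not affect the argument); here $u_1 = 1$. The key observation is that the projection $\pi \colon R[H] \to R$ sending $\sum_h r_h u_h \mapsto r_1$, i.e.\ picking out the coefficient of the identity element, is a homomorphism of left $R$-modules (it is the projection onto the $u_1$-summand). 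It is generally \emph{not} a ring homomorphism, but that is not needed.

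First I would check that for any left ideal $I \subseteq R$, the preimage $\pi^{-1}(I)$ is a left ideal of $R[H]$. It is visibly an additive subgroup, so I only need closure under left multiplication by $R[H]$; since $R[H]$ is generated as a ring by $R$ and the $u_h$, it suffices to check multiplication by elements of $R$ and by the $u_h$. For $r \in R$ and $x = \sum_h r_h u_h$, the $u_1$-coefficient of $rx$ is $r r_1$, which lies in $I$ whenever $r_1 \in I$. For $u_h$ with $h \neq 1$, the $u_1$-coefficient of $u_h x = \sum_{h'} h(r_{h'}) u_{h h'}$ comes from the term $h' = h^{-1}$, so it is $h(r_{h^{-1}})$; thus $\pi(u_h x)$ need not lie in $I$ — so $\pi^{-1}(I)$ is only a left ideal if I am more careful. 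The fix is standard: instead use that $\pi$ restricted to $R = R u_1 \subseteq R[H]$ is the identity, and observe that $R \cap (R[H] \cdot J)$ for a left ideal $J$ of $R[H]$ is a left ideal of $R$ (intersection of a left ideal of $R[H]$ with the subring $R$). So the right move is to go the other direction: given an ascending chain $I_1 \subseteq I_2 \subseteq \cdots$ of left ideals of $R$, form $J_k = R[H] \cdot I_k$, the left ideal of $R[H]$ generated by $I_k$. Then $J_1 \subseteq J_2 \subseteq \cdots$ stabilizes by hypothesis, and it remains to recover $I_k$ from $J_k$.

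The crux is therefore: $I_k = J_k \cap R$, where $R = R u_1$. The inclusion $\supseteq$ is trivial since $I_k \subseteq R \cap J_k$ is immediate. For $\subseteq$ I would compute $J_k = R[H] \cdot I_k = \bigl(\bigoplus_h R u_h\bigr) I_k = \bigoplus_h u_h I_k = \bigoplus_h h(I_k) u_h$ — wait, more carefully: $u_h \cdot (r u_1) = h(r) u_h$, so $R[H] \cdot I_k = \sum_h R u_h I_k$, and the $u_1$-component of this is exactly $R \cdot I_k = I_k$ (using $u_1 I_k = I_k$ and that $I_k$ is already a left $R$-module). Hence $J_k \cap R u_1 = I_k$, giving $\subseteq$. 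Applying the $u_1$-projection $\pi$, which satisfies $\pi(J_k) = I_k$ and is order-preserving on the chains, the stabilization of $(J_k)$ forces the stabilization of $(I_k)$. Therefore $R$ is left-noetherian.

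I expect the main (and only genuine) obstacle to be getting the module/ideal bookkeeping exactly right in the presence of the twisting automorphisms: one must resist the temptation to treat $\pi$ as a ring map and instead consistently work with the left-$R$-module decomposition $R[H] = \bigoplus_h R u_h$ and the fact that the identity summand $R u_1$ is a subring on which $\pi$ is the identity. Once that is set up, the argument is the standard ``a left ideal of $R$ generates a left ideal of $R[H]$ with the same identity-component'' contraction, and noetherianity descends immediately. (If $R[H]$ carries a nontrivial $2$-cocycle twist $u_h u_{h'} = \alpha(h,h') u_{hh'}$ with $\alpha(1,1) = 1$, nothing changes: the cocycle only permutes and rescales the non-identity summands and never feeds back into the $u_1$-component of $R[H] \cdot R u_1$.)
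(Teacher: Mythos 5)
Your argument is correct: for a left ideal $I \subseteq R$ one has $R[H]\cdot I = \bigoplus_{h} R\,h(I)\,u_h$, whose $u_1$-component is $RI = I$, so extending to $R[H]$ and contracting back to the $u_1$-summand recovers $I$, and an ascending chain of left ideals of $R$ whose extensions stabilize must itself stabilize. The paper runs the same transfer with different packaging: instead of extending $I$ to the left ideal $R[H]I$ and then contracting, it forms the $R[H]$-module $\wt{I} = \bigoplus_{h \in H} I^{h}$ (the direct sum of the $h$-twists of $I$), which sits inside $\wt{R} \cong R[H]$ as the left ideal $\bigoplus_h h^{-1}(I)u_h$; there injectivity of $I \mapsto \wt{I}$ is immediate and no contraction identity needs to be verified, at the cost of checking that $\wt{M}$ really is an $R[H]$-module. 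Both are instances of the standard fact that left-noetherianity descends along a ring extension that is free over the base with the base as a direct summand, so the two proofs are variants of one another rather than genuinely different. Your opening detour through $\pi^{-1}(I)$ (which, as you discover, is not a left ideal) is dead weight and should be cut from a final write-up; the remark that a $2$-cocycle twist changes nothing is correct but unnecessary here, since the paper's twisted group algebra has trivial cocycle.
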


Recall that the twisted group algebra is the set of $R$-linear combinations of elements in $H$ with the multiplication $(rh)(r'h') = rh(r') hh'$.

\begin{proof}
Let $M$ be a left $R$-module. For $h \in H$, let $M^{h}$ be the left $R$-module with underlying abelian group $M$ on which $R$ acts by $x \cdot m=h(x) m$. Let $\wt{M} = \bigoplus_{h \in H} M^{h}$. Then $\wt{M}$ is naturally a left $R[H]$-module. Suppose now that $I_1 \subseteq I_2 \subseteq \cdots$ is an ascending chain of left ideals of $R$. Then $\wt{I}_1 \subseteq \wt{I}_2 \subseteq \cdots$ is an ascending chain of left ideals of $R[H]$, and therefore stabilizes. This clearly implies that the original chain stabilizes as well, and so $R$ is left-noetherian.
\end{proof}

\begin{proof}[Proof of Theorem~\ref{mainthm:fi1}]
First suppose that $\bk[G^n]$ is left-noetherian for all $n \ge 0$. Then $\Rep_{\bk}(\OI_G)$ is noetherian by Theorem~\ref{oig:noeth}. Thus by Lemma~\ref{oigF} and Proposition~\ref{pullback-noeth} we find that $\Rep_{\bk}(\FI_G)$ is noetherian. 
Conversely, suppose that $\Rep_{\bk}(\FI_G)$ is noetherian. Let $R=\bk[G^n]$ and let $R[S_n] \cong \bk[S_n \ltimes G^n]$ be the twisted group algebra ($S_n$ acting on $G^n$ by permutations). Then a left $R[S_n]$-module is the same as a $\bk[\FI_G]$-module supported at $n$. It follows that $R[S_n]$ is left-noetherian, and so $R$ is left-noetherian by Lemma~\ref{twisted-noeth}.
\end{proof}

\subsection{Twisted homological stability for wreath products} \label{ss:twisted-stab}

Let $G$ be a group and $\bk$ be a commutative ring and let $M \colon \FI_G \to \Mod_\bk$ be an $\FI_G$-module. Let $M_n = M([n])$. Consider the morphism $[n] \to [n+1]$ consisting of the injection defined by $i \mapsto i$ and the trivial $G$-map $[n]\to G$ sending every element to the identity. This gives a $\bk[S_n \ltimes G^n]$-equivariant map $f_n \colon M_n \to M_{n+1}$.

\begin{theorem} \label{thm:wr-twisted-stab}
Assume that $G$ is a polycyclic-by-finite group and that $\bk$ is noetherian, and let $M$ be a finitely generated $\FI_G$-module. Then for each $i \ge 0$, the map
\[
{f_n}_\ast \colon \rH_i(S_n \ltimes G^n; M_n) \to \rH_i(S_{n+1} \ltimes G^{n+1}; M_{n+1})
\]
is an isomorphism for $n \gg 0$.
\end{theorem}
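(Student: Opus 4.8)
The plan is to apply the homological stability machinery of \cite{putman-sam} to the category $\FI_G$. That machinery takes as input a ``complemented category'' $\cC$, generated by a single object $X$, for which $\Rep_\bk(\cC)$ is noetherian and for which a certain associated semisimplicial set is highly connected, and it outputs, for every finitely generated $\cC$-module $M$, the eventual stabilization of the tower $\rH_i(\Aut_\cC(X^{\sqcup n}); M(X^{\sqcup n}))$ under the standard maps. So the task is to verify that $\FI_G$, together with our $M$, meets these hypotheses, in direct analogy with the case $G=1$.

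The categorical setup is immediate. Identifying $\FI_G$ with the category whose objects are the free $G$-sets with finitely many orbits and whose morphisms are the $G$-equivariant injections, it becomes a symmetric monoidal category under disjoint union, generated by the free orbit $[1]$, with $\Aut_{\FI_G}([1]^{\sqcup n}) = S_n \ltimes G^n$ and with the map $f_n$ of the theorem induced by the canonical inclusion $[1]^{\sqcup n} \hookrightarrow [1]^{\sqcup(n+1)}$; it is complemented because a $G$-equivariant injection $A \hookrightarrow B$ of free $G$-sets splits as $B \cong A \sqcup C$ with $C$ again free. Noetherianity is also in hand: since $G$ is polycyclic-by-finite and $\bk$ is noetherian, $\Rep_\bk(\FI_G)$ is noetherian by Corollary~\ref{cor:polycyclic} --- this is exactly where both hypotheses on $G$ and on $\bk$ are spent.

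The real work is the connectivity input. The semisimplicial set $W_n$ attached to $\FI_G$ has as its $p$-simplices the morphisms $[1]^{\sqcup(p+1)} \to [1]^{\sqcup n}$ in $\FI_G$, i.e.\ the injective words of length $p+1$ in $[n]$ in which each letter is additionally decorated by an element of $G$; for trivial $G$ this is the classical complex of injective words, which is $(n-2)$-connected. I would show that the $G$-decorated version remains highly connected --- connectivity at least a fixed positive fraction of $n$, the precise fraction being irrelevant --- by comparison with the undecorated complex along the letter-forgetting map $W_n \to \mathrm{IW}_n$, using either a skeletal filtration of the target and the resulting spectral sequence, or a shelling/discrete-Morse argument adapting the classical one; the content is that decorating by a discrete group cannot destroy the linear growth of the connectivity. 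This is the one genuinely new ingredient, and I expect it to be the main obstacle.

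With these inputs verified, the Putman--Sam theorem applies to the finitely generated $\FI_G$-module $M$ and yields precisely the statement: ${f_n}_\ast \colon \rH_i(S_n \ltimes G^n; M_n) \to \rH_i(S_{n+1} \ltimes G^{n+1}; M_{n+1})$ is an isomorphism for all $n \gg 0$, for each fixed $i$. (Internally, $W_n$ provides a partial resolution of $M$ by sums of principal projectives that is exact in a range tending to infinity with $n$; $\FI_G$-noetherianity makes the $\FI_G$-homology of $M$ finitely generated, hence supported in degrees growing at most linearly in $n$; and the hyperhomology spectral sequence converging to $\rH_\ast(S_n \ltimes G^n; M_n)$ then collapses in a range, forcing the tower $n \mapsto \rH_i(S_n \ltimes G^n; M_n)$ to stabilize.)
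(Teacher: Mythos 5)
Your overall strategy is the same as the paper's: apply the Putman--Sam machinery \cite[Theorem 4.2]{putman-sam} to the complemented category $\FI_G$ (monoidal under disjoint union, generated by the one-element set, with $\Aut([n]) = S_n \ltimes G^n$), and feed in noetherianity of $\Rep_\bk(\FI_G)$ from Corollary~\ref{cor:polycyclic}. That part of your argument is correct and is exactly where the polycyclic-by-finite and noetherian hypotheses are used.

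The gap is in your second input. You formulate the remaining hypothesis as high connectivity of a semisimplicial set of $G$-decorated injective words, and you explicitly do not prove it: you only sketch two possible strategies (a spectral sequence over the letter-forgetting map, or a shelling/discrete-Morse argument) and call it ``the main obstacle.'' As written, the key new ingredient of your proof is therefore missing. Moreover, the version of \cite[Theorem 4.2]{putman-sam} invoked in the paper does not ask for connectivity of such a complex at all; its second hypothesis is that the stability isomorphism already holds for the \emph{trivial} $\FI_G$-module, i.e.\ constant-coefficient homological stability for the wreath products $S_n \ltimes G^n$. This is a known result valid for arbitrary $G$ --- the paper cites \cite[Proposition 1.6]{hatcher-wahl} --- so no new connectivity argument is needed. (The connectivity statement you want is essentially what underlies such constant-coefficient results in the literature, so your route is not wrong in spirit, but in the present proof it should be replaced by, or reduced to, the citation; otherwise you have left the hardest step unproved.)
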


\begin{proof}
We apply \cite[Theorem 4.2]{putman-sam}. First, $\FI_G$ is a complemented category where the monoidal structure is given by disjoint union of sets, and the generator is the one element set. The two conditions of that theorem are: (1) the category of $\FI_G$-modules is noetherian, and (2) the isomorphism above holds for $n \gg 0$ when $M$ is the trivial $\FI_G$-module. The first point is Corollary~\ref{cor:polycyclic} and the second point is \cite[Proposition 1.6]{hatcher-wahl}.
\end{proof}

Define a shift functor $\Sigma$ on $\FI_G$-modules by $(\Sigma M)(S) = M(S \amalg \{*\})$. In \cite[Definition 4.10]{wahl}, the following definition is introduced (to avoid ambiguity, we use the phrase $\Sigma$-degree instead of just degree). First, a functor which takes the value $0$ except at finitely many isomorphism classes of objects has $\Sigma$-degree $-1$ and, in general, $F$ has $\Sigma$-degree $\le r$ if the kernel and cokernel of the natural map $F \to \Sigma F$ have $\Sigma$-degree $\le r-1$. Then \cite[Theorem 4.20]{wahl} proves Theorem~\ref{thm:wr-twisted-stab} for any $G$ under the assumption that $M$ has finite $\Sigma$-degree (and gives bounds for when stability occurs in terms of the $\Sigma$-degree). 

So it is natural to ask if a finitely generated $\FI_G$-module necessarily has finite $\Sigma$-degree. We do not know if this is true, but will now prove it when $G$ is finite.

\begin{proposition} \label{prop:finiteG-polynomial}
If $G$ is finite, then a finitely generated $\FI_G$-module has finite $\Sigma$-degree.
\end{proposition}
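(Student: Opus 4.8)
The plan is to reduce the claim for $\FI_G$-modules to the known fact that finitely generated $\FI$-modules have finite $\Sigma$-degree, using the equivalence from Proposition~\ref{FIGrep}. The subtlety is that Proposition~\ref{FIGrep} requires $\bk$ to be a splitting field for $G$ with $\#G$ invertible, whereas here $\bk$ is an arbitrary commutative noetherian ring (indeed $\Sigma$-degree is a purely functorial notion, not requiring $\bk$ to be a field at all). So first I would observe that $\Sigma$-degree is insensitive to extension of scalars and to the size of $\bk$: the shift functor $\Sigma$ commutes with $- \otimes_{\bk} \bk'$ for any ring map $\bk \to \bk'$, and more importantly the definition of $\Sigma$-degree only involves whether certain functors vanish outside finitely many isomorphism classes of objects (i.e. in finitely many degrees $n$), which can be checked after a faithfully flat base change. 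So it suffices to treat the case where $\bk$ is a splitting field for $G$ in which $\#G$ is invertible — but this last point (invertibility of $\#G$) is exactly what we cannot arrange by base change when $\chr \bk \mid \#G$.

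To handle arbitrary characteristic I would instead argue directly with the shift functor on $\FI_G$ without passing through Proposition~\ref{FIGrep}. Note that $\Sigma$ on $\FI_G$-modules is the restriction along the functor $\FI_G \to \FI_G$, $S \mapsto S \amalg \{*\}$, "twisted" so that $(\Sigma M)(S) = M(S \amalg \{*\})$ carries the induced $S_n \ltimes G^n$-action. The key structural input is that a finitely generated $\FI_G$-module is a quotient of a finite direct sum of principal projectives $P_x$, and since $\Sigma$ is exact and commutes with finite direct sums, it is enough to bound the $\Sigma$-degree of $P_x$ for $x = [d]$. For the ordinary $\FI$ principal projective $M(d) = P_{[d]}$, it is classical (see \cite[§2.2]{symc1} or \cite[Definition 4.10]{wahl} and the surrounding discussion) that $\Sigma M(d)$ decomposes as a sum of $M(d)$ with lower-degree pieces, giving finite $\Sigma$-degree. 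The analogous computation for $\FI_G$: $P_{[d]}(S)$ has basis the $G$-injections $[d] \to S$, and $\Sigma P_{[d]}(S) = P_{[d]}(S \amalg \{*\})$ has basis the $G$-injections $[d] \to S \amalg \{*\}$, which split into those missing $*$ (giving back $P_{[d]}(S)$) and those hitting $*$ (giving a sum of $|G|$ copies of $P_{[d-1]}(S)$, one for each value in $G$ of $\rho$ at the preimage of $*$). This yields a short exact sequence $0 \to P_{[d]} \to \Sigma P_{[d]} \to P_{[d-1]}^{\oplus |G|} \to 0$ of $\FI_G$-modules, hence by induction on $d$ (base case $d=0$: $\Sigma P_{[0]} = P_{[0]}$, so $\Sigma$-degree $0$) we get that $P_{[d]}$ has $\Sigma$-degree $\le d$, and the cokernel map $P_{[d]} \to \Sigma P_{[d]}$ is split injective so the kernel is $0$.

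From there the proposition follows formally: if $M$ is finitely generated, pick a surjection $\bigoplus_{j} P_{[d_j]} \surj M$ with finitely many summands, let $r = \max_j d_j$; since a quotient of a functor of $\Sigma$-degree $\le r$ has $\Sigma$-degree $\le r$ (this is part of the formalism of \cite[§4]{wahl}; concretely one checks the hereditary property that subquotients and extensions of finite-$\Sigma$-degree functors again have finite $\Sigma$-degree, using exactness of $\Sigma$ and the snake lemma applied to $F \to \Sigma F$), we conclude $M$ has $\Sigma$-degree $\le r$. The main obstacle I anticipate is being careful with the last sentence — the class of finite-$\Sigma$-degree functors is closed under quotients, but verifying this requires chasing the kernel/cokernel of $F \to \Sigma F$ through a presentation, and one should confirm that the relevant torsion (finitely-supported) functors genuinely have $\Sigma$-degree $-1$, i.e. that a finitely generated $\FI_G$-module supported in finitely many degrees really is killed by $\Sigma^k$ for $k \gg 0$ up to finite support, which is where finiteness of $G$ (bounding the number of $G$-maps, hence keeping everything finitely generated over a noetherian $\bk$) is used.
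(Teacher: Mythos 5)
Your computation of $\Sigma P_{[d]}$ is essentially the one the paper uses (it records $\Sigma P_S = P_S \oplus \bigoplus_{S'} \bk[G] \otimes_\bk P_{S'}$, the sum over the $d$ subsets $S' \subset S$ of size $d-1$; so the cokernel is $P_{[d-1]}^{\oplus d|G|}$ rather than $P_{[d-1]}^{\oplus |G|}$ --- a harmless miscount). The genuine gap is the step you yourself flag as the obstacle: the claim that a quotient of a functor of $\Sigma$-degree $\le r$ again has $\Sigma$-degree $\le r$. This is not proved in \cite[\S 4]{wahl}, and the snake-lemma argument you sketch is circular: for a surjection $F \to N$ with kernel $K$, the connecting map places $\ker(N \to \Sigma N)$ in an exact sequence whose third term is the cokernel of $K \to \Sigma K$, so bounding the degree of the quotient $N$ requires bounding the degree of the cokernel term for the \emph{subobject} $K$, which in turn requires the quotient case, and so on. (Indeed, the class of finite-$\Sigma$-degree functors is not closed under subobjects without finiteness hypotheses: a torsion module supported in infinitely many degrees has infinite $\Sigma$-degree.)

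The paper escapes this circle by inducting on the degree of generation rather than on $\Sigma$-degree, and by treating the kernel and cokernel of $M \to \Sigma M$ separately. For any $\FI_G$-module $M$ the kernel of $M \to \Sigma M$ is torsion; when $M$ is finitely generated the kernel is finitely generated by noetherianity (Proposition~\ref{prop:FIG-propertyF}), hence supported in finitely many degrees, hence of $\Sigma$-degree $-1$ by definition --- this disposes of the kernel once and for all, with no induction. For the cokernel one observes that $\Delta M = \operatorname{coker}(M \to \Sigma M)$ is a \emph{right-exact} functor of $M$, so your formula for $\Delta P_S$ shows that if $M$ is generated in degrees $\le d$ then $\Delta M$ is finitely generated in degrees $\le d-1$ (finiteness of $G$ enters here), and induction on $d$ finishes. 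So your projective computation is exactly the right input; what is missing is the right-exactness of $\Delta$ together with the torsion analysis of the kernel, in place of the unproved closure-under-quotients property.
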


\begin{proof}
Let $M$ be an $\FI_G$-module. The kernel of $M \to \Sigma M$ is torsion, i.e., all non-invertible morphisms act by zero. If $M$ is finitely generated, then the same is true for the kernel (Proposition~\ref{prop:FIG-propertyF}), and hence it is supported in finitely many degrees. It is easy to see that a torsion module  concentrated in degrees $\le d$ has $\Sigma$-degree $\le d$, so from now on, we only need to focus on the cokernel of $M \to \Sigma M$, which we denote $\Delta M$.

Note that $\Delta$ is right-exact. Also, 
\begin{align*}
\Sigma P_S &= P_S \oplus \bigoplus_{S' \subset S} \bk[G] \otimes_{\bk} P_{S'}\\
\Delta P_S &= \bigoplus_{S' \subset S} \bk[G] \otimes_{\bk} P_{S'},
\end{align*}
where $\# S'=\# S-1$ in both sums. Since $G$ is finite, we observe that if $M$ is finitely generated in degree $\le d$, then $\Delta M$ is finitely generated in degree $\le d-1$ (here we use that $G$ is finite). By induction, we are done.
\end{proof}

\begin{remark}
\begin{enumerate}[(a)]
  \item Proposition~\ref{prop:finiteG-polynomial} and the discussion above shows that Theorem~\ref{thm:wr-twisted-stab} follows from \cite[Theorem 4.20]{wahl} when $G$ is finite. When $G$ is trivial, we have been informed that Theorem~\ref{thm:wr-twisted-stab} is contained in work of Betley \cite{betley} and Church \cite{church} also with bounds for when stability occurs. By Proposition~\ref{prop:FIG-propertyF}, when $G$ is finite, the result also follows from this work. Our result is new when $G$ is infinite. 

\item Using \cite[Theorem B]{ramos}, the proof of Proposition~\ref{prop:finiteG-polynomial} can be adapted to work for any group $G$ if we replace finitely generated by the condition that it is presented in finite degree. In particular, if $G$ is polycyclic-by-finite, the statement of Proposition~\ref{prop:finiteG-polynomial} remains valid. Along the same lines, by \cite{palmer}, the result also holds for any $G$ in the special case when the $\FI_G$-module can be extended to an $\FI^\#_G$-module ($\FI^\#_G$ is a larger category where morphisms are partially defined $G$-injections between finite sets).
\qedhere
\end{enumerate}
\end{remark}

\subsection{Homotopy groups of configuration spaces} \label{ss:homotopy-conf}

Let $\FI(G)$ be the category of sets with a free $G$-action with finitely many orbits and injective $G$-equivariant maps. There is an equivalence $\FI_G \to \FI(G)$ defined by $S \mapsto S \times G$ (the $G$-action is $h \cdot (s,g) = (s,gh^{-1})$) and sending $(f,\sigma) \colon S \to T$ to the morphism $S \times G \to T \times G$ given by $(s,g) \mapsto (f(s), \sigma(s) g)$.

Let $M$ be a connected manifold with $\dim(M) \ge 3$. Fix $k \ge 2$ and set $G = \pi_1(M)$. Let $\wt{M}$ be the universal cover of $M$, so $G$ acts on $\wt{M}$ by deck transformations. Given a set $X$ with a free $G$-action, let $\Conf^G_X(\wt{M})$ be the space of injective $G$-equivariant maps. Also, for any set $S$, let $\Conf_S(M)$ be the space of injective maps. There is a natural map $\Conf_X^G(\wt{M}) \to \Conf_{X/G}(M)$ with fiber $G^{X/G}$. Since $\dim(M) \ge 3$, $G^{X/G} = \pi_1(\Conf_{X/G}(M))$ and hence $\Conf^G_X(\wt{M})$ is simply-connected.

Given an equivariant injective map of sets $X \to Y$ with free $G$-action, we have a forgetful map $\Conf^G_Y(\wt{M}) \to \Conf_X^G(\wt{M})$ and hence a map on homotopy groups $\pi_k(\Conf^G_Y(\wt{M})) \to \pi_k(\Conf_X^G(\wt{M}))$ (we have not chosen basepoints: each $\Conf^G_Z(\wt{M})$ is simply-connected, so there is a canonical isomorphism between the homotopy groups for any two choices of basepoint).

So we can define a functor $\FI(G)^{\rm op} \to \Mod_\bZ$ by $X \mapsto \pi_k(\Conf^G_Y(\wt{M}))$. Using the equivalence above, this is also a functor on $\FI_G^{\rm op}$. Now let $A$ be any abelian group. We define an $\FI_G$-module by $S \mapsto \hom(\pi_k(\Conf_{S \times G}^G(\wt{M})), A)$.

\begin{remark}
When $G = \pi_1(M)$ is trivial and $A = \bQ$, we get an $\FI$-module defined over $\bQ$, which is studied in \cite{kupers-miller}. In particular, they show that these are finitely generated $\FI$-modules. They prove a similar result when $A = \bZ$ and also for the functor $S \mapsto \ext^1_\bZ(\pi_k(\Conf_{S \times G}^G(\wt{M})), \bZ)$. It would be interesting to find more general conditions on $M$, $A$, and $k$ under which the above $\FI_G$-module is finitely generated.
\end{remark}

\section{Quasi-ordered languages} \label{ss:qordered}
Let $\Sigma$ be an {\bf alphabet} (i.e., a finite set) and let $\Sigma^\star$ be the free monoid generated by $\Sigma$. {\bf Words} are elements of $\Sigma^\star$ and {\bf languages} are subsets of $\Sigma^\star$. We write words as $w = w_1 \cdots w_n$ where $w_i \in \Sigma$; the length of the word is $n$. A subword of $w$ is a word of the form $w_{i_1} \cdots w_{i_k}$ where $1 \le i_1 < \cdots < i_k \le n$ ($k=0$ is allowed). We can also represent a word as a function $[n] \to \Sigma$ given by $i \mapsto w_i$; equivalently, we can write them as functions $I \to \Sigma$ where $I$ is ordered, this is convenient for defining subwords by restricting the function to $I \subset [n]$.

Let $\Lambda$ be a finite abelian group and let $\phi \colon \Sigma^\star \to \Lambda$ be a monoid homomorphism. Given a subset $S \subseteq \Lambda$, define 
\[
\Sigma^{\star}_{\phi,S} = \{w \in \Sigma^{\star} \mid \phi(w) \in S\}.
\]
We say that a language $\cL \subset \Sigma^{\star}$ is a {\bf congruence language} if it is of the form $\Sigma^{\star}_{\phi,S}$ for some $\Lambda$, $\phi$ and $S$. The {\bf modulus} of a congruence language is the exponent of the group $\Lambda$. (The {\bf exponent} of a group is the least common multiple of the orders of all of its elements.)

Let $F(\bt)$ be a formal power series in variables $\bt=(t_1, \ldots, t_r)$. An {\bf $N$-cyclotomic translate} of $F$ is a series of the form $F(\zeta_1 t_1, \ldots, \zeta_r t_r)$, where $\zeta_1, \ldots, \zeta_r$ are $N$th roots of unity.

\begin{lemma} \label{lem:cyc-trans}
Let $\Lambda$ be a finite abelian group of exponent $N$, let $S$ be a subset of $\Lambda$, and let $\psi \colon \bN^r \to \Lambda$ be a monoid homomorphism. Suppose that $F(\bt)=\sum_{\bn \in \bN^r} a_{\bn} \bt^{\bn}$ is a formal power series over $\bC$. Let $G(\bt) = \sum a_{\bn} \bt^{\bn}$, where the sum is over $\bn \in \bN^r$ such that $\psi(\bn) \in S$. Then $G(\bt)$ is a $\bQ(\zeta_N)$-linear combination of $N$-cyclotomic translates of $F(\bt)$.
\end{lemma}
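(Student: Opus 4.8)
The plan is to extract the coefficients supported on $\psi^{-1}(S)$ using characters of the finite abelian group $\Lambda$, exactly as one does when picking out an arithmetic progression of coefficients of a power series via roots of unity. First I would reduce to the case where $S$ is a singleton: since $G(\bt)$ for general $S$ is the sum of the corresponding series for each $\lambda \in S$, and a sum of $\bQ(\zeta_N)$-linear combinations of $N$-cyclotomic translates is again one, it suffices to treat $S = \{\lambda\}$ for a fixed $\lambda \in \Lambda$.

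Next I would introduce the character group $\widehat{\Lambda} = \Hom(\Lambda, \bC^\times)$, which has order $|\Lambda|$, and use the orthogonality relation: for any $\mu \in \Lambda$,
\[
\frac{1}{|\Lambda|} \sum_{\chi \in \widehat{\Lambda}} \chi(\mu)\overline{\chi(\lambda)} =
\begin{cases} 1 & \mu = \lambda \\ 0 & \mu \ne \lambda. \end{cases}
\]
Applying this with $\mu = \psi(\bn)$ and interchanging the (finite) sum over $\chi$ with the sum over $\bn$, I get
\[
G(\bt) = \frac{1}{|\Lambda|} \sum_{\chi \in \widehat{\Lambda}} \overline{\chi(\lambda)} \sum_{\bn \in \bN^r} a_{\bn}\, \chi(\psi(\bn))\, \bt^{\bn}.
\]
The key point is that $\chi \circ \psi \colon \bN^r \to \bC^\times$ is a monoid homomorphism, hence determined by its values $\zeta_i^{(\chi)} := \chi(\psi(e_i))$ on the standard generators $e_1, \dots, e_r$ of $\bN^r$; thus $\chi(\psi(\bn)) = \prod_i (\zeta_i^{(\chi)})^{n_i}$, and the inner sum is precisely $F(\zeta_1^{(\chi)} t_1, \dots, \zeta_r^{(\chi)} t_r)$, an $N$-cyclotomic translate of $F$ (each $\zeta_i^{(\chi)}$ is a value of a character of a group of exponent dividing $N$, hence an $N$th root of unity). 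So
\[
G(\bt) = \frac{1}{|\Lambda|} \sum_{\chi \in \widehat{\Lambda}} \overline{\chi(\lambda)}\; F\bigl(\zeta_1^{(\chi)} t_1, \dots, \zeta_r^{(\chi)} t_r\bigr).
\]
It remains to observe that the coefficients $\frac{1}{|\Lambda|}\overline{\chi(\lambda)}$ lie in $\bQ(\zeta_N)$: the values of characters of $\Lambda$ are $N$th roots of unity, $\overline{\zeta_N} = \zeta_N^{-1} \in \bQ(\zeta_N)$, and $|\Lambda|$ is invertible in $\bQ \subseteq \bQ(\zeta_N)$.

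I do not expect a serious obstacle here; the argument is the standard ``Fourier selection'' of coefficients, and the only things to be careful about are purely formal: that interchanging the two summations is legitimate (it is, since the sum over $\widehat{\Lambda}$ is finite, so this is just a finite linear combination of formal power series and no convergence issue arises), and that the conjugates $\overline{\chi(\lambda)}$ genuinely lie in $\bQ(\zeta_N)$ rather than some larger cyclotomic field (they do, because $\Lambda$ has exponent $N$, so every character takes values in $\mu_N \subset \bQ(\zeta_N)$). If anything needs mild care it is bookkeeping in the reduction to singletons and confirming that the class of $\bQ(\zeta_N)$-linear combinations of $N$-cyclotomic translates is closed under finite sums, both of which are immediate.
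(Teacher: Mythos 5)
Your proof is correct and follows essentially the same route as the paper: the paper writes the characteristic function of $S$ as a $\bQ(\zeta_N)$-linear combination of characters of $\Lambda$, which is exactly the Fourier/orthogonality expansion you carry out explicitly (after the harmless reduction to singletons). Your write-up simply supplies the details the paper leaves implicit, including the observation that $\chi\circ\psi$ is determined by its values on the standard generators, so the resulting series are $N$-cyclotomic translates.
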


\begin{proof}
We have $G(\bt)=\sum_{\bn \in \bN^r} \chi(\psi(\bn)) a_{\bn} \bt^{\bn}$, where $\chi \colon \Lambda \to \{0,1\}$ is the characteristic function of $S$. Now express $\chi$ as a $\bQ(\zeta_N)$-linear combination of characters of $\Lambda$.
\end{proof}

A {\bf norm} on $\Sigma$ is a monoid homomorphism $\nu \colon \Sigma^\star \to \bN^I$ for some set $I$ induced by a function $\Sigma \to I$ (i.e., if $x \in \Sigma$ then $\nu(x)$ is a standard basis vector). It is {\bf universal} if $\Sigma \to I$ is injective. A norm $\nu$ on a language $\cL \subseteq \Sigma^\star$ is the restriction of a norm on $\Sigma$ to $\cL$. 

\begin{proposition} \label{hilb-cong-intersect}
Let $\cL$ be a language on $\Sigma$ equipped with a universal norm $\nu$ with values in $\bN^I$, let $\cK$ be a congruence language on $\Sigma$ of modulus $N$, and let $\cL'=\cL \cap \cK$. Then $\rH_{\cL',\nu}(\bt)$ is a $\bQ(\zeta_N)$-linear combination of $N$-cyclotomic translates of $\rH_{\cL,\nu}(\bt)$.
\end{proposition}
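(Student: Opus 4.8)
The plan is to reduce the statement directly to Lemma~\ref{lem:cyc-trans}. The key observation I would make first is that the monoid homomorphism $\phi \colon \Sigma^\star \to \Lambda$ defining $\cK$ factors through the norm $\nu$. Since $\Lambda$ is abelian and $\phi$ is a monoid homomorphism, $\phi(w)$ depends only on the number of occurrences of each letter of $\Sigma$ in $w$; and since $\nu$ is \emph{universal}, exactly this data is recorded by $\nu(w) \in \bN^I$. Concretely, writing $\Sigma \hookrightarrow I$ for the injection inducing $\nu$, I would define $\psi \colon \bN^I \to \Lambda$ on standard basis vectors by $\psi(e_i) = \phi(x)$ when $i$ is the image of a letter $x \in \Sigma$, and $\psi(e_i) = 0$ otherwise. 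Then $\phi(w) = \psi(\nu(w))$ for every word $w$, because $\nu(w)$ is supported on the image of $\Sigma$ and its $x$-coordinate is the number of $x$'s in $w$.

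Granting this, $\cL' = \cL \cap \cK = \{w \in \cL \mid \psi(\nu(w)) \in S\}$. Writing $\rH_{\cL,\nu}(\bt) = \sum_{\bn \in \bN^I} a_{\bn} \bt^{\bn}$ with $a_{\bn} = \#\{w \in \cL \mid \nu(w) = \bn\} \in \bN \subset \bC$ (this is where the series is assumed to make sense), we get $\rH_{\cL',\nu}(\bt) = \sum_{\bn \colon \psi(\bn) \in S} a_{\bn} \bt^{\bn}$. I would then apply Lemma~\ref{lem:cyc-trans} with $r = |I|$, the homomorphism $\psi \colon \bN^I \to \Lambda$, the subset $S \subseteq \Lambda$, and $F = \rH_{\cL,\nu}$. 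Since the exponent of $\Lambda$ is by definition the modulus $N$ of $\cK$, the lemma yields exactly that $\rH_{\cL',\nu}(\bt)$ is a $\bQ(\zeta_N)$-linear combination of $N$-cyclotomic translates of $\rH_{\cL,\nu}(\bt)$, completing the argument.

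I do not expect a real obstacle here; the one point deserving care is the factorization $\phi = \psi \circ \nu$, which genuinely uses both the commutativity of $\Lambda$ (so that $\phi(w)$ sees only letter multiplicities) and the universality of $\nu$ (without it, two distinct letters could share an image in $I$ while having different images in $\Lambda$, and no such $\psi$ would exist). Everything else is purely formal manipulation of power series, so no convergence issues arise, and the interaction of $\bQ(\zeta_N)$-linear combinations and $N$-cyclotomic translates with the norm is automatic since each $\bt^{\nu(w)}$ is just a monomial.
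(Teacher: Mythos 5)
Your proposal is correct and is essentially the paper's own argument: both proofs factor $\phi$ as $\psi \circ \nu$ using universality of the norm, observe that $\rH_{\cL'}$ is obtained from $\rH_{\cL}$ by discarding the terms with $\psi(\bn) \notin S$, and then invoke Lemma~\ref{lem:cyc-trans}. Your explicit construction of $\psi$ on standard basis vectors just spells out a step the paper leaves implicit.
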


\begin{proof}
Choose $\phi \colon \Sigma \to \Lambda$ and $S \subset \Lambda$ so that $\cK=\Sigma^{\star}_{\phi,S}$. Since $\nu$ is universal, the map $\phi \colon \Sigma^{\star} \to \Lambda$ can be factored as $\psi \circ \nu$, where $\psi \colon \bN^I \to \Lambda$ is a monoid homomorphism. Thus if $\rH_{\cL}(\bt)=\sum_{\bn \in \bN^I} a_{\bn} \bt^{\bn}$, then $\rH_{\cL'}(\bt)$ is obtained by simply discarding the terms for which $\psi(\bn) \not\in S$. The result now follows from Lemma~\ref{lem:cyc-trans}.
\end{proof}

Recall from \cite[\S 5.3]{catgb} that an {\bf ordered language} on $\Sigma$ is a language obtained from the singleton languages and the languages $\Pi^\star$ for $\Pi \subseteq \Sigma$, using finite union and concatenation. 
A {\bf quasi-ordered language} (of modulus $N$) is the intersection of an ordered language and a congruence language (of modulus $N$); we say that the language is ${\rm QO}_N$. The class of quasi-ordered languages is not closed under unions, intersections, or concatenations, but here is a positive result (we will not use regular languages, so see \cite[\S 5.2]{catgb} for the definition):

\begin{proposition}
\begin{enumerate}[\rm (a)]
\item Congruence languages are regular.
\item Quasi-ordered languages are regular. 
\end{enumerate}
\end{proposition}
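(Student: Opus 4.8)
The plan is to prove both parts by appeal to closure properties of regular languages, which are standard and recalled in \cite[\S 5.2]{catgb}. The key facts I would use are: (i) finite languages are regular; (ii) for $\Pi \subseteq \Sigma$, the language $\Pi^\star$ is regular; (iii) regular languages are closed under finite unions, concatenations, and (crucially) intersections. Part (b) then follows immediately from part (a) once we know that ordered languages are regular, since a quasi-ordered language is by definition the intersection of an ordered language and a congruence language, and the intersection of two regular languages is regular. So the real content is part (a), together with the observation that ordered languages are regular (the latter is essentially built into the definition: an ordered language is obtained from singleton languages and the $\Pi^\star$ by finite unions and concatenations, and each of these operations preserves regularity).

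For part (a), I would argue that a congruence language $\Sigma^\star_{\phi,S}$ is recognized by a finite automaton (or equivalently a finite monoid). The cleanest route: the monoid homomorphism $\phi \colon \Sigma^\star \to \Lambda$ factors through the syntactic monoid, and $\Sigma^\star_{\phi,S} = \phi^{-1}(S)$ is the preimage of a subset of the \emph{finite} monoid $\Lambda$ under a monoid homomorphism. By the standard characterization (Myhill--Nerode, or the fact that a language is regular iff it is recognized by a finite monoid), any language of the form $\phi^{-1}(S)$ with $\Lambda$ finite is regular. Concretely, one builds a deterministic finite automaton whose state set is $\Lambda$, with initial state the identity $e \in \Lambda$, transition function $\lambda \mapsto \lambda \cdot \phi(a)$ on reading letter $a \in \Sigma$, and accepting states exactly $S$; after reading a word $w$ this automaton is in state $\phi(w)$, so it accepts precisely $\Sigma^\star_{\phi,S}$. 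Hence congruence languages are regular.

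I do not anticipate a serious obstacle here; the statement is essentially a packaging of elementary automata theory, and the only mild subtlety is making sure the direction of multiplication in the transition function matches the convention for $\phi$ (a left/right action bookkeeping issue, harmless since $\Lambda$ is abelian anyway). The one point worth stating carefully is that in part (b) we must invoke closure of regular languages under intersection, not just union and concatenation — that is the nontrivial closure property among the three, but it is a classical result (product construction on automata). So the proof will be short: (a) exhibit the automaton above; (b) note that ordered languages are regular by the closure of regular languages under the operations used to build them, and then $\cL \cap \cK$ is regular by part (a) and closure under intersection.

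\begin{proof}
(a) Write $\cK = \Sigma^\star_{\phi,S}$ for a finite abelian group $\Lambda$, a monoid homomorphism $\phi \colon \Sigma^\star \to \Lambda$, and a subset $S \subseteq \Lambda$. Consider the deterministic finite automaton with state set $\Lambda$, initial state the identity element $e$, accepting states $S$, and transition $\lambda \xmapsto{\,a\,} \lambda \cdot \phi(a)$ for $a \in \Sigma$. After reading a word $w = w_1 \cdots w_n$ this automaton is in state $\phi(w_1) \cdots \phi(w_n) = \phi(w)$, so it accepts $w$ if and only if $\phi(w) \in S$, i.e., if and only if $w \in \cK$. Thus $\cK$ is regular.

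(b) By definition, an ordered language is built from the singleton languages and the languages $\Pi^\star$ (for $\Pi \subseteq \Sigma$) using finite unions and concatenations. Finite languages are regular, each $\Pi^\star$ is regular, and the class of regular languages is closed under finite union and concatenation \cite[\S 5.2]{catgb}; hence every ordered language is regular. A quasi-ordered language is, by definition, the intersection $\cL \cap \cK$ of an ordered language $\cL$ and a congruence language $\cK$. Both are regular (the first by the previous sentence, the second by part (a)), and the class of regular languages is closed under intersection, so $\cL \cap \cK$ is regular.
\end{proof}
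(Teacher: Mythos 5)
Your proof is correct and follows essentially the same approach as the paper: both arguments for (a) come down to a deterministic finite automaton with state set $\Lambda$ (the paper factors $\phi$ through $[r]^\star$ and invokes closure of regular languages under inverse images of monoid homomorphisms, whereas you build the automaton on $\Sigma$ directly, which is if anything slightly cleaner), and (b) is in both cases the observation that ordered languages are regular together with closure of regular languages under intersection.
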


\begin{proof}
(a) Pick an isomorphism $\Lambda \cong \bZ/n_1 \times \cdots \times \bZ/n_r$. Then $\phi$ can be factored as $\Sigma^\star \to [r]^\star \to \Lambda$ where $[r]$ denotes the set of generators for $\Lambda$ under the above isomorphism. The inverse image of $S$ under $[r]^\star \to \Lambda$ consists of all words where the $r$-tuple of the multiplicities of each $i \in [r]$ taken modulo $n_i$ is one of finitely many values. It is clear that this can be encoded by a DFA with $\#\Lambda$ many states (see \cite[\S 5.2]{catgb}), so is a regular language. Hence $\Sigma^\star_{\phi, S}$ is the inverse image of a regular language in $[r]^\star$ under a monoid homomorphism, and thus is regular \cite[Theorem 3.5]{hopcroftullman}.

(b) Ordered languages are regular by \cite[Theorem 5.3.1]{catgb}. Now use the fact that the intersection of regular languages is regular.
\end{proof}

\begin{definition} \label{defn:pecK}
Let $N \ge 1$ be an integer. We say that $h \in \bQ \lbb t_1, \ldots, t_r \rbb$ is of class $\cK_N$ if it can be written as $f(\bt)/g(\bt)$ where $f$ and $g$ are polynomials in the $t_i$ with coefficients in $\bQ(\zeta_N)$ and $g$ factors as $\prod_{k=1}^n (1-\lambda_i)$, where $\lambda_i$ is a $\bZ[\zeta_N]$-linear combination of the $t_i$. 
\end{definition}

Our main result on quasi-ordered languages is the following theorem. 

\begin{theorem} \label{qordhilb}
Let $\cL$ be a quasi-ordered language of modulus $N$ equipped with a norm valued in $\bN^I$. Then $\rH_{\cL}(\bt)$ is of class $\cK_N$.
\end{theorem}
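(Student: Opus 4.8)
The plan is to reduce the statement to a combination of three facts already available: (1) ordered languages have rational Hilbert series of a specific shape, proved in \cite{catgb}; (2) intersecting with a congruence language only replaces the Hilbert series by a $\bQ(\zeta_N)$-linear combination of $N$-cyclotomic translates, which is Proposition~\ref{hilb-cong-intersect}; and (3) the class $\cK_N$ is stable under $N$-cyclotomic translates and $\bQ(\zeta_N)$-linear combinations. The key point is that an $N$-cyclotomic translate $t_i \mapsto \zeta_i t_i$ sends a denominator factor $1-\lambda_k$ (with $\lambda_k$ a $\bZ[\zeta_N]$-linear combination of the $t_i$) to $1-\lambda_k'$ where $\lambda_k'$ is again a $\bZ[\zeta_N]$-linear combination of the $t_i$, since $\zeta_i \in \bZ[\zeta_N]$; and it sends the numerator, a polynomial over $\bQ(\zeta_N)$, to another such polynomial. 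So each translate of a class-$\cK_N$ series is again of class $\cK_N$ with the \emph{same} denominator up to rescaling the $\lambda_k$; a $\bQ(\zeta_N)$-linear combination of finitely many such series can then be put over a common denominator of the required form by clearing denominators (the product of all the $1-\lambda_k$'s appearing is still a product of factors $1-\lambda$ with $\lambda$ a $\bZ[\zeta_N]$-combination of the $t_i$), absorbing the $\bQ(\zeta_N)$ scalars into the numerator.

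So the first step is to record Fact (3) as a lemma: the set of power series of class $\cK_N$ is closed under $\bQ(\zeta_N)$-linear combinations and under $N$-cyclotomic translates. This is the routine bookkeeping just described. The second step is to invoke \cite[\S 5.3]{catgb} (the analogue of Theorem~\ref{qordhilb} for ordered languages, without roots of unity) to conclude that for an ordered language $\cM$ with a norm valued in $\bN^I$, $\rH_{\cM}(\bt)$ is of class $\cK_1$, hence a fortiori of class $\cK_N$. The third step: given a quasi-ordered language $\cL = \cM \cap \cK$ of modulus $N$, with $\cM$ ordered and $\cK$ a congruence language of modulus $N$, apply Proposition~\ref{hilb-cong-intersect} to write $\rH_{\cL}(\bt)$ as a $\bQ(\zeta_N)$-linear combination of $N$-cyclotomic translates of $\rH_{\cM}(\bt)$. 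Then by Fact (3) and Step 2, $\rH_{\cL}(\bt)$ is of class $\cK_N$.

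One technical wrinkle to attend to is that Proposition~\ref{hilb-cong-intersect} is phrased for a \emph{universal} norm on $\cL$, whereas Theorem~\ref{qordhilb} allows an arbitrary norm valued in $\bN^I$ (induced by a not-necessarily-injective map $\Sigma \to I$). The fix is a change of variables: factor the given norm $\Sigma^\star \to \bN^I$ through the universal norm $\Sigma^\star \to \bN^\Sigma$, i.e., write the chosen function $\Sigma \to I$ and observe that specializing the universal variables $(s_x)_{x\in\Sigma}$ via $s_x \mapsto t_{\iota(x)}$ carries $\rH_{\cL,\mathrm{univ}}(\bs)$ to $\rH_{\cL,\nu}(\bt)$. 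Since this substitution is $\bZ$-linear in the variables, it preserves class $\cK_N$ (a denominator factor $1-\lambda$ with $\lambda$ a $\bZ[\zeta_N]$-combination of the $s_x$ goes to $1-\lambda'$ with $\lambda'$ a $\bZ[\zeta_N]$-combination of the $t_i$, and numerators stay over $\bQ(\zeta_N)$). So one runs the argument with the universal norm and then specializes at the end.

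The main obstacle is essentially nil at the level of ideas — all the substance is imported from \cite{catgb} and from Proposition~\ref{hilb-cong-intersect} — so the only thing requiring care is the closure lemma for $\cK_N$, specifically checking that clearing denominators across finitely many cyclotomic translates does not break the ``$g = \prod(1-\lambda_k)$ with $\lambda_k$ a $\bZ[\zeta_N]$-linear combination'' normal form. This works because the set of such factors is multiplicatively closed in the obvious sense (we just take the union of the factors of all the individual denominators), and the numerator is allowed arbitrary $\bQ(\zeta_N)$ coefficients, so the linear-combination scalars cause no trouble.
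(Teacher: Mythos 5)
Your proposal is correct and follows essentially the same route as the paper, whose entire proof is to combine \cite[Theorem 5.3.7]{catgb} (the ordered-language case) with Proposition~\ref{hilb-cong-intersect}. The closure of class $\cK_N$ under cyclotomic translates and $\bQ(\zeta_N)$-linear combinations, and the reduction to the universal norm, are exactly the routine points the paper leaves implicit, and your verification of them is sound.
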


\begin{proof}
This follows immediately from \cite[Theorem 5.3.7]{catgb} and Proposition~\ref{hilb-cong-intersect}. 
\end{proof}

\begin{remark}
For later applications, it will be convenient to formulate a coordinate-free version of $\cK_N$. Suppose that $\Xi$ is a finite rank free $\bZ$-module and $f \in \SYM(\Xi_{\bQ})$. We say that $f$ is $\cK_N$ if there is a $\bZ$-basis $t_1, \ldots, t_r$ of $\Xi$ so that $f$ is $\cK_N$ as a series in the $t_i$. This is independent of the choice of $\bZ$-basis of $\Xi$.
\end{remark}

Suppose that $\cC$ is directed and normed over $\bN^I$ and pick an object $x$ of $\cC$. We define a norm on $\vert S_x \vert$ as follows: given $f \in \vert S_x \vert$, let $\wt{f} \in S_x(y)$ be a lift, and put $\nu(f)=\nu(y)$. This is well-defined because $\cC$ is directed: if $\wt{f}' \in S_x(z)$ is a second lift, then $y$ and $z$ are isomorphic. 

A {\bf ${\rm QO}_N$-lingual structure} on $\vert S_x \vert$ is a pair $(\Sigma, i)$ consisting of a finite alphabet $\Sigma$ normed over $\bN^I$ and an injection $i \colon \vert S_x \vert \to \Sigma^{\star}$ compatible with the norms, i.e., such that $\nu(i(f))=\nu(f)$ and such that for every poset ideal $J$ of $\vert S_x \vert$, the language $i(J)$ is ${\rm QO}_N$. Following \cite[Definition 6.3.1]{catgb}, a category $\cC$ is {\bf ${\rm QO}_N$-lingual} if $\vert S_x\vert$ admits a ${\rm QO}_N$-lingual structure for all objects $x \in \cC$.

\begin{theorem} \label{hilbthm}
Let $\cC$ be a $\mathrm{QO}_N$-lingual Gr\"obner category and let $M$ be a finitely generated representation of $\cC$. Then $\rH_M(\bt)$ is $\cK_N$, i.e., is a rational function $f(\bt)/g(\bt)$, where $f(\bt)$ and $g(\bt)$ are polynomials with coefficients in $\bQ(\zeta_N)$ and $g(\bt)$ factors as $\prod_{j=1}^n (1-\lambda_j)$ and each $\lambda_j$ is a $\bZ[\zeta_N]$-linear combination of the $t_i$.
\end{theorem}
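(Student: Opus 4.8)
The plan is to mimic the proof of the analogous rationality statement for ordinary Gr\"obner categories from \cite{catgb} (namely \cite[Theorem~6.3.2]{catgb}, which is the $N=1$ case), replacing ``lingual'' by ``${\rm QO}_N$-lingual'' and ``$\cK$'' by ``$\cK_N$'' throughout, and checking that the key closure properties still hold. First I would reduce from a finitely generated module to a principal projective: every finitely generated $M$ admits a finite presentation by principal projectives, so by the long exact sequence and additivity of Hilbert series it suffices to treat $M = P_x$ for a single object $x$, together with the observation that $\rH_{P_x}(\bt) = \rH_{\vert S_x\vert, \nu}(\bt) := \sum_{f \in \vert S_x \vert} \bt^{\nu(f)}$ (using that $\bk$ is a field and $P_x(y) = \bk[S_x(y)]$, grouped by isomorphism class of $y$), and that the class $\cK_N$ is closed under addition and subtraction (clear from Definition~\ref{defn:pecK}, clearing denominators).

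Next I would invoke the ${\rm QO}_N$-lingual structure $(\Sigma, i)$ on $\vert S_x \vert$. Since $\cC$ is Gr\"obner, $\vert S_x\vert$ is a noetherian poset, and $i$ identifies it (as a normed set) with a language $\cL = i(\vert S_x \vert) \subseteq \Sigma^\star$; hence $\rH_{\vert S_x\vert,\nu}(\bt) = \rH_{\cL,\nu}(\bt)$. If $\cL$ itself were ${\rm QO}_N$ we would be done immediately by Theorem~\ref{qordhilb}. In general $\cL$ need not be ${\rm QO}_N$ on the nose; the definition only guarantees that $i(J)$ is ${\rm QO}_N$ for poset ideals $J$. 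So the step I expect to be the main obstacle is the combinatorial one of passing from ``every proper ideal maps to a ${\rm QO}_N$ language'' to ``the whole thing has $\cK_N$ Hilbert series''. In the $N=1$ case of \cite{catgb} this is handled by a noetherian-induction / well-ordering argument on the poset of ideals: one uses that $\vert S_x\vert$ is noetherian to write it as a finite union of ``intervals'' above generators, or one directly runs the argument in \cite[\S 6.3]{catgb} showing the Hilbert series of a noetherian ${\rm QO}_N$-lingual poset is a finite $\bZ$-combination of Hilbert series of ${\rm QO}_N$ languages. The point is that each such piece is ${\rm QO}_N$, so Theorem~\ref{qordhilb} applies to it, and $\cK_N$ is closed under the finitely many additions and subtractions involved.

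Concretely, I would argue as follows. Because $\vert S_x\vert$ is noetherian, it has finitely many minimal elements; more usefully, one runs the exact argument of \cite[Theorem~6.3.2]{catgb}, which shows by noetherian induction that for a noetherian lingual (here ${\rm QO}_N$-lingual) poset $P$, $\rH_{P}(\bt)$ lies in the span over $\bQ(\zeta_N)$ of Hilbert series of the languages $i(J)$ for various ideals $J$, each of which is ${\rm QO}_N$ and hence has $\cK_N$ Hilbert series by Theorem~\ref{qordhilb}; the induction terminates precisely because $P$ is noetherian. Since only finitely many such $J$ occur and $\cK_N$ is closed under $\bQ(\zeta_N)$-linear combinations (one may take a common denominator of the form $\prod(1-\lambda_j)$ with $\lambda_j$ a $\bZ[\zeta_N]$-combination of the $t_i$, since a product of such denominators is again of this form), we conclude $\rH_{\vert S_x\vert}(\bt) = \rH_M(\bt)$ is $\cK_N$. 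I would remark that no new ideas beyond \cite[\S 6.3]{catgb} are needed: the only modification is that the atomic building blocks are now quasi-ordered rather than ordered languages, and Theorem~\ref{qordhilb} supplies the base case, while Proposition~\ref{hilb-cong-intersect} is what makes the cyclotomic coefficients and the factored-denominator shape propagate correctly.
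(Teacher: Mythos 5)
Your proposal is correct and takes essentially the same route as the paper, whose entire proof is the citation of \cite[Theorem~6.3.2]{catgb} (stated there for an arbitrary class of languages, so it applies verbatim with the quasi-ordered languages of modulus $N$ as the class) combined with Theorem~\ref{qordhilb} and the closure of $\cK_N$ under finite $\bZ$-linear combinations. The only slip is your opening reduction: a finite presentation $P_1 \to P_0 \to M \to 0$ does not make Hilbert series additive over principal projectives, since the image of $P_1$ is a submodule of $P_0$ whose Hilbert series is exactly what the initial-module/poset-ideal decomposition inside \cite[Theorem~6.3.2]{catgb} computes --- but you correctly defer to that argument for the main step, so nothing essential is missing.
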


\begin{proof}
This follows directly from \cite[Theorem~6.3.2]{catgb} and Theorem~\ref{qordhilb}.
\end{proof}

A normed category $\cC$ is {\bf strongly QO${}_N$-lingual} if for each object $x$ there exists a ${\rm QO}_N$-lingual structure $i \colon \vert S_x \vert \to \Sigma^{\star}$ and a congruence language $\cK$ on $\Sigma$ such that for every poset ideal $J$ of $\vert S_x \vert$, the language $i(J)$ is the intersection of an ordered language with $\cK$. (If we drop the adjective ``strongly,'' then the congruence language $\cK$ is allowed to depend on the ideal $J$.) We have the following variant of \cite[Proposition 6.3.3]{catgb}:

\begin{proposition} \label{prop:prod-QON}
Let $\cC_1$ and $\cC_2$ be strongly $\mathrm{QO}_N$-lingual normed categories. Suppose that the posets $\vert \cC_{1,x} \vert$ and $\vert \cC_{2,y} \vert$ are noetherian for all $x$ and $y$. Then $\cC_1 \times \cC_2$ is strongly $\mathrm{QO}_N$-lingual.
\end{proposition}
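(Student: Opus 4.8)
The plan is to imitate the proof of \cite[Proposition 6.3.3]{catgb}, which presumably handles the analogous statement for ordered languages and lingual categories, and check that the extra congruence data can be carried along. Recall that for a product category, $S_{(x_1,x_2)}$ on $\cC_1 \times \cC_2$ is naturally identified with $S_{x_1} \times S_{x_2}$, and hence $|S_{(x_1,x_2)}|$ is a sub-poset of (in fact, under the noetherianity hypothesis, essentially all of) the product poset $|S_{x_1}| \times |S_{x_2}|$. So the first step is to fix objects $x_1 \in \cC_1$ and $x_2 \in \cC_2$, take strongly $\mathrm{QO}_N$-lingual structures $i_j \colon |S_{x_j}| \to \Sigma_j^\star$ with associated congruence languages $\cK_j$ on $\Sigma_j$, and build a candidate lingual structure on $|S_{(x_1,x_2)}|$.

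The natural construction is the ``shuffle'' or interleaving alphabet: set $\Sigma = \Sigma_1 \sqcup \Sigma_2$ (disjoint union, with the inherited norm), and map a pair $(f_1,f_2)$ to some canonical shuffle of $i_1(f_1)$ and $i_2(f_2)$ in $\Sigma^\star$. This is exactly what is done in the proof of \cite[Proposition 6.3.3]{catgb}, where one checks that the image of a poset ideal $J$ of the product is an ordered language on $\Sigma$ (built by concatenation/union from ordered pieces of $\Sigma_1$ and $\Sigma_2$). The second step is to produce the single congruence language $\cK$ on $\Sigma$: take $\cK = \{w \in \Sigma^\star \mid w|_{\Sigma_1} \in \cK_1,\ w|_{\Sigma_2} \in \cK_2\}$, where $w|_{\Sigma_j}$ denotes the subword of letters from $\Sigma_j$. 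Concretely, if $\cK_j = (\Sigma_j)^\star_{\phi_j, S_j}$ for $\phi_j \colon \Sigma_j^\star \to \Lambda_j$, then $\cK$ is cut out by the homomorphism $\phi \colon \Sigma^\star \to \Lambda_1 \times \Lambda_2$ sending $\Sigma_1$-letters via $\phi_1$ (and to $0$ in $\Lambda_2$) and vice versa, with $S = S_1 \times S_2$. So $\cK$ is genuinely a congruence language on $\Sigma$, of modulus $\mathrm{lcm}$ of the two moduli, which divides $N$ (one may also just enlarge to modulus exactly $N$ harmlessly). The third step is the verification: for a poset ideal $J$ of $|S_{(x_1,x_2)}|$, one shows $i(J)$ equals the intersection of an ordered language with this fixed $\cK$. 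Since $i(J)$ is a shuffle of pieces of $i_1(J_1)$-type ordered languages with pieces of $i_2(J_2)$-type ordered languages, and a shuffled word lies in $\cK$ precisely when its $\Sigma_1$-part and $\Sigma_2$-part separately satisfy the respective congruence conditions, the intersection with $\cK$ enforces exactly the $\cK_1$- and $\cK_2$-conditions on the two halves — which is what the strong $\mathrm{QO}_N$-lingual structures on $\cC_1$ and $\cC_2$ guarantee.

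I would organize the write-up as: (i) recall the product identification of $S_x$ and reduce to checking the lingual-structure axioms, citing \cite[Proposition 6.3.3]{catgb} for the ordered-language part so as not to redo that combinatorics; (ii) define $\Sigma$, the shuffle map $i$, and the congruence language $\cK$; (iii) observe $\cK$ is a congruence language of modulus dividing $N$ via the product homomorphism $\phi_1 \times \phi_2$; (iv) for a poset ideal $J$, write $i(J)$ via the shuffle description and check $i(J) = (\text{ordered language}) \cap \cK$ by matching the $\Sigma_1$- and $\Sigma_2$-subwords against $\cK_1$ and $\cK_2$. The main obstacle I anticipate is purely bookkeeping: making the ``shuffle of two languages'' precise and verifying that intersecting the shuffled ordered language with $\cK$ does not accidentally impose congruence constraints linking the two factors (it does not, because $\phi$ treats $\Sigma_1$ and $\Sigma_2$ independently and $S = S_1 \times S_2$ is a product) and does not under-constrain (it doesn't, since every word of $i(J)$ decomposes uniquely into its $\Sigma_1$- and $\Sigma_2$-subwords). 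Given that \cite[Proposition 6.3.3]{catgb} already contains the hard combinatorial core, the congruence bookkeeping is the only genuinely new content, and it is routine once the product homomorphism $\phi_1 \times \phi_2$ is written down.
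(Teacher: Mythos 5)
Your proposal is correct and follows essentially the same route as the paper's proof: the disjoint-union alphabet, the product congruence language $\cK = \Sigma^\star_{\phi_1\times\phi_2,\,S_1\times S_2}$ independent of the ideal, the key identity $(\cL_1\cap\cK_1)(\cL_2\cap\cK_2)=\cL_1\cL_2\cap\cK$ (the paper uses concatenation as the canonical ``shuffle''), and the noetherianity-based decomposition of an ideal into a finite union of products of principal ideals.
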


\begin{proof}
Let $x_j$ be an object of $\cC_j$, and let $i_j \colon \vert \cC_{j,{x_j}} \vert \to \Sigma_j^{\star}$ be a strongly ${\rm QO}_N$-lingual structure at $x_j$. Let $\Sigma=\Sigma_1 \amalg \Sigma_2$, normed over $\bN^I$ in the obvious manner. Let $\cK_1$ and $\cK_2$ be the given congruence languages of modulus $N$ on $\Sigma_1$ and $\Sigma_2$, regarded as languages on $\Sigma$. Write $\cK_i=(\Sigma_i^{\star})_{\phi_i, S_i}$, where $\phi_i \colon \Sigma_i \to \Lambda_i$. Let $\phi \colon \Sigma \to \Lambda_1 \oplus \Lambda_2$ be the map defined by $\phi(x)=(\phi_1(x), 0)$ for $x \in \Sigma_1$, and $\phi(x)=(0, \phi_2(x))$ for $x \in \Sigma_2$. Let $S=S_1 \times S_2$, and let $\cK=\Sigma^{\star}_{\phi, S}$. Then $\cK$ is a congruence language on $\Sigma$ of modulus $N$ and has the following property: if $\cL_1$ and $\cL_2$ are any languages on $\Sigma_1$ and $\Sigma_2$, then $(\cL_1 \cap \cK_1)(\cL_2 \cap \cK_2)=\cL_1 \cL_2 \cap \cK$. 

The following diagram commutes:
\begin{displaymath}
\xymatrix{
\vert \cC_{(x_1,x_2)} \vert \ar@{=}[r] &
\vert \cC_{1,{x_1}} \vert \times \vert \cC_{2,x_2} \vert \ar[r]^-{i_1 \times i_2} \ar[d] & \Sigma_1^{\star} \times \Sigma_2^{\star} \ar[r] \ar[d] & \Sigma^{\star} \ar[d] \\
& \bN^{I_1} \oplus \bN^{I_2} \ar@{=}[r] & \bN^{I_1} \oplus \bN^{I_2} \ar@{=}[r] & \bN^I }
\end{displaymath}
The top right map is concatenation of words. We let $i \colon \vert \cC_{(x_1,x_2)} \vert \to \Sigma^\star$ be the composition along the first line, which is clearly injective. We claim that this is a strongly ${\rm QO}_N$-lingual structure on $\vert \cC_{(x_1,x_2)} \vert$. The commutativity of the above diagram shows that it is a lingual structure. Now suppose $J$ is an ideal of $\vert \cC_{(x_1,x_2)} \vert$. Since this poset is noetherian (being a direct product of noetherian posets), $J$ is a finite union of principal ideals $J_1, \ldots, J_n$. Each $J_j$ is of the form $T_j \times T_j'$, where $T_j$ is an ideal of $\cC_{1,x_1}$ and $T_j'$ is an ideal of $\cC_{2,x_2}$. 

By assumption, $i_1(T_j)$ is the intersection of an ordered language with $\cK_1$, while $i_2(T_j')$ is the intersection of an ordered language with $\cK_2$. It follows from \cite[Theorem 5.3.1]{catgb} that the direct product of two ordered languages on disjoint alphabets is also ordered, so $i(J_j)$ is the intersection of an ordered language with $\cK$. By definition, finite unions of ordered languages are ordered, thus $i(J)$ is the intersection of an ordered language with $\cK$.
\end{proof}

\section{Categories of weighted surjections} \label{sec:weighted-surj}

In this section we study a generalization of the category of finite sets with surjective functions by considering weighted sets. This is preparatory material for the next section.

\subsection{The categories $\OWS_{\Lambda}$ and $\FWS_{\Lambda}$} \label{sec:OWS-FWS}

Let $\Lambda$ be a finite abelian group. A {\bf weighting} on a set $S$ is a function $\phi \colon S \to \Lambda$. A {\bf weighted set} is a set equipped with a weighting; we write $\phi_S$ to denote the weighting if we need to distinguish between sets. Suppose that $\phi$ is a weighting on $S$, and let $f \colon S \to T$ be a function. We define $f_*(\phi)$ to be the weighting on $T$ given by $f_*(\phi)(y) = \sum_{x \in f^{-1}(y)} \phi(x)$. A {\bf map of weighted sets} $S \to T$ is a {\it surjective} function $f \colon S \to T$ such that $f_*(\phi_S)=\phi_T$. In particular, if there is a morphism $S \to T$, then $\#S \ge \#T$. We let $\FWS_{\Lambda}$ denote the category of finite weighted sets.

We require an ordered version of the category as well. Let $\OWS_{\Lambda}$ be the following category. The objects are finite ordered weighted sets. The order and weighting are not required to interact in any way. A morphism $S \to T$ is a surjective function $f \colon S \to T$ such that $f_*(\phi_S)=\phi_T$, and for all $x<y$ in $T$ we have $\min f^{-1}(x) < \min f^{-1}(y)$.

We define a norm $\nu$ on $\OWS_{\Lambda}$ taking values in $\bN^\Lambda$ by defining $\nu(S, \phi_S)$ to be the function $\lambda \mapsto \#\phi_S^{-1}(\lambda)$ (thought of as an element of $\bN^\Lambda$). Our main result about $\OWS_{\Lambda}$ is:

\begin{theorem} \label{thm:ows}
The category $\OWS_{\Lambda}^{\op}$ is Gr\"obner and strongly $\mathrm{QO}_N$-lingual, where $N$ is the exponent of $\Lambda$.
\end{theorem}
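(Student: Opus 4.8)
The plan is to adapt the proof of the analogous result for $\OS^{\op}$ from \cite[\S7.4]{catgb} (the category of finite ordered sets with surjections), handling the weights via an intersection with a congruence language. I would begin by recalling the basic combinatorial encoding: a surjection $f\colon S\surj T$ out of a fixed ordered weighted set $[n]$ with a chosen weighting can be recorded by the word $w=w_1\cdots w_n$ over an alphabet $\Sigma$ whose letters track, for each position $i\in[n]$, the ``block'' of $f^{-1}$ it lies in together with enough data to recover the induced weighting on $T$; following \cite{catgb} one uses the alphabet encoding the sequence of block-indices, normalized so that blocks are opened in increasing order (this is exactly what the condition $\min f^{-1}(x)<\min f^{-1}(y)$ for $x<y$ buys us). To keep the alphabet finite and the norm universal one records a letter for ``first occurrence of a new block of weight $\lambda$'' and ``repeat of an already-opened block,'' so $\Sigma$ is built from $\Lambda$ and a bounded amount of bookkeeping.

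The key steps, in order, are as follows. First, I would fix an object $x=([n],\phi)$ of $\OWS_\Lambda^{\op}$ and make precise the bijection between $|S_x|$ and a set of words in $\Sigma^\star$, checking that it is compatible with the norm $\nu$ valued in $\bN^\Lambda$ (a repeat letter has norm zero, a ``new block of weight $\lambda$'' letter has norm the basis vector $e_\lambda$, so the total norm of the word is $\nu$ of the target weighted set). Second, I would verify that $\OWS_\Lambda^{\op}$ is Gr\"obner: one equips $S_x(y)$ with the lexicographic (or ShortLex) well-order on the associated words exactly as in \cite[\S7.4]{catgb}, checks that morphisms in $\OWS_\Lambda^{\op}$ induce strictly order-preserving maps, and checks that $|S_x|$ is a noetherian poset — this last point follows because the order on words refines the subword order after accounting for the constraint structure, and \cite[\S7.4]{catgb} already does the weight-free case; the weights only cut down the poset, so noetherianity is inherited (or one invokes Higman's lemma on the enriched alphabet). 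Third, for the strongly $\mathrm{QO}_N$-lingual statement: a poset ideal $J$ of $|S_x|$ corresponds, under the encoding, to a downward-closed set of words; in the weight-free setting \cite[\S7.4]{catgb} shows the corresponding language is an \emph{ordered} language, and I would check that this argument is ``weight-oblivious'' in the sense that imposing the weighting amounts to intersecting with a single fixed congruence language $\cK$ on $\Sigma$ — namely the one recording that the total $\Lambda$-weight of the word (computed via the monoid homomorphism $\Sigma^\star\to\Lambda$ sending each letter to its weight) equals the appropriate element, or more precisely that $\cK$ encodes the compatibility $f_*(\phi_S)=\phi_T$. Since $\cK$ is fixed independently of $J$, and $\cK$ has modulus dividing $N=$ exponent of $\Lambda$, the structure is \emph{strongly} $\mathrm{QO}_N$-lingual. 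Finally I would note that the map $i$ so constructed is injective and norm-compatible, completing the verification of Definition~\cite[6.3.1]{catgb} and its strong variant.

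The main obstacle I anticipate is step three: pinning down exactly \emph{one} congruence language $\cK$ that works uniformly for all ideals $J$, rather than a $J$-dependent one. The subtlety is that the weighting constraint $f_*(\phi_S)=\phi_T$ is not literally a constraint on the total weight of the word $w$ (that total is determined by $\phi=\phi_{[n]}$ and is the same for all $f$) but rather a constraint coupling \emph{which} positions get grouped into which blocks with the \emph{declared} weight of that block in the letter. So the encoding must be set up so that a letter ``new block of declared weight $\lambda$'' carries $\lambda$ as data, and then $\cK$ is the congruence language over $\Sigma$ asserting a consistency relation; verifying that this $\cK$ is genuinely a congruence language (i.e., of the form $\Sigma^\star_{\psi,S}$ for a monoid homomorphism $\psi$ to a finite abelian group of exponent $N$) and that intersecting the ambient ordered language with $\cK$ recovers exactly the words encoding honest $\OWS_\Lambda^{\op}$-morphisms is where the real care is needed. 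Once the encoding is chosen correctly, the Gr\"obner part and the ordered-language part are essentially citations to \cite{catgb} with the enriched alphabet, and Proposition~\ref{prop:prod-QON} is available if one prefers to first handle a ``product'' decomposition of $\Sigma$ into the weight-data part and the block-structure part.
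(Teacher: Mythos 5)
Your overall architecture (encode a surjection out of $[n]$ as a word over a finite alphabet built from $[n]$ and $\Lambda$, realize the weight constraint as a single congruence language of modulus $N$, and describe poset ideals as intersections of ordered languages with that fixed congruence language) matches the paper's. But there is a genuine gap at the step you dispatch most quickly: noetherianity of the poset $\vert S_x \vert$. The relevant order is $\frac{s}{\sigma} \le \frac{t}{\tau}$ iff there is an ordered surjection $f$ with $t = f^*(s)$ \emph{and} $\sigma = f_*(\tau)$, i.e.\ the weights in each fiber must sum to the prescribed value. Adding the weight condition does not ``cut down the poset'' to a subset of the weight-free poset: it keeps (essentially) the same underlying set and \emph{removes comparabilities}, and a poset obtained from a noetherian one by deleting relations need not be noetherian (an infinite antichain is a ``cut-down'' of $\bN$). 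Likewise Higman's lemma on the enriched alphabet governs the subword order, which is not the order in play here. This is exactly where the paper does its real work (Proposition~\ref{prop:sigmastar-noeth}): a Nash--Williams minimal-bad-sequence argument in which, from a sufficiently long common tail of the words, one extracts positions carrying the same letter of $[n]$ whose weights sum to $0$ in $\Lambda$ (Lemma~\ref{lem:sum-5}, by pigeonhole), deletes them to get strictly smaller words, and then shows the deletions can be undone compatibly with witnesses (Lemmas~\ref{lem:sum-1}--\ref{lem:sum-4}) to contradict badness. Nothing in your outline supplies this, and the inheritance claim as stated is false.

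A second gap is in the ideal description. Even with the paper's alphabet $\Sigma = [n] \times \Lambda$, the principal ideal generated by $\frac{s}{\sigma}$ is \emph{not} the intersection of the single ordered language $(\frac{s_1}{\sigma_1})\Pi_1^\star \cdots (\frac{s_n}{\sigma_n})\Pi_n^\star$ with $\cK_\theta$: a word $\frac{r}{\rho} \ge \frac{s}{\sigma}$ need not carry exactly the weights $\sigma_i$ at the minima of its fibers, only the correct fiberwise sums. The paper resolves this by introducing the finitely many words ``minimal over $\frac{s}{\sigma}$'' and taking the union of their ordered languages before intersecting with $\cK_\theta$ (Lemmas~\ref{lem:ows-2}--\ref{lem:ows-4}); you correctly flagged this as the delicate point but did not supply the device that makes it work. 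Finally, your norm convention (repeat letters have norm zero) does not reproduce the norm $\nu(y) = (\lambda \mapsto \#\phi_y^{-1}(\lambda))$ of the target, which counts every element of $y$; every letter must carry the weight of its own element, as in the paper's encoding.
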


We prove this in the next section, and now use it to study $\FWS_{\Lambda}$.

\begin{theorem} \label{thm:FWS-QG}
The category $\FWS_{\Lambda}^{\op}$ is quasi-Gr\"obner.
\end{theorem}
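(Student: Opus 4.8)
The plan is to deduce this from Theorem~\ref{thm:ows} using the machinery of Proposition~\ref{potgrob}: I want to exhibit a Gr\"obner category together with an essentially surjective functor to $\FWS_\Lambda^{\op}$ satisfying property~(F). The natural candidate is the forgetful functor $\Phi \colon \OWS_\Lambda^{\op} \to \FWS_\Lambda^{\op}$ that forgets the order on the underlying set (keeping the weighting). Since $\OWS_\Lambda^{\op}$ is Gr\"obner by Theorem~\ref{thm:ows}, and $\Phi$ is clearly essentially surjective (every finite weighted set admits an ordering), it remains only to check that $\Phi$ satisfies property~(F); Proposition~\ref{potgrob} then immediately gives that $\FWS_\Lambda^{\op}$ is quasi-Gr\"obner.

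So the content is the verification of property~(F) for $\Phi \colon \OWS_\Lambda^{\op} \to \FWS_\Lambda^{\op}$. Fix an object $x$ of $\FWS_\Lambda^{\op}$, i.e.\ a finite weighted set $(x,\phi_x)$. A morphism $x \to \Phi(y)$ in $\FWS_\Lambda^{\op}$ is (by definition of the opposite category) a morphism $\Phi(y) \to x$ in $\FWS_\Lambda$, that is, a weighting-preserving surjection $g \colon y \to x$. I would proceed as in Lemma~\ref{oigF}: choose once and for all some total order $\le_0$ on $x$, giving an object $\ol{x}$ of $\OWS_\Lambda$ with $\Phi(\ol{x}) = x$ as weighted sets. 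Given a surjection $g \colon y \to x$ of weighted sets, I want to factor the corresponding morphism through finitely many ``universal'' ones. The key observation is that there are only finitely many total orders $\preceq$ on the finite set $x$, and for each such order, $(x,\preceq)$ is an object of $\OWS_\Lambda$ (the order and weighting need not interact) equipped with a tautological morphism in $\OWS_\Lambda^{\op}$ to $\ol{x} = (x,\le_0)$, namely the identity function of $x$, which is order-related only in the trivial sense required. Concretely: the identity map $\mathrm{id}_x \colon (x,\le_0) \to (x,\preceq)$ is a morphism in $\OWS_\Lambda$ whenever — hmm, but the identity is only an $\OWS_\Lambda$-morphism if it respects the ``$\min$ of fibers'' condition, which for a bijection forces $\le_0 = \preceq$. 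The right move instead is: the set of order-preserving-up-to-permutation data.

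Let me restructure the property~(F) verification. For the fixed object $x$ of $\FWS_\Lambda^{\op}$ with $n = \#x$, take the finite list of objects $y_\sigma$ of $\OWS_\Lambda^{\op}$ indexed by the permutations $\sigma$ of $[n]$ (or rather by total orders on $x$): set $y_\sigma = (x, \preceq_\sigma)$ with $\preceq_\sigma$ ranging over all $n!$ total orders on $x$, each carrying the weighting $\phi_x$. For each, let $f_\sigma \colon x \to \Phi(y_\sigma) = x$ be the identity function, which is a morphism $x \to \Phi(y_\sigma)$ in $\FWS_\Lambda^{\op}$ since it corresponds to the identity surjection $x \to x$ in $\FWS_\Lambda$. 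Now given any morphism $f \colon x \to \Phi(y)$ in $\FWS_\Lambda^{\op}$, i.e.\ a weighting-preserving surjection $g \colon y \to x$ of ordinary sets, equip $y$ with its given order and define $\preceq_\sigma$ on $x$ to be the order in which $z <_y z'$ whenever $\min g^{-1}(z) <_y \min g^{-1}(z')$ — this is a total order on $x$ (fibers are disjoint and nonempty), so it equals $\preceq_\sigma$ for exactly one $\sigma$. By construction $g \colon y \to (x,\preceq_\sigma)$ satisfies the $\min$-of-fibers condition, hence is a morphism in $\OWS_\Lambda$, i.e.\ defines a morphism $h_\sigma \colon y_\sigma \to y$ in $\OWS_\Lambda^{\op}$ with $\Phi(h_\sigma) = g = f$, and $f = \Phi(h_\sigma) \circ f_\sigma$ since $f_\sigma$ is the identity. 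This is exactly property~(F). I expect the only mild obstacle to be bookkeeping around opposite categories and the direction of the surjections; once those are pinned down the argument is routine. I would write it up as:

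\begin{proof}
Consider the functor $\Phi \colon \OWS_{\Lambda}^{\op} \to \FWS_{\Lambda}^{\op}$ induced by the functor $\OWS_\Lambda \to \FWS_\Lambda$ that forgets the order on the underlying set but retains the weighting. This is essentially surjective, since every finite weighted set admits some total order. By Theorem~\ref{thm:ows}, the category $\OWS_{\Lambda}^{\op}$ is Gr\"obner; by Proposition~\ref{potgrob}, it therefore suffices to show that $\Phi$ satisfies property~(F).

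Fix an object $x$ of $\FWS_\Lambda^{\op}$, i.e.\ a finite weighted set $(x,\phi_x)$, and let $n = \#x$. For each total order $\preceq$ on the set $x$, let $y_\preceq$ denote the object of $\OWS_\Lambda^{\op}$ given by $x$ with weighting $\phi_x$ and order $\preceq$, and let $f_\preceq \colon x \to \Phi(y_\preceq)$ be the identity function of $x$; this is a morphism in $\FWS_\Lambda^{\op}$ since it corresponds to the identity surjection $x \to x$ in $\FWS_\Lambda$, which is weighting-preserving. There are only finitely many choices of $\preceq$ (namely $n!$ of them), so it remains to check the lifting property.

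Let $f \colon x \to \Phi(y)$ be any morphism in $\FWS_\Lambda^{\op}$, with $y$ an object of $\OWS_\Lambda^{\op}$; unwinding the opposite categories, $f$ is a weighting-preserving surjection $g \colon y \to x$ of ordinary finite sets, where $y$ carries some total order $\le_y$. Since $g$ is surjective, its fibers are nonempty and pairwise disjoint, so we may define a total order $\preceq$ on $x$ by declaring $z \preceq z'$ if and only if $\min{}_{\le_y} g^{-1}(z) \le_y \min{}_{\le_y} g^{-1}(z')$. With this choice, $g$ satisfies the condition that $\min g^{-1}(z) < \min g^{-1}(z')$ whenever $z \prec z'$, and hence $g$ is a morphism $y_\preceq \to y$ in $\OWS_\Lambda$; equivalently, $g$ defines a morphism $h \colon y_\preceq \to y$ in $\OWS_\Lambda^{\op}$ with $\Phi(h) = g = f$. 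Since $f_\preceq$ is the identity function of $x$, we have $f = \Phi(h) \circ f_\preceq$. This verifies property~(F) for $\Phi$, and completes the proof.
\end{proof}
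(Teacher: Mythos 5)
Your proposal is correct and is essentially the paper's own argument, just written out in more detail: the paper's proof likewise uses the forgetful functor $\OWS_\Lambda^{\op} \to \FWS_\Lambda^{\op}$ and the observation that any surjection $y \to x$ induces a unique ordering on $x$ (via minima of fibers) making it an ordered surjection, so that property~(F) holds with the finitely many orderings of $x$ as the universal objects. Your unwinding of the opposite-category bookkeeping and the factorization $f = \Phi(h)\circ f_\preceq$ is accurate.
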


\begin{proof}
The forgetful functor $\Phi \colon \OWS_\Lambda^\op \to \FWS_\Lambda^\op$ is easily seen to satisfy property~(F): given $T \in \OWS$, $S \in \FWS$, and $f \colon \Phi(T) \to S$, there is a unique ordering on $S$ so that $f$ is order-preserving, so $(\Phi^\op)^*(P_S) \cong \bigoplus P_{S'}$ where the sum is over all choices $S'$ of orderings on $S$. So the result follows from Theorem~\ref{thm:ows}.
\end{proof}

\begin{corollary} \label{fws-noeth}
If $\bk$ is left-noetherian then $\Rep_\bk(\FWS_\Lambda^\op)$ is noetherian.
\end{corollary}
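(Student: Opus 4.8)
The plan is to derive this immediately from the quasi-Gröbner property established just above. By Theorem~\ref{thm:FWS-QG}, the category $\FWS_\Lambda^\op$ is quasi-Gröbner, and Theorem~\ref{grobnoeth} asserts that for any left-noetherian ring $\bk$, the representation category $\Rep_\bk(\cC)$ of a quasi-Gröbner category $\cC$ is noetherian. Applying this with $\cC = \FWS_\Lambda^\op$ gives the claim directly.

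Concretely, first I would recall that $\FWS_\Lambda^\op$ is quasi-Gröbner: the forgetful functor $\Phi \colon \OWS_\Lambda^\op \to \FWS_\Lambda^\op$ is essentially surjective and satisfies property~(F), and $\OWS_\Lambda^\op$ is Gröbner by Theorem~\ref{thm:ows}. Then I would invoke Theorem~\ref{grobnoeth} to conclude noetherianity over any left-noetherian $\bk$. No further argument is needed.

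There is essentially no obstacle here; all the substantive work is contained in Theorem~\ref{thm:ows} (proved in the next section) and in the general machinery of \cite{catgb} recalled in \S\ref{sec:background}. The only thing to be careful about is that Theorem~\ref{grobnoeth} requires merely a left-noetherian coefficient ring (not, say, a field), which is exactly the hypothesis in the statement, so the implication is clean.

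\begin{proof}
By Theorem~\ref{thm:FWS-QG}, the category $\FWS_\Lambda^\op$ is quasi-Gr\"obner. The result now follows from Theorem~\ref{grobnoeth}.
\end{proof}
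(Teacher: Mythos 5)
Your proof is correct and matches the paper's (implicit) argument exactly: Corollary~\ref{fws-noeth} is an immediate consequence of Theorem~\ref{thm:FWS-QG} combined with Theorem~\ref{grobnoeth}. Nothing further is needed.
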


We now give a generalization of the above theorem that we will need in our analysis of $\FS_G^{\op}$. Let $M$ be a finitely generated $(\FWS_{\Lambda_1}^{\op} \times \cdots \times \FWS_{\Lambda_r}^{\op})$-module. Enumerate $\Lambda_i$ as $\{\lambda_{i,j}\}$, and let $t_{i,j}$ be a formal variable corresponding to $\lambda_{i,j}$. Given $\bn \in \bN^{\# \Lambda_i}$, let $[\bn]$ be the $\Lambda_i$-weighted set where $n_j$ elements have weight $\lambda_{i,j}$. When $\bk$ is a field, define the Hilbert series of $M$ by
\begin{displaymath}
\rH_M(\bt) = \sum_{\bn(1), \ldots, \bn(r)} C_{\bn(1)} \cdots C_{\bn(r)} \dim_\bk M([\bn(1)], \ldots, [\bn(r)]) \cdot \bt_1^{\bn(1)} \cdots \bt_r^{\bn(r)},
\end{displaymath}
where for $\bn \in \bN^k$ we write $C_{\bn}$ for the multinomial coefficient 
\[
C_\bn = \frac{\vert \bn \vert!}{\bn !} = \frac{\vert \bn \vert!}{n_1! \cdots n_k!}.
\]
The reason for the multinomial coefficient is in the proof of the next theorem, which is the main result we need in our applications.

\begin{theorem} \label{FSWhilb}
Let $M$ be a finitely generated $(\FWS_{\Lambda_1}^{\op} \times \cdots \times \FWS_{\Lambda_r}^{\op})$-module and assume $\bk$ is a field. Then $\rH_M(\bt)$ is $\cK_N$ {\rm (}see Definition~\ref{defn:pecK}{\rm )} where $N$ is the exponent of $\Lambda_1 \times \cdots \times \Lambda_r$.
\end{theorem}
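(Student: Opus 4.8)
The plan is to reduce Theorem~\ref{FSWhilb} to the single-factor case, which is essentially governed by Theorem~\ref{thm:ows}, and then account for the multinomial coefficients combinatorially. First I would handle the case $r = 1$. Here a finitely generated $\FWS_\Lambda^\op$-module $M$ is, by Theorem~\ref{thm:FWS-QG}, noetherian, and more to the point we may pull it back along the forgetful functor $\Phi \colon \OWS_\Lambda^\op \to \FWS_\Lambda^\op$, which satisfies property~(F); so $\Phi^* M$ is a finitely generated $\OWS_\Lambda^\op$-module. Since $\OWS_\Lambda^\op$ is Gr\"obner and strongly $\mathrm{QO}_N$-lingual (Theorem~\ref{thm:ows}), Theorem~\ref{hilbthm} applies: the Hilbert series $\rH_{\Phi^*M,\nu}(\bt)$ computed with the norm $\nu$ taking $(S,\phi_S)$ to $(\#\phi_S^{-1}(\lambda))_\lambda \in \bN^\Lambda$ is of class $\cK_N$. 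The point is then to compare this with $\rH_M(\bt)$. For a fixed weighted set $[\bn]$ with underlying cardinality $|\bn|$, the fiber $\Phi^{-1}([\bn])$ over it, up to isomorphism in $\OWS_\Lambda$, consists of the orderings of $[\bn]$ that are compatible with the weighting in the sense that each isomorphism type of ordered weighted set is counted once; since all orderings of a set of size $|\bn|$ are equivalent under relabeling but the weighting breaks this symmetry, the number of distinct ordered weighted sets mapping to $[\bn]$ is exactly the multinomial coefficient $C_\bn = |\bn|!/\bn!$. Because $\dim_\bk M$ is constant on isomorphism classes and $\Phi^* M([n]) = M(\Phi([n]))$, we get $\rH_{\Phi^*M,\nu}(\bt) = \sum_\bn C_\bn \dim_\bk M([\bn]) \bt^\bn = \rH_M(\bt)$. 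This is precisely why the multinomial coefficient was inserted into the definition of $\rH_M$. Hence $\rH_M(\bt)$ is $\cK_N$ when $r = 1$.

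For general $r$, I would argue that $\FWS_{\Lambda_1}^\op \times \cdots \times \FWS_{\Lambda_r}^\op$ behaves well under products. The forgetful functor is the product $\Phi_1 \times \cdots \times \Phi_r \colon \OWS_{\Lambda_1}^\op \times \cdots \times \OWS_{\Lambda_r}^\op \to \FWS_{\Lambda_1}^\op \times \cdots \times \FWS_{\Lambda_r}^\op$, and a product of functors each satisfying property~(F) again satisfies property~(F) (this follows by an easy direct argument, or one can use the machinery of \S\ref{ss:rep}); essential surjectivity is clear. Therefore $(\Phi_1 \times \cdots \times \Phi_r)^* M$ is a finitely generated module over the product of the $\OWS_{\Lambda_i}^\op$. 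Each $\OWS_{\Lambda_i}^\op$ is Gr\"obner with noetherian posets $|S_x|$, and strongly $\mathrm{QO}_N$-lingual with $N$ the exponent of $\Lambda_i$ (hence also strongly $\mathrm{QO}_{N'}$-lingual for $N' = \mathrm{lcm}$ of the exponents, i.e.\ the exponent of $\Lambda_1 \times \cdots \times \Lambda_r$, since a $\mathrm{QO}_N$-lingual structure is a fortiori $\mathrm{QO}_{N'}$-lingual when $N \mid N'$). So by Proposition~\ref{grobprod} the product category is Gr\"obner, and by iterating Proposition~\ref{prop:prod-QON} it is strongly $\mathrm{QO}_{N'}$-lingual. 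Applying Theorem~\ref{hilbthm} to the product category gives that the Hilbert series of $(\Phi_1 \times \cdots \times \Phi_r)^* M$ with respect to the product norm $\bN^{\Lambda_1} \oplus \cdots \oplus \bN^{\Lambda_r}$ is $\cK_{N'}$. The same bookkeeping as in the $r = 1$ case — now the fiber over $([\bn(1)], \ldots, [\bn(r)])$ has size $C_{\bn(1)} \cdots C_{\bn(r)}$ — identifies this series with $\rH_M(\bt)$, completing the proof.

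The main obstacle I anticipate is the fiber-counting / Hilbert-series comparison step: one must be careful about what ``isomorphism class in $\OWS_\Lambda$'' means (the order is genuine structure, so two orderings giving order-isomorphic weighted sets are identified), and verify that the number of isomorphism classes of ordered weighted sets over a given weighted set $[\bn]$ is exactly $C_\bn$, and that the norm $\nu$ on $\OWS_\Lambda$ pushes forward correctly to the weight-multidegree on $\FWS_\Lambda$. Once this dictionary is pinned down, everything else is a formal consequence of results already in the excerpt: property~(F) for the forgetful functor (already used in the proof of Theorem~\ref{thm:FWS-QG}), stability of the Gr\"obner and strongly-$\mathrm{QO}_N$-lingual conditions under products (Propositions~\ref{grobprod} and~\ref{prop:prod-QON}), the fact that $\mathrm{QO}_N$-lingual implies $\mathrm{QO}_{N'}$-lingual for $N \mid N'$, and the Hilbert-series output of Theorem~\ref{hilbthm}. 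A minor additional point worth checking is that pulling back a finitely generated module along an essentially surjective property-(F) functor yields a finitely generated module, which is exactly Proposition~\ref{propFfg}.
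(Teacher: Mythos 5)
Your proposal is correct and follows essentially the same route as the paper: pull back along the forgetful functor (which satisfies property~(F)), use Propositions~\ref{grobprod} and~\ref{prop:prod-QON} to make the product of the $\OWS_{\Lambda_i}^{\op}$ Gr\"obner and strongly $\mathrm{QO}_N$-lingual, apply Theorem~\ref{hilbthm}, and match Hilbert series via the count of $C_{\bn}$ isomorphism classes of orderings over $[\bn]$. Your explicit checks (that $\mathrm{QO}_{N_i}$-lingual implies $\mathrm{QO}_N$-lingual for $N_i \mid N$, and the fiber-counting for the multinomial coefficients) are details the paper leaves implicit, and they are handled correctly.
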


\begin{proof}
Using Theorem~\ref{thm:ows}, $\OWS_{\Lambda_1}^\op \times \cdots \times \OWS_{\Lambda_r}^\op$ is Gr\"obner by Proposition~\ref{grobprod}, and is (strongly) ${\rm QO}_N$-lingual by Proposition~\ref{prop:prod-QON}. Now the result follows from Theorem~\ref{thm:ows} using a version of the functor $\Phi$ from the proof of Theorem~\ref{thm:FWS-QG}. There are $C_{\bn}$ isomorphism classes in $\OWS_{\Lambda_i}$ which map to the isomorphism class of $[\bn]$ in $\FWS_{\Lambda_i}$; thus, with the multinomial coefficients, we have $\rH_M=\rH_{\Phi^*(M)}$.
\end{proof}

\subsection{Proof of Theorem~\ref{thm:ows}}

Fix a finite set $L$ and let $\Sigma=L \times \Lambda$. Given $a \in L$ and $\alpha \in \Lambda$, we write $\frac{a}{\alpha}$ for the corresponding element of $\Sigma$. We denote elements of $\Sigma^\star$ by $\frac{s}{\sigma}$, where $s \in L^{\star}$ and $\sigma \in \Lambda^\star$ are words of equal length. For $a \in L$, we define $w_a \colon \Sigma^{\star} \to \Lambda$ by
\begin{displaymath}
w_a \left( \frac{s_1 \cdots s_n}{\sigma_1 \cdots \sigma_n} \right) = \sum_{s_i=a} \sigma_i.
\end{displaymath}
We let $\bw \colon \Sigma^\star \to \Lambda^L$ be $(w_a)_{a \in L}$. Note that $w_a$ and $\bw$ are monoid homomorphisms. For $\theta \in \Lambda^L$, we let $\cK_{\theta}$ be the set of all $\frac{s}{\sigma} \in \Sigma^\star$ with $\bw(\frac{s}{\sigma})=\theta$. This is a congruence language of modulus $N$ (the exponent of $\Lambda$).

We now define a partial order on $\Sigma^{\star}$. Let $\frac{s}{\sigma} \colon [n] \to \Sigma$ and $\frac{t}{\tau} \colon [m] \to \Sigma$ be two words. Define $\frac{s}{\sigma} \le \frac{t}{\tau}$ if there exists an ordered surjection $f \colon [m] \to [n]$ (i.e., $i < j$ implies $\min f^{-1}(i) < \min f^{-1}(j)$ for all $i,j \in [n]$) such that $t=f^*(s)$ and $\sigma=f_*(\tau)$, i.e., $\sigma(i) = \sum_{j \in f^{-1}(i)} \tau(j)$ for $i \in [n]$; we say $f$ {\bf witnesses} $\frac{s}{\sigma} \le \frac{t}{\tau}$. Note that if $\frac{s}{\sigma} \le \frac{t}{\tau}$ then $\bw(\frac{s}{\sigma})=\bw(\frac{t}{\tau})$.

Let $x=\frac{s}{\sigma}$ be a word in $\Sigma^{\star}$. We say that an index $i$ is \emph{special} for $x$ if $s_j \ne s_i$ for any $j<i$, that is, the letter $s_i$ appears nowhere before $i$ in $s$. 

\begin{lemma} \label{lem:sum-1}
Let $x \le y$ be two words in $\Sigma^{\star}$ of lengths $n$ and $m$. Suppose that the final letters of $x$ and $y$ are equal, say to $\frac{a}{\alpha}$. Suppose furthermore that $n$ is special for $x$ if and only if $m$ is special for $y$. Then we can find an ordered surjection $f \colon [m] \to [n]$ witnessing $x \le y$ and satisfying $f^{-1}(n)=\{m\}$.
\end{lemma}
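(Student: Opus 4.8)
The plan is to start from an arbitrary ordered surjection $f_0 \colon [m] \to [n]$ witnessing $x \le y$ and to modify it into a witness $f$ with $f^{-1}(n) = \{m\}$. Write $x = \frac{s}{\sigma}$ and $y = \frac{t}{\tau}$, so that $s_n = t_m = a$ and $\sigma_n = \tau_m = \alpha$, and set $A = f_0^{-1}(n)$. Every $j \in A$ satisfies $t_j = s_{f_0(j)} = s_n = a$, and $\sum_{j \in A} \tau_j = \sigma_n = \alpha$. First I would dispose of the case where $n$ is special for $x$: then $a$ occurs in $s$ only at position $n$, and by the hypothesis $m$ is special for $y$, so $a$ occurs in $t$ only at position $m$; since every $j \in A$ has $t_j = a$, this forces $A \subseteq \{m\}$, hence $A = \{m\}$ by surjectivity, and $f_0$ itself works.

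So assume $n$ is not special for $x$, and let $k < n$ be the largest index with $s_k = a$ (it exists). I would then split on the value $f_0(m)$. If $f_0(m) = n$, so $m \in A$, then $\sum_{j \in A \setminus \{m\}} \tau_j = \sigma_n - \tau_m = 0$ in $\Lambda$; define $f$ to agree with $f_0$ except that $f(j) = k$ for every $j \in A \setminus \{m\}$. If instead $f_0(m) = i^* < n$, then the fiber $f_0^{-1}(i^*)$ contains some element besides $m$ --- otherwise $\min f_0^{-1}(i^*) = m$, which would have to be the largest of the $n$ strictly increasing fiber-minima, forcing $i^* = n$ --- and moreover $s_{i^*} = t_m = a$; define $f$ to agree with $f_0$ except that $f(j) = i^*$ for every $j \in A$ and $f(m) = n$. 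In both cases $f^{-1}(n) = \{m\}$ by construction, so it remains to check that $f$ is an ordered surjection witnessing $x \le y$.

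These checks all exploit $\sigma_n = \tau_m = \alpha$. For $\sigma = f_*(\tau)$: in the first case $A \setminus \{m\}$ has total weight $0$, so relocating it to fiber $k$ changes no fiber-weight, while fiber $n$ becomes $\{m\}$ of weight $\tau_m = \alpha = \sigma_n$; in the second case fiber $i^*$ loses $m$ (weight $\alpha$) and gains $A$ (weight $\alpha$), a wash, and again fiber $n$ becomes $\{m\}$ of weight $\alpha$. For $t = f^*(s)$: every relocated index has $t$-value $a$ and is sent to a position ($k$, $i^*$, or $n$) whose $s$-value is $a$. For the ordered-surjection property: the fibers gaining elements ($k$, resp.\ $i^*$) gain only elements of $A = f_0^{-1}(n)$, all of which exceed the minimum of the receiving fiber (which in the second case is not affected by also removing $m$, since that fiber is not a singleton); hence no fiber-minimum drops, and the only fiber-minimum that changes at all is that of fiber $n$, rising to $m$, so, $m$ being the maximal element, the chain of fiber-minima stays strictly increasing. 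Surjectivity is clear.

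I expect the main obstacle to be simply executing this last paragraph carefully --- verifying the weight condition, the letter condition, and the ordering condition for the modified map simultaneously --- since the two cases $f_0(m) = n$ and $f_0(m) < n$ require genuinely different relocations. The one conceptual point that makes everything fit is the observation that the equal final weights $\sigma_n = \tau_m = \alpha$ leave the relevant pieces weight-balanced, so that the last fiber can be trimmed down to $\{m\}$ without disturbing $\sigma = f_*(\tau)$.
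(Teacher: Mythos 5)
Your proposal is correct and follows essentially the same route as the paper's proof: dispose of the doubly-special case, then split on whether $f_0(m)=n$ or $f_0(m)<n$, and in each case relocate the weight-zero (resp.\ weight-balanced) part of the last fiber to an earlier fiber whose letter is also $a$, checking that fiber-minima are undisturbed. The verifications of the weight, letter, and ordering conditions match those in the paper, so there is nothing to add.
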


\begin{proof}
Write $x=\frac{s}{\sigma}$ and $y=\frac{t}{\tau}$, and let $g \colon [m] \to [n]$ be an ordered surjection witnessing $x \le y$, so that $g^*(s)=t$ and $g_*(\tau)=\sigma$. If $a$ appears only once in $x$ and $y$ then $g^{-1}(n)=\{m\}$, so we can take $f=g$. Suppose now that $a$ appears at least twice in both $x$ and $y$. We consider two cases.

First suppose that $g(m)=n$. Write $g^{-1}(m)=S \amalg \{m\}$ for some $S \subset [m]$. We note that $\sum_{i \in S} \tau_i=0$. Let $k<n$ be such that $s_k=a$. Define $f \colon [m] \to [n]$ by $f(i)=g(i)$ for $i \not\in S$ and $f(i)=k$ for $i \in S$. Then $f^{-1}(n)=\{m\}$. It is clear that $f$ is a surjection and satisfies $f^*(s)=t$. We have $\min f^{-1}(i)=\min g^{-1}(i)$ for all $i \ne n$, and so $f$ is ordered since $g$ is. Finally, $f_*(\tau)=\sigma$ since $\tau$ sums to 0 over $S$. Thus $f$ witnesses $x \le y$.

Now suppose that $g(m)=k \ne n$. Write $g^{-1}(k)=S \amalg \{m\}$ and $g^{-1}(n)=T$. Since $\min(S \amalg \{m\})<\min(T)$, it follows that $S$ is non-empty and $\min(S)<\min(T)$. We have $\tau_m+\sum_{i \in S} \tau_i=\sigma_k$ and $\sum_{i \in T} \tau_i=\sigma_n$. Since $\tau_m=\sigma_n$ by assumption, we see that $\sum_{i \in S \cup T} \tau_i=\sigma_k$. Define $f \colon [m] \to [n]$ by $f(i)=g(i)$ for $i \not\in T \cup \{m\}$, $f(i)=k$ for $i \in T$, and $f(m)=n$. Then $f^{-1}(n)=\{m\}$. We have already explained that $f_*(\tau)=\sigma$, and so the reasoning of the previous paragraph shows that $f$ is an ordered surjection witnessing $x \le y$.
\end{proof}

\begin{lemma} \label{lem:sum-2}
Let $x \le y$ be two words in $\Sigma^{\star}$ of lengths $n$ and $m$, and let $0 \le r \le n$. Suppose that:
\begin{enumerate}[\rm \indent (a)]
\item The words formed from the final $r$ letters of $x$ and $y$ are equal.
\item For $i=0,\ldots,r-1$, the index $n-i$ is special for $x$ if and only if $m-i$ is for $y$.
\end{enumerate}
Then we can find an ordered surjection $f \colon [m] \to [n]$ witnessing $x \le y$ and satisfying $f^{-1}(n-i)=\{m-i\}$ for $i=0,\ldots,r-1$.
\end{lemma}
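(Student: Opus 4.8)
The plan is to induct on $r$, using Lemma~\ref{lem:sum-1} both for the base case and as the engine of the inductive step. For $r\le 1$ there is nothing new: when $r=0$ the conclusion is just that \emph{some} ordered surjection witnesses $x\le y$, which is the hypothesis; and when $r=1$ it is precisely Lemma~\ref{lem:sum-1}.

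So assume $r\ge 2$ and that the lemma holds with $r-1$ in place of $r$. The $i=0$ instances of (a) and (b) say that the final letters of $x$ and $y$ coincide and that $n$ is special for $x$ if and only if $m$ is special for $y$, so Lemma~\ref{lem:sum-1} yields an ordered surjection $f_0\colon[m]\to[n]$ witnessing $x\le y$ with $f_0^{-1}(n)=\{m\}$. Let $x'$ and $y'$ be the words obtained from $x$ and $y$ by deleting the last letter, of lengths $n-1$ and $m-1$. Because $f_0^{-1}(n)=\{m\}$, the restriction $f_0|_{[m-1]}$ is an ordered surjection $[m-1]\to[n-1]$ witnessing $x'\le y'$; in particular the relation $x'\le y'$ holds in $\Sigma^\star$. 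Deleting a trailing letter does not change which earlier letters repeat, so for $j\le n-1$ the index $j$ is special for $x'$ exactly when it is special for $x$, and similarly for $y$; hence (a) and (b) for $(x,y,r)$ restrict to (a) and (b) for $(x',y',r-1)$.

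By the inductive hypothesis there is an ordered surjection $f'\colon[m-1]\to[n-1]$ witnessing $x'\le y'$ with $(f')^{-1}(n-1-i)=\{m-1-i\}$ for $i=0,\dots,r-2$. Define $f\colon[m]\to[n]$ by $f|_{[m-1]}=f'$ and $f(m)=n$. Then $f$ is a surjection with $f^{-1}(n)=\{m\}$, and since the value $n$ is not taken on $[m-1]$ we get $f^{-1}(n-i)=(f')^{-1}(n-i)=\{m-i\}$ for $1\le i\le r-1$, so the fibre conditions hold for $i=0,\dots,r-1$. The map $f$ is order-preserving because $f'$ is and because $\min f^{-1}(n-1)\le m-1<m=\min f^{-1}(n)$; and it witnesses $x\le y$ because $f'$ witnesses $x'\le y'$ and the only extra conditions, namely $s_n=t_m$ on the $L$-part and $\sigma_n=\tau_m$ on the $\Lambda$-part of the top coordinate, are exactly the equality of the final letters of $x$ and $y$ from (a). This completes the induction.

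The substantive input is Lemma~\ref{lem:sum-1}; everything else is bookkeeping, so I do not expect a genuine obstacle. The one point that needs a moment of care is the passage to the truncated words: one must first replace an arbitrary witness of $x\le y$ by the special witness $f_0$ with $f_0^{-1}(n)=\{m\}$ before deleting the last letter, since for a general witness the restricted map need not land in $[n-1]$ and the relation $x'\le y'$ would not be immediate; and one must observe that the ``special'' condition is insensitive to deleting a trailing letter so that hypothesis (b) descends to the smaller instance.
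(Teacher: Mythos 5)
Your proof is correct and follows the same strategy as the paper's: induction on $r$, with Lemma~\ref{lem:sum-1} supplying a witness $f_0$ satisfying $f_0^{-1}(n)=\{m\}$ so that truncating the last letters yields $x'\le y'$, applying the inductive hypothesis to the truncated words, and extending the resulting map by $m\mapsto n$. Your added remarks (that specialness is unaffected by deleting a trailing letter, and that one must use the special witness before truncating) are worthwhile points of care that the paper leaves implicit.
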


\begin{proof}
We prove the lemma by induction on $r$. The $r=0$ case is trivial. Suppose now that $r>0$ and that the lemma has been proven for all smaller values of $r$. By Lemma~\ref{lem:sum-1}, we can find an ordered surjection $h \colon [m] \to [n]$ witnessing $x \le y$ and satisfying $h^{-1}(n)=\{m\}$. Let $x'$ and $y'$ be the words formed by deleting the final letters of $x$ and $y$. Then $x' \le y'$, as witnessed by $h \vert_{[m-1]}$. By the inductive hypothesis, we can find $f' \colon [m-1] \to [n-1]$ witnessing $x' \le y'$ and satisfying $(f')^{-1}\{n-i\}=m-i$ for $i=1,\ldots,r-1$. We now define $f \colon [m] \to [n]$ by $f(i)=f'(i)$ for $i \in [m-1]$ and $f(m)=n$.
\end{proof}

If $x$ is a word of length $n$ then we write $x_{-k}$ for $x_{n-k+1}$. For instance, $x_{-1}$ is the rightmost letter in $x$.

\begin{lemma} \label{lem:sum-4}
Let $x$ and $y$ be two words in $\Sigma^{\star}$ of lengths $n$ and $m$, let $1 \le r \le n$ be an integer, and let $1 \le \beta_1 < \cdots < \beta_p \le r$. Suppose that conditions {\rm (a)} and {\rm (b)} from Lemma~\ref{lem:sum-2} hold. Suppose furthermore that $x' \le y'$, where $x'$ is obtained from $x$ by deleting indices $-\beta_1, \ldots, -\beta_p$, and $y'$ is formed similarly from $y$. Then $x \le y$.
\end{lemma}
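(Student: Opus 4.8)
The plan is to build an ordered surjection witnessing $x\le y$ out of one witnessing $x'\le y'$ by re-inserting the $p$ deleted positions. The one real subtlety is that an arbitrary witness of $x'\le y'$ need not extend to an \emph{ordered} map across a deleted position, so it must first be normalized via Lemma~\ref{lem:sum-2}.

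First I would check that the pair $x',y'$ satisfies hypotheses~(a) and~(b) of Lemma~\ref{lem:sum-2} for the parameter $r':=r-p$ (which is $\ge 0$ since the $\beta_i$ are distinct elements of $[1,r]$). Condition~(a) is immediate: the final $r-p$ letters of $x'$ are the letters occupying the surviving positions among the last $r$ positions of $x$, and similarly for $y'$; by condition~(a) for $x,y$ these agree. For condition~(b), fix a surviving index $-k$ (so $k\in[1,r]\setminus\{\beta_1,\dots,\beta_p\}$) and put $c=x_{-k}=y_{-k}$. The positions of $x'$ preceding $-k$ consist of the first $n-r$ positions together with the surviving late positions $-j$ with $k<j\le r$, $j\notin\{\beta_i\}$, whose letters coincide with the corresponding ones in $y'$ by~(a). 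Hence $-k$ is special for $x'$ iff $c$ lies neither among those late letters nor in $\{x_1,\dots,x_{n-r}\}$, and similarly for $y'$ with $\{y_1,\dots,y_{m-r}\}$; so it suffices to show $c\in\{x_1,\dots,x_{n-r}\}$ iff $c\in\{y_1,\dots,y_{m-r}\}$. For this, let $k^{\ast}=\max\{j\in[1,r]:x_{-j}=c\}$, which is well defined since $k$ lies in the set. By maximality and~(a), $c$ occurs among the late letters of neither $x$ nor $y$ before $-k^{\ast}$, so $-k^{\ast}$ is special for $x$ iff $c\notin\{x_1,\dots,x_{n-r}\}$ and special for $y$ iff $c\notin\{y_1,\dots,y_{m-r}\}$; now condition~(b) for $x,y$ at the index $-k^{\ast}$ gives the claim.

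Next, apply Lemma~\ref{lem:sum-2} to $x'\le y'$ with parameter $r-p$ to obtain an ordered surjection $g\colon[m-p]\to[n-p]$ witnessing $x'\le y'$ with $g^{-1}(v)=\{v+m-n\}$ whenever $n-r+1\le v\le n-p$. Let $\iota_x\colon[n-p]\to[n]\setminus D_x$ and $\iota_y\colon[m-p]\to[m]\setminus D_y$ be the order isomorphisms onto the sets of surviving positions, where $D_x=\{n-\beta_i+1\}$ and $D_y=\{m-\beta_i+1\}$, and define $f\colon[m]\to[n]$ by $f(\iota_y(b))=\iota_x(g(b))$ and by sending the deleted position $m-\beta_i+1$ of $y$ to the deleted position $n-\beta_i+1$ of $x$. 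Then $f$ is a surjection, and the identities $f^*(s)=t$, $f_*(\tau)=\sigma$ follow from the witnessing property of $g$ on the surviving positions and from condition~(a) on the deleted positions (these indices lie among the last $r$, so both the $L$- and the $\Lambda$-components match). It remains to check $f$ is order-preserving, i.e.\ $\min f^{-1}(i)<\min f^{-1}(i')$ for $i<i'$ in $[n]$. When $i,i'$ are both surviving or both deleted this is immediate from orderedness of $g$, of $\iota_x,\iota_y$, and of $\beta_1<\cdots<\beta_p$. The two mixed cases reduce, through $\iota_x$ and $\iota_y$, to showing $\min g^{-1}(b)\le b+(m-n)$ when the surviving index is the smaller one (true for every ordered surjection) and $\min g^{-1}(b')\ge b'+(m-n)$ when the deleted index is the smaller one; for the latter, the elementary inequality $\beta_i\le r-p+i$ forces the relevant $b'$ to satisfy $b'\ge n-r+1$, where the normalization of $g$ makes this an equality.

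The main obstacle is the interplay of the first two steps: one must recognize that a generic witness of $x'\le y'$ cannot be extended past a deleted position while remaining ordered—hence the detour through Lemma~\ref{lem:sum-2}—and then push condition~(b) through the deletion, which is the delicate combinatorial point (Step~1). With a normalized $g$ in hand, the construction of $f$ and the verification of its properties are routine bookkeeping and an inequality chase.
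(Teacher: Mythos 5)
Your proof is correct and follows the same strategy as the paper's: normalize a witness of $x' \le y'$ via Lemma~\ref{lem:sum-2} so that the surviving tail positions are matched bijectively, then extend it across the deleted positions using condition (a). The differences are organizational rather than substantive --- the paper peels off one deleted index at a time by induction on $p$ (reducing to $p=1$, $\beta_1=r$) where you re-insert all $p$ positions at once, and your Step~1 (checking that hypotheses (a) and (b) descend to $x',y'$ with parameter $r-p$, via the maximal occurrence $-k^{\ast}$) makes explicit a verification that the paper's argument also needs but leaves implicit.
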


\begin{proof}
We first handle the case $p=1$, where it suffices to treat the case $\beta_1=r$, and we assume this to ease notation. The inequality $x' \le y'$ is witnessed by some ordered surjection $f' \colon [m] \setminus \{m-r+1\} \to [n] \setminus \{n-r+1\}$. By Lemma~\ref{lem:sum-2}, we can assume that $(f')^{-1}\{n-i\}=\{m-i\}$ for $i=0,\ldots,r-2$. We now define $f \colon [m] \to [n]$ by $f(i)=f'(i)$ for $i \ne m-r+1$ and $f(m-r+1)=n-r+1$. This is an ordered surjection witnessing $x \le y$.

Now we do $p$ general by induction on $p$. Let $x''$ be obtained from $x$ be deleting letters $-\beta_1, \ldots, -\beta_{p-1}$, and let $y''$ be obtained from $y$ similarly. Then $x'$ is obtained from $x''$ by deleting index $-\beta_p$ (using the indexing in $x$), and $y'$ is obtained similarly from $y''$. Note that $-\beta_p$ is special in $x''$ if and only if it is in $x$, since the two words only differ to the right of $-\beta_p$. It follows that the specialness of $-\beta_p$ is the same for $x''$ and $y''$. The $p=1$ case shows that $x''\le y''$. By induction on $p$ we now have $x \le y$.
\end{proof}

\begin{lemma} \label{lem:sum-0}
Any $\sigma \in \Lambda^\star$ with $\ell(\sigma) > \#\Lambda$ contains nonempty subsequences that sum to $0$.
\end{lemma}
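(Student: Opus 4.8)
This is the standard ``pigeonhole on partial sums'' argument, adapted to a finite abelian group. Let $\sigma = \sigma_1 \cdots \sigma_n \in \Lambda^\star$ with $n = \ell(\sigma) > \#\Lambda$. Form the partial sums $p_0 = 0$ and $p_k = \sigma_1 + \cdots + \sigma_k$ for $k = 1, \ldots, n$; this gives $n+1 > \#\Lambda + 1$ elements $p_0, p_1, \ldots, p_n$ of $\Lambda$, so by pigeonhole two of them coincide, say $p_i = p_j$ with $i < j$. Then the subsequence $\sigma_{i+1} \cdots \sigma_j$ is nonempty (as $j > i$) and sums to $p_j - p_i = 0$.

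The only point worth a sentence of care is the word ``subsequences'' (plural) in the statement: the argument above produces one nonempty zero-sum contiguous subsequence, but one can iterate — after locating one such block one may delete it (or restrict to a sub-interval) and repeat, or more simply observe that among $p_0, \ldots, p_n$ with $n > \#\Lambda$ there must in fact be some value attained at least twice, and any two indices achieving a repeated value bound a zero-sum block, yielding at least one and typically several such subsequences. Since later uses of this lemma (e.g.\ in analyzing special indices in $\Sigma^\star$) only need the existence of at least one nonempty zero-sum subsequence, I would state and prove exactly that, perhaps remarking that contiguity can even be arranged.

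I do not anticipate any real obstacle here: the entire content is the pigeonhole principle applied to the $n+1$ partial sums in a group of order $\#\Lambda < n+1$. The only thing to get right is the off-by-one bookkeeping — one needs $n+1$ partial sums $p_0, \ldots, p_n$ and $n + 1 \geq \#\Lambda + 1$, which is exactly the hypothesis $\ell(\sigma) > \#\Lambda$ — and to note that the resulting interval $(i, j]$ is nonempty and contiguous, hence genuinely a subsequence in the sense of the paper.
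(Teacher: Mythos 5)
Your proof is correct and is essentially the same as the paper's: both apply the pigeonhole principle to the partial sums of $\sigma$ and take the difference of two equal partial sums to get a nonempty zero-sum block. Your bookkeeping (including $p_0=0$ to get $n+1$ sums) is a harmless variant of the paper's (which uses only the $n$ sums $p_1,\dots,p_n$, already enough since $n>\#\Lambda$), and your reading of the plural ``subsequences'' as requiring only existence of at least one is the intended one.
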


\begin{proof}
The partial sums $\sigma_1 + \cdots + \sigma_k$ for $k=1,\dots,\ell(\sigma)$ must contain a repeated element, and the difference of these two partial sums is the desired set of elements that sums to $0$.
\end{proof}

\begin{lemma} \label{lem:sum-5}
Let $x=\frac{s}{\sigma}$ be a word in $\Sigma^{\star}$, and put $r=\# L \cdot (\# \Lambda+2)$. Then we can find $1 \le \beta_1 < \cdots < \beta_p < \gamma \le r$ such that $s_{-\beta_1}=\cdots=s_{-\beta_p}=s_{-\gamma}$ and $\sigma_{-\beta_1}+\cdots+\sigma_{-\beta_p}=0$.
\end{lemma}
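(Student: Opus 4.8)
The plan is a two-step argument: a pigeonhole count on the letters $s_{-1}, \dots, s_{-r}$ of the $L$-component, followed by an application of Lemma~\ref{lem:sum-0} to the corresponding $\Lambda$-values. Throughout I read the lemma as applying to words $x$ of length at least $r$ (otherwise the positions $-1, \dots, -r$ do not all exist, and the statement is to be understood with this hypothesis), which is the only case that matters for the applications to Theorem~\ref{thm:ows}.

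First I would look only at the final $r$ letters of the $L$-component, i.e., at $s_{-1}, \dots, s_{-r} \in L$. There are $r = \#L \cdot (\#\Lambda + 2)$ of them and only $\#L$ possible values, so by the pigeonhole principle some letter $a \in L$ occurs at least $\#\Lambda + 2$ times among these positions; record the indices of its occurrences as $i_1 < i_2 < \cdots < i_q$, so that $1 \le i_1$, $i_q \le r$, $q \ge \#\Lambda + 2$, and $s_{-i_j} = a$ for all $j$. I would then set $\gamma = i_q$ (the occurrence with the largest index, hence the one sitting furthest to the left among the last $r$ positions), and keep aside the remaining $q - 1 \ge \#\Lambda + 1$ occurrences $i_1 < \cdots < i_{q-1}$, each satisfying $i_j < \gamma$ and $s_{-i_j} = a$.

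Finally, consider the word $\sigma_{-i_1} \sigma_{-i_2} \cdots \sigma_{-i_{q-1}} \in \Lambda^\star$. Its length is $q - 1 > \#\Lambda$, so Lemma~\ref{lem:sum-0} supplies a nonempty subsequence of it summing to $0$ in $\Lambda$; listing the indices of that subsequence (a sub-list of $i_1, \dots, i_{q-1}$) as $\beta_1 < \cdots < \beta_p$ gives exactly $1 \le \beta_1 < \cdots < \beta_p < \gamma \le r$ with $s_{-\beta_1} = \cdots = s_{-\beta_p} = s_{-\gamma} = a$ and $\sigma_{-\beta_1} + \cdots + \sigma_{-\beta_p} = 0$, as desired. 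I do not expect any real obstacle here: the only things to be careful about are the ``indexing from the right'' convention $x_{-k} = x_{\ell(x)-k+1}$ and the choice to reserve the \emph{largest} index $i_q$ for $\gamma$ (so that the $\beta_j$ automatically satisfy $\beta_j < \gamma$). The constant $r = \#L(\#\Lambda+2)$ is exactly tight for this argument: the factor $\#L$ feeds the pigeonhole on letters, one ``$+1$'' pays for the occurrence reserved as $\gamma$, and the second ``$+1$'' pushes the leftover count strictly above $\#\Lambda$ so that Lemma~\ref{lem:sum-0} applies. Presumably this lemma is then combined with Lemma~\ref{lem:sum-4} to control the partial order on $\Sigma^\star$ and thereby establish the Gr\"obner property in Theorem~\ref{thm:ows}.
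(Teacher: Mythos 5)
Your proof is correct and is essentially the same as the paper's: pigeonhole on the final $r$ positions of $s$ to get a letter $a$ occurring at least $\#\Lambda+2$ times, reserve the occurrence of largest index as $\gamma$, and apply Lemma~\ref{lem:sum-0} to the $\sigma$-values at the remaining at least $\#\Lambda+1$ occurrences. The side remarks (implicit hypothesis $\ell(x)\ge r$, the right-indexing convention, tightness of the constant) are consistent with how the lemma is used in Proposition~\ref{prop:sigmastar-noeth}.
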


\begin{proof}
Some letter of $L$, say $a$, must appear at least $k=\# \Lambda+2$ times in the final $r$ letters of $s$. Suppose that $1 \le \alpha_1 < \cdots < \alpha_k \le r$ are such that $s_{-\alpha_i}=a$ for all $1 \le i \le k$. Put $\gamma=\alpha_k$. By Lemma~\ref{lem:sum-0}, we can take $\{\beta_1,\ldots,\beta_p\}$ to be some subset of $\{\alpha_1, \ldots, \alpha_{k-1}\}$.
\end{proof}

\begin{proposition} \label{prop:sigmastar-noeth}
The poset $\Sigma^{\star}$ is noetherian.
\end{proposition}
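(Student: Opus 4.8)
The plan is to derive a contradiction from a Nash--Williams minimal bad sequence, using Lemmas~\ref{lem:sum-5} and~\ref{lem:sum-4} to delete a controlled block of letters near the end of each word. Recall that noetherianity of $\Sigma^\star$ means: for every sequence $x_1, x_2, \dots$ in $\Sigma^\star$ there exist $i<j$ with $x_i \le x_j$.

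Assume $\Sigma^\star$ is not noetherian, and choose a minimal bad sequence $(x_i)_{i\ge 1}$: no $i<j$ satisfies $x_i\le x_j$, and each $x_i$ has minimal length among words $w$ such that $x_1,\dots,x_{i-1},w$ still extends to a bad sequence. Put $r=\#L\cdot(\#\Lambda+2)$ as in Lemma~\ref{lem:sum-5}. Since every word is $\le$ itself (the identity is an ordered surjection witnessing it) and there are only finitely many words of length $<r$, the assignment $i\mapsto x_i$ is injective on $\{i : |x_i|<r\}$, so all but finitely many $x_i$ have length $\ge r$. For those, both the word formed by the last $r$ letters (an element of $\Sigma^r$) and the set of indices among $\{-1,\dots,-r\}$ that are special for $x_i$ take finitely many values, so by the pigeonhole principle there is an infinite set of indices $i_1<i_2<\cdots$ on which both are constant, say equal to $u\in\Sigma^r$ and $P\subseteq\{-1,\dots,-r\}$.

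Next I would apply Lemma~\ref{lem:sum-5} to one of these words; its conclusion depends only on the last $r$ letters, so a single tuple $1\le\beta_1<\cdots<\beta_p<\gamma\le r$ (with $p\ge 1$) works for every $i_l$: in $x_{i_l}$ the letters at positions $-\beta_1,\dots,-\beta_p,-\gamma$ share a common $L$-coordinate, and the $\Lambda$-coordinates at $-\beta_1,\dots,-\beta_p$ sum to $0$. Let $x_{i_l}'$ be $x_{i_l}$ with positions $-\beta_1,\dots,-\beta_p$ deleted, so $|x_{i_l}'|<|x_{i_l}|$. First I would check directly that $x_{i_l}'\le x_{i_l}$: the position $-\gamma$ survives the deletion (as $\gamma\notin\{\beta_1,\dots,\beta_p\}$) and has the common $L$-coordinate, so collapsing every deleted position onto the index of $-\gamma$ and keeping the obvious bijection on the surviving positions defines an ordered surjection witnessing the inequality --- order-preservation uses $\beta_\ell<\gamma$ (so the deleted positions lie to the right of $-\gamma$), and weight-compatibility uses the zero-sum condition. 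Then I would argue that $x_1,\dots,x_{i_1-1},x_{i_1}',x_{i_2}',\dots$ is again a bad sequence: pairs inside the initial segment are fine by badness of $(x_i)$; a pair $(x_p,x_{i_l}')$ with $p<i_1$ is fine because $x_p\le x_{i_l}'\le x_{i_l}$ would contradict badness of $(x_i)$ (note $p<i_1\le i_l$); and for $l<m$, since $x_{i_l}$ and $x_{i_m}$ have the same final $r$ letters ($=u$) and the same specialness pattern on their last $r$ indices ($=P$), conditions~(a) and~(b) of Lemma~\ref{lem:sum-2} hold, so Lemma~\ref{lem:sum-4} would promote $x_{i_l}'\le x_{i_m}'$ to $x_{i_l}\le x_{i_m}$, again contradicting badness. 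As $x_{i_1}'$ is strictly shorter than $x_{i_1}$ and occupies position $i_1$ in the new sequence, this contradicts the minimality of $x_{i_1}$, completing the proof.

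The main obstacle is organizational rather than combinatorial: one must arrange the hypotheses of Lemmas~\ref{lem:sum-4} and~\ref{lem:sum-5} to hold simultaneously for all the relevant words, which is precisely what forces the double pigeonhole (on the last $r$ letters and on the specialness pattern) and the separate verification that deleting a zero-sum block of repeated letters moves strictly downward in the poset. The genuine difficulty has already been isolated in Lemmas~\ref{lem:sum-1}--\ref{lem:sum-5}; what remains is to feed it into the minimal-bad-sequence machine.
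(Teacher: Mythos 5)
Your proof is correct and follows essentially the same route as the paper's: a Nash--Williams minimal bad sequence, pigeonhole on the final $r$ letters and their specialness pattern, Lemma~\ref{lem:sum-5} to produce a deletable zero-sum block of repeated letters, and Lemma~\ref{lem:sum-4} to lift comparability of the shortened words back to the originals. The only (immaterial) difference is in how the minimal-bad-sequence machinery is packaged: you use length-minimality and contradict it by exhibiting a bad sequence with a shorter entry at position $i_1$, whereas the paper uses $<$-minimality and routes the contradiction through noetherianity of the set of words strictly below some term of the sequence.
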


\begin{proof}
We apply Nash--Williams theory \cite{nash-williams}. Recall that an infinite sequence $x_1,x_2,\ldots$ is \emph{bad} if $x_i \not\le x_j$ whenever $i<j$, and \emph{minimal bad} if it is bad and for each $n$ there is no bad sequence $x_1,\ldots,x_{n-1},y_n,y_{n+1},\ldots$ with $y_n<x_n$. Suppose that $\Sigma^{\star}$ is not noetherian. Then a minimal bad sequence $x_1,x_2,\ldots$ exists: this is a standard consequence of $\Sigma^{\star}$ being well-founded (no infinite decreasing sequences), which is obvious. Let $S \subset \Sigma^*$ be the set of words $y$ such that $y<x_i$ for some $i$. Then $(S, <)$ is noetherian: if not, pick a bad sequence $y_1,y_2,\dots$ in $S$. Then $y_1 < x_j$ for some $j$ and then $x_1,x_2,\dots,x_{j-1}, y_1,y_2, \dots$ is a bad sequence in $\Sigma^\star$ which violates minimality of $x_1,x_2,\dots$. We now proceed to reach a contradiction.

Let $r=\#L \cdot (\# \Lambda+2)$. Let $y_1,y_2,\ldots$ be a subsequence of $x_1,x_2,\ldots$ such that $\ell(y_i) \ge r$ for all $i$, the final $r$ letters of $y_i$ are independent of $i$, and the specialness of the final $r$ indices is independent of $i$. Such a subsequence exists since $\ell(x_i) \to \infty$ and there are only finitely many possibilities for the final $r$ letters and their specialness. Write $y_i=\frac{s_i}{\sigma_i}$. Fix $1 \le \beta_1 < \cdots < \beta_p<\gamma \le r$ such that $s_{i,-\beta_1}=\cdots=s_{i,-\beta_p}=s_{i,-\gamma}$ and $\sigma_{i,-\beta_1}+\cdots+\sigma_{i,-\beta_p}=0$ for all $i$. Such numbers exist by Lemma~\ref{lem:sum-5} and the fact that the final $r$ letters of $y_i$ are independent of $i$.

Let $z_i$ be obtained from $y_i$ by deleting the letters at $-\beta_1, \ldots, -\beta_p$. Then $z_i<y_i$ for all $i$. Indeed, suppose $\ell(y_i)=n$, and put $T=\{n-\beta_1+1,\ldots,n-\beta_p+1\}$. We regard $z_i$ as indexed by $[n] \setminus T$. The ordered surjection $f \colon [n_i] \to [n_i] \setminus T$ defined by $f(i)=i$ for $i \not\in T$ and $f(i)=n-\gamma+1$ for $i \in T$ then witnesses the claimed inequality. The main point here is that $\sum_{j \in T} \sigma_{i,j}=0$, which ensures the appropriate identity on $f_*$.

Since $z_i \in S$ for all $i$ and $S$ is noetherian, there exists $i<j$ such that $z_i<z_j$. But then $y_i<y_j$ by Lemma~\ref{lem:sum-4}, which contradicts the badness of the original sequence $x_1,x_2,\ldots$. It follows that $\Sigma^{\star}$ is noetherian.
\end{proof}

Fix $\theta \in \Lambda^L$. Let $\frac{s}{\sigma}=\frac{s_1\cdots s_n}{\sigma_1 \cdots \sigma_n}$ be a word in $\cK_{\theta}$ (recall this means that $\bw(\frac{s}{\sigma}) = \theta$). Put
\begin{displaymath}
\Pi_i = \left\{ {\textstyle \frac{s_1}{\ast}}, \cdots, {\textstyle \frac{s_i}{\ast}} \right\},
\end{displaymath}
where $\ast$ means any element of $\Lambda$. Define a language $\cL(\frac{s}{\sigma})$ by
\begin{displaymath}
\cL({\textstyle \frac{s}{\sigma}}) = ( {\textstyle \frac{s_1}{\sigma_1}} ) \Pi_1^{\star} \cdots ({\textstyle \frac{s_n}{\sigma_n}}) \Pi_n^{\star}.
\end{displaymath}
It is clear that $\cL(\frac{s}{\sigma})$ is an ordered language.

\begin{lemma} \label{lem:ows-2}
If $\frac{t}{\tau} \in \cL(\frac{s}{\sigma}) \cap \cK_{\theta}$ then $\frac{s}{\sigma} \le \frac{t}{\tau}$.
\end{lemma}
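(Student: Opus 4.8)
The goal is, given $\frac{t}{\tau}\in\cL(\frac{s}{\sigma})\cap\cK_\theta$, to produce an ordered surjection $f$ witnessing $\frac{s}{\sigma}\le\frac{t}{\tau}$, i.e.\ (writing $n$ for the length of $\frac{s}{\sigma}$ and $m$ for that of $\frac{t}{\tau}$) an ordered surjection $f\colon[m]\to[n]$ with $f^*(s)=t$ and $f_*(\tau)=\sigma$. The plan is to read $f$ off directly from the membership $\frac{t}{\tau}\in\cL(\frac{s}{\sigma})$. First I would write $\frac{t}{\tau}=\frac{s_1}{\sigma_1}u_1\frac{s_2}{\sigma_2}u_2\cdots\frac{s_n}{\sigma_n}u_n$ with $u_i\in\Pi_i^\star$, and let $p_i\in[m]$ be the position of the letter $\frac{s_i}{\sigma_i}$ (the \emph{anchor} of block $i$), so $p_1<\cdots<p_n$, $t_{p_i}=s_i$, and $\tau_{p_i}=\sigma_i$. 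Then define $f\colon[m]\to[n]$ by $f(p_i)=i$ on the anchors, and for any other position $j$ — which lies in some $u_i$, so $t_j=\frac{a}{\gamma}$ with $a\in\{s_1,\dots,s_i\}$ because $\frac{a}{\ast}\in\Pi_i$ — set $f(j)$ to be the index of the \emph{first} occurrence of $a$ in $s$.

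Next I would dispatch the easy properties. The map $f$ is well defined (for a non-anchor $j$ the first occurrence of $a$ exists since $a\in\{s_1,\dots,s_i\}$), it is surjective since $f(p_i)=i$, and $f^*(s)=t$ because in each case $s_{f(j)}$ has the same $L$-component as $t_j$. For orderedness the claim is $\min f^{-1}(k)=p_k$ for all $k$: any non-anchor preimage of $k$ lies in a block $u_i$ with $i\ge k$, since the letter $s_k$ first appears at index $k$ and hence only enters $\Pi_i$ once $i\ge k$, so that position is strictly larger than $p_k$; as $p_1<\cdots<p_n$, $f$ is ordered.

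The substantive step is $f_*(\tau)=\sigma$, that is $\sum_{j\in f^{-1}(k)}\tau_j=\sigma_k$ for each $k$. The anchor $p_k$ already contributes $\tau_{p_k}=\sigma_k$, so it suffices that the non-anchor part of $f^{-1}(k)$ has total $\Lambda$-weight $0$. This part is empty unless $k$ is the first occurrence of $a:=s_k$, in which case it consists of exactly the letters $\frac{a}{\ast}$ occurring inside the blocks $u_1,\dots,u_n$, whose $\Lambda$-components sum to $w_a(\frac{t}{\tau})$ minus the contribution of the anchors of $L$-value $a$. Here both congruence hypotheses enter: $w_a(\frac{t}{\tau})=\theta_a$ since $\frac{t}{\tau}\in\cK_\theta$, while $\theta_a=w_a(\frac{s}{\sigma})=\sum_{s_\ell=a}\sigma_\ell$ since $\frac{s}{\sigma}\in\cK_\theta$, and the anchor contribution is $\sum_{s_\ell=a}\tau_{p_\ell}=\sum_{s_\ell=a}\sigma_\ell$; so the non-anchor sum is $0$, as needed, and $f$ witnesses $\frac{s}{\sigma}\le\frac{t}{\tau}$. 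I expect this last step to be the only real obstacle: the congruence condition is precisely what forces the ``extra'' copies of each letter to cancel in $\Lambda$, so they can all be routed to the first occurrence without disturbing $f_*(\tau)=\sigma$, whereas a more naive routing (e.g.\ to the nearest earlier occurrence) would leave uncontrolled weights and break the identity.
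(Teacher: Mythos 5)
Your construction of $f$ — anchors mapped by the order-preserving bijection, all other positions routed to the first occurrence of their $L$-letter — is exactly the map used in the paper, and your verification that the congruence conditions force the non-anchor weights in each fiber to cancel is the same argument (the paper phrases it as $\bw(\frac{t}{\tau}|_J)=0$ implying $f_*(\tau|_J)=0$). The proof is correct and matches the paper's, with the orderedness check spelled out in slightly more detail.
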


\begin{proof}
Let $\frac{t}{\tau} \in \cL(\frac{s}{\sigma}) \cap \cK_{\theta}$ be a word of length $m$ and write $\frac{t}{\tau}=(\frac{s_1}{\sigma_1}) w_1 \cdots (\frac{s_n}{\sigma_n}) w_n$, with $w_i \in \Pi_i^{\star}$. Let $J \subset [m]$ be the indices occurring in the words $w_1, \ldots, w_n$ and let $K$ be the complement of $J$, so that $\frac{t}{\tau} \vert_K = \frac{s}{\sigma}$. Define a function $f \colon [m] \to [n]$ as follows. On $K$, we let $f$ be the unique order-preserving bijection. For $a \in \{s_1, \ldots, s_n\}$, let $r(a) \in [n]$ be minimal so that $s_{r(a)}=a$. Now define $f$ on $J$ by $f(j)=r(t_j)$. It is clear that $f$ is an ordered surjection and that $f^*(s)=t$. Since $\bw(\frac{s}{\sigma})=\bw(\frac{t}{\tau})=\theta$, it follows that $\bw(\frac{t}{\tau} \vert_J)=0$. From the way we defined $f$, it follows that $f_*(\tau \vert_J)=0$. Thus $f_*(\tau)=\sigma$, which completes the proof.
\end{proof}

We say that a word $\sigma_1 \cdots \sigma_n \in \Lambda^{\star}$ is {\bf minimal} if no non-empty subsequence of $\sigma_2 \cdots \sigma_n$ sums to $0$. Note that we started with the second index. By convention, words of length $1$ are minimal. By Lemma~\ref{lem:sum-0}, there are only finitely many minimal words. Let $\frac{s}{\sigma}$ in $\cK_{\theta}$ be given. We say that $\frac{t}{\tau} \colon [m] \to \Sigma^{\star}$ is {\bf minimal} over $\frac{s}{\sigma} \colon [n] \to \Sigma^{\star}$ if there is an ordered surjection $f \colon [m] \to [n]$ such that $t=f^*(s)$ and $\sigma=f_*(\tau)$ and for every $i \in [n]$ the word $\tau \vert_{f^{-1}(i)}$ is minimal. If $\frac{t}{\tau}$ is minimal over $\frac{s}{\sigma}$ then the length of $\frac{t}{\tau}$ is bounded, so there are only finitely many such minimal words.

\begin{lemma} \label{lem:ows-3}
Let $\frac{s}{\sigma} \le \frac{r}{\rho}$ be words in $\cK_{\theta}$. There exists $\frac{t}{\tau}$ minimal over $\frac{s}{\sigma}$ such that $\frac{r}{\rho} \in \cL(\frac{t}{\tau})$.
\end{lemma}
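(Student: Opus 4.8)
The plan is to realize $\frac{t}{\tau}$ as a subword of $\frac{r}{\rho}$, produced by deleting zero-sum subsequences one fiber at a time. First fix an ordered surjection $g\colon[q]\to[n]$ witnessing $\frac{s}{\sigma}\le\frac{r}{\rho}$, where $n=\ell(\frac{s}{\sigma})$ and $q=\ell(\frac{r}{\rho})$, so that $r=g^{*}(s)$ and $\sigma=g_{*}(\rho)$. For each $i\in[n]$ the fiber $F_{i}=g^{-1}(i)\subseteq[q]$ carries the subword $\rho|_{F_{i}}$, whose total sum is $\sigma_{i}$. If this subword is not minimal in the sense defined just above the lemma --- that is, some nonempty subsequence of the letters after its first one sums to $0$ --- delete the indices of one such subsequence from $F_{i}$, and repeat. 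This process terminates because $\#F_{i}$ strictly decreases, it never deletes $\min F_{i}$, and it leaves $\sum_{k\in F_{i}}\rho_{k}=\sigma_{i}$ unchanged; when it halts, $\rho$ restricted to the surviving part of $F_{i}$ is minimal.

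Next, let $K\subseteq[q]$ be the set of surviving indices, put $m=\#K$, let $\frac{t}{\tau}$ be $\frac{r}{\rho}$ restricted to $K$ and reindexed by $[m]$, and let $f\colon[m]\to[n]$ be the map induced by $g|_{K}$. I would then check that $\frac{t}{\tau}$ is minimal over $\frac{s}{\sigma}$ via $f$: the map $f$ is a surjection because every $\min F_{i}$ survives; it is ordered because $\min f^{-1}(i)=\min g^{-1}(i)$ for all $i$ and $g$ is ordered; $t=f^{*}(s)$ because $t_{c}=r_{k_{c}}=s_{g(k_{c})}=s_{f(c)}$ (writing $K=\{k_{1}<\cdots<k_{m}\}$); $\sigma=f_{*}(\tau)$ because each deleted block summed to $0$; and each $\tau|_{f^{-1}(i)}$ is minimal by construction. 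In particular $\frac{s}{\sigma}\le\frac{t}{\tau}$, so $\bw(\frac{t}{\tau})=\theta$ and $\cL(\frac{t}{\tau})$ is defined.

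It remains to show $\frac{r}{\rho}\in\cL(\frac{t}{\tau})$, and this is the step I expect to require the most care. Write $\frac{r}{\rho}=(\frac{t_{1}}{\tau_{1}})v_{1}(\frac{t_{2}}{\tau_{2}})v_{2}\cdots(\frac{t_{m}}{\tau_{m}})v_{m}$, where $v_{j}$ is the (possibly empty) block of deleted letters lying strictly between $k_{j}$ and $k_{j+1}$ for $j<m$, and $v_{m}$ is the tail after $k_{m}$. There is no block before $k_{1}$, since $g(1)=1$ (surjectivity forces $\min g^{-1}(g(1))=1$, and the ordering condition then forces $g(1)$ to be the least element of $[n]$), so $1=\min F_{1}$ always survives and hence $k_{1}=1$. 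Now consider a deleted letter at position $p$ with $k_{j}<p<k_{j+1}$ (the case $p>k_{m}$ is identical): it lies in the fiber $F_{i}$ with $i=g(p)$, and $\min F_{i}$ survives, say $\min F_{i}=k_{a}$; since $k_{a}<p<k_{j+1}$ we get $a\le j$. The $L$-component of the letter at position $p$ is $s_{i}=s_{g(k_{a})}=t_{a}$, so this letter belongs to $\Pi_{j}=\{\frac{t_{1}}{\ast},\dots,\frac{t_{j}}{\ast}\}$. Therefore $v_{j}\in\Pi_{j}^{\star}$ for every $j$, which is precisely the assertion $\frac{r}{\rho}\in\cL(\frac{t}{\tau})$, completing the proof.
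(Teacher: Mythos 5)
Your proof is correct and follows essentially the same route as the paper: both produce $\frac{t}{\tau}$ as the restriction of $\frac{r}{\rho}$ to a subset $K$ obtained by discarding zero-sum subsequences from each fiber of a witnessing surjection (the paper takes $K$ minimal containing the fiber minima, which amounts to your iterative deletion), and both then observe that every discarded letter lies to the right of the surviving minimum of its fiber, giving membership in $\cL(\frac{t}{\tau})$. Your write-up just spells out the final language-membership check (including $k_1=1$ and $v_j\in\Pi_j^\star$) in more detail than the paper's one-line justification.
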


\begin{proof}
Let $n,m$ be the lengths of $\frac{s}{\sigma}$ and $\frac{r}{\rho}$, and choose a witness $f \colon [m] \to [n]$ to $\frac{s}{\sigma} \le \frac{r}{\rho}$. Let $I \subset [m]$ be the set of elements of the form $\min f^{-1}(i)$ for $i \in [n]$. Let $K \subset [m]$ be minimal subject to $I \subset K$ and $f_*(\rho \vert_K)=\sigma$. Then $\rho \vert_{f^{-1}(i) \cap K}$ is minimal for all $i \in [n]$. Indeed, if it were not then we could discard a subsequence summing to $0$ and make $K$ smaller. We thus see that $\frac{t}{\tau} = \frac{r}{\rho} \vert_K$ is minimal over $\frac{s}{\sigma}$. If $i \in [m] \setminus K$ then there exists $j<i$ in $I$ with $t_i=t_j$, and so $\frac{r}{\rho} \in \cL(\frac{t}{\tau})$.
\end{proof}

\begin{lemma} \label{lem:ows-4}
Every poset ideal of $\cK_{\theta}$ is of the form $\cL \cap \cK_{\theta}$, where $\cL$ is an ordered language on $\Sigma$.
\end{lemma}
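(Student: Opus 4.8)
The plan is to reduce to principal ideals via the noetherianity of $\Sigma^{\star}$, and then to realize each principal ideal as $\cL\cap\cK_{\theta}$ using the ordered languages $\cL(\frac{s}{\sigma})$ together with Lemmas~\ref{lem:ows-2} and~\ref{lem:ows-3}.

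First I would note that $\cK_{\theta}$ is a noetherian poset: a bad sequence in $\cK_{\theta}$ would be a bad sequence in $\Sigma^{\star}$, which is noetherian by Proposition~\ref{prop:sigmastar-noeth}. Hence, exactly as in the proof of Proposition~\ref{prop:prod-QON}, any poset ideal $J$ of $\cK_{\theta}$ is a finite union $J=\bigcup_{i=1}^{k} J_i$ of principal ideals $J_i=\{\frac{r}{\rho}\in\cK_{\theta}\mid \frac{s^{(i)}}{\sigma^{(i)}}\le\frac{r}{\rho}\}$ with $\frac{s^{(i)}}{\sigma^{(i)}}\in\cK_{\theta}$.

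Next I would handle a single principal ideal. Fix $\frac{s}{\sigma}\in\cK_{\theta}$ and let $\cL_{s/\sigma}$ be the union of the ordered languages $\cL(\frac{t}{\tau})$ over the finitely many words $\frac{t}{\tau}$ that are minimal over $\frac{s}{\sigma}$; this is again an ordered language. The claim is that $\{\frac{r}{\rho}\in\cK_{\theta}\mid\frac{s}{\sigma}\le\frac{r}{\rho}\}$ equals $\cL_{s/\sigma}\cap\cK_{\theta}$. For the inclusion $\subseteq$, if $\frac{s}{\sigma}\le\frac{r}{\rho}$ in $\cK_{\theta}$ then Lemma~\ref{lem:ows-3} yields a word $\frac{t}{\tau}$ minimal over $\frac{s}{\sigma}$ with $\frac{r}{\rho}\in\cL(\frac{t}{\tau})$. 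For $\supseteq$, suppose $\frac{r}{\rho}\in\cL(\frac{t}{\tau})\cap\cK_{\theta}$ with $\frac{t}{\tau}$ minimal over $\frac{s}{\sigma}$; by definition of ``minimal over'' there is an ordered surjection witnessing $\frac{s}{\sigma}\le\frac{t}{\tau}$, so $\bw(\frac{t}{\tau})=\bw(\frac{s}{\sigma})=\theta$ and hence $\frac{t}{\tau}\in\cK_{\theta}$; then Lemma~\ref{lem:ows-2} gives $\frac{t}{\tau}\le\frac{r}{\rho}$, whence $\frac{s}{\sigma}\le\frac{r}{\rho}$ by transitivity. Taking $\cL=\bigcup_{i=1}^{k}\cL_{s^{(i)}/\sigma^{(i)}}$, which is a finite union of ordered languages and so ordered, we obtain $J=\cL\cap\cK_{\theta}$.

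The real work underlying this lemma sits in Proposition~\ref{prop:sigmastar-noeth} and Lemma~\ref{lem:ows-3}; here the only point requiring care is keeping the direction of $\le$ straight and observing that a word minimal over $\frac{s}{\sigma}$ automatically lies in $\cK_{\theta}$ (since $\le$ preserves $\bw$), which is what makes Lemma~\ref{lem:ows-2} applicable.
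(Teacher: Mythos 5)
Your proof is correct and follows essentially the same route as the paper: reduce to finitely many principal ideals (the paper leaves the noetherianity justification implicit), then identify each principal ideal with $\bigcup \cL(\frac{t}{\tau}) \cap \cK_\theta$ over the finitely many minimal words, using Lemma~\ref{lem:ows-3} for one inclusion and Lemma~\ref{lem:ows-2} plus transitivity for the other. Your added observation that a word minimal over $\frac{s}{\sigma}$ lies in $\cK_\theta$ is a detail the paper glosses over, and it is needed to invoke Lemma~\ref{lem:ows-2}.
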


\begin{proof}
It suffices to treat the case of a principal ideal. Thus consider the ideal $S$ generated by $\frac{s}{\sigma} \in \cK_{\theta}$. Let $\frac{t_i}{\tau_i}$ for $1 \le i \le n$ be the words minimal over $\frac{s}{\sigma}$, and let $\cL=\bigcup_{i=1}^n \cL(\frac{t_i}{\tau_i})$. Then $\cL$ is an ordered language, by construction. If $\frac{r}{\rho} \in \cL \cap \cK_{\theta}$ then $\frac{r}{\rho} \in \cL(\frac{t_i}{\tau_i}) \cap \cK_{\theta}$ for some $i$, and so $\frac{s}{\sigma} \le \frac{t_i}{\tau_i} \le \frac{r}{\rho}$ by Lemma~\ref{lem:ows-2}, and so $\frac{r}{\rho} \in S$. Conversely, suppose $\frac{r}{\rho} \in S$. Then $\frac{r}{\rho} \in \cL(\frac{t_i}{\tau_i})$ for some $i$ by Lemma~\ref{lem:ows-3}, and of course $\frac{r}{\rho} \in \cK_{\theta}$, and so $\frac{r}{\rho} \in \cL \cap \cK_{\theta}$.
\end{proof}

\begin{proof}[Proof of Theorem~\ref{thm:ows}]
The category $\cC=\OWS_{\Lambda}^{\op}$ is clearly directed. Let $x=([n], \theta)$ be an object of $\cC$. To show that $\cC$ is Gr\"obner, we need to show that $\vert \cC_x \vert$ is orderable and is a noetherian poset. Lexicographic order on $\Sigma^{\star}$ induces an admissible order on $\vert \cC_x \vert$. 

To show that $\vert \cC_x \vert$ is noetherian, we apply the above theory with $L=[n]$. Suppose that $f \colon x \to y$ is a map in $\cC$, with $y=([m], \phi)$; note that this means that $f$ is a surjection $[m] \to [n]$. We define a word $[m] \to \Sigma^{\star}$ by mapping $j \in [m]$ to $(f(j), \phi(j))$. One can reconstruct $f$ from this word, so this defines an injection $i \colon \vert \cC_x \vert \to \Sigma^{\star}$. In fact, the image lands in $\cK_{\theta}$. It is clear from the definition of the order on $\Sigma^{\star}$ that $i$ is strictly order-preserving. Thus $\vert \cC_x \vert$ is noetherian by Proposition~\ref{prop:sigmastar-noeth}. 

Finally, since $i$ maps ideals to ideals, we see that it gives a strong QO${}_N$-lingual structure on $\vert \cC_x \vert$ by Lemma~\ref{lem:ows-4}.
\end{proof}

\section{Categories of $G$-surjections} \label{sec:FSG}

In this section we study the surjective version of $G$-sets. The noetherian property is deduced in \S\ref{ss:FSG-basic}. In \S \ref{ss:gendelta}, we explain the connection between $\FS_G^{\op}$-modules and $\Delta$-modules. In \S \ref{ss:fsexample}, we give some examples of $\FS_G^{\op}$-modules. The results on Hilbert series are stated in \S\ref{sec:Gsurj-results}. The remainder of the section is devoted to proving the Hilbert series results.

Throughout, we assume that $G$ is finite. Following \cite[\S 14]{serre}, we let $\cR_\bk(G)$ denote the Grothendieck group of finitely generated $\bk[G]$-modules.

\subsection{Basic properties} \label{ss:FSG-basic}

Let $\Phi \colon \FS \to \FS_G$ be the functor taking a function $f \colon S \to T$ to the $G$-function $(f,\sigma) \colon S \to T$ where $\sigma=1$.

\begin{proposition} \label{FSGpropG}
The functor $\Phi^\op \colon \FS^{\op} \to \FS^{\op}_G$ satisfies property~{\rm (F)}.
\end{proposition}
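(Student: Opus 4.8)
The plan is to unwind Definition~\ref{def:propF} directly for $\Phi^\op \colon \FS^\op \to \FS_G^\op$. Fix an object $x$ of $\FS_G^\op$, i.e.\ a finite set $S$. A morphism $x \to \Phi^\op(y)$ in $\FS_G^\op$ is the same as a morphism $\Phi(y) \to x$ in $\FS_G$, that is, a $G$-surjection $(f,\sigma) \colon T \to S$ where $T = \Phi(y)$ is a finite set (viewed in $\FS$) and $f \colon T \surj S$ is an honest surjection. So property~(F) asks: can we find finitely many $G$-surjections $(f_i, \sigma_i) \colon T_i \surj S$ (with $T_i \in \FS$) such that every $G$-surjection $(f,\sigma) \colon T \surj S$ factors through one of them via a surjection $g \colon T \surj T_i$ in $\FS$ (so $(f,\sigma) = (f_i,\sigma_i) \circ (g,1)$, which forces $f = f_i \circ g$ and $\sigma = \sigma_i \circ g$)?

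First I would dispose of $\sigma$: given $(f,\sigma) \colon T \surj S$, I want to absorb all the $G$-labels into a bounded-size set by collapsing fibers. The key observation is that if two elements $t, t' \in T$ satisfy $f(t) = f(t')$ and $\sigma(t) = \sigma(t')$, then the surjection $g \colon T \surj T'$ that identifies $t$ with $t'$ is compatible with both $f$ and $\sigma$, i.e.\ $f$ and $\sigma$ descend to $T'$. Iterating, every $(f,\sigma) \colon T \surj S$ factors as $T \xrightarrow{g} \ol{T} \xrightarrow{(\ol f, \ol\sigma)} S$ where on $\ol T$ the map $(\ol f, \ol\sigma) \colon \ol T \to S \times G$ is injective; hence $\#\ol T \le \#S \cdot \#G$. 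Since $G$ is finite, there are only finitely many isomorphism classes of such ``reduced'' $G$-surjections $(\ol f, \ol\sigma) \colon \ol T \surj S$ with $\#\ol T \le \#S \cdot \#G$; choose one representative $(f_i,\sigma_i) \colon T_i \surj S$ from each. These are the finitely many objects $y_i = T_i$ and morphisms $f_i$ required by Definition~\ref{def:propF}: indeed, given any $(f,\sigma) \colon T \surj S$, its reduction $(\ol f, \ol\sigma)$ is isomorphic to some $(f_i,\sigma_i)$, and composing that isomorphism with $g$ gives the desired surjection $T \surj T_i$ in $\FS$ with $(f,\sigma) = (f_i,\sigma_i) \circ (g,1)$.

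There is essentially no hard step here — the argument is the $\FS_G^\op$ analogue of the property~(F) verification for $\Delta \colon \FI_G \to \FI_G \times \FI_G$ in Proposition~\ref{prop:segre-FIG} and for $\Phi \colon \FA \to \FA_G$ in Proposition~\ref{prop:FIG-propertyF}, all of which exploit finiteness of $G$ to bound the size of the objects that need to appear. The one point requiring a little care is checking that collapsing a fiber-pair with equal $G$-labels is genuinely a morphism in $\FS_G$ and that the factorization commutes on the nose with the composition law $(g,\sigma)\circ(f,\rho) = (g\circ f,\, x \mapsto \sigma(f(x))\rho(x))$ from the definition of $\FA_G$ — but since the second label in $(g,1)$ is trivial, the composition formula reduces to $\ol\sigma \circ g = \sigma$, which is exactly the statement that $\sigma$ descends. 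So the main (very mild) obstacle is just bookkeeping with the direction reversal in $\FS_G^\op$ versus $\FS_G$; the combinatorial content is immediate.
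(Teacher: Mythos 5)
Your proposal is correct and follows essentially the same route as the paper: the paper also collapses the fibers of $(f,\sigma)\colon T\to S\times G$ to obtain what it calls a \emph{minimal} morphism (your ``reduced'' one), bounds its source by $\#S\cdot\#G$ using finiteness of $G$, and takes representatives of the finitely many isomorphism classes as the required $y_i$ and $f_i$. The composition-law check $\sigma=\sigma_i\circ g$ that you flag is exactly the point the paper relies on as well.
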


\begin{proof}
Let $x \in \FS_G$ be given. Say that a morphism $(f, \sigma) \colon y \to x$ is minimal if the function $(f, \sigma) \colon y \to x \times G$ is injective. Since $G$ is finite, minimal implies $\#y \le \#x \cdot \#G$, so there are finitely many minimal maps up to isomorphism. Now consider a map $(f, \sigma) \colon y \to x$ in $\FS_G$. Define an equivalence relation on $y$ by $a \sim b$ if $f(a)=f(b)$ and $\sigma(a)=\sigma(b)$, and let $g \colon y \to y'$ be the quotient. Then the induced map $(f', \sigma) \colon y' \to x$ is minimal. Furthermore, $(f, \sigma)=(g,1)(f', \sigma)=\Phi(g) (f', \sigma)$. Reversing all of the morphisms, we see that $\Phi^{\op}$ satisfies property~(F).
\end{proof}

Given a finite collection $\ul{G}=(G_i)_{i \in I}$ of finite groups, write $\FS_{\ul{G}} = \prod_{i \in I} \FS_{G_i}$.

\begin{corollary}
The category $\FS_{\ul{G}}^{\op}$ is quasi-Gr\"obner.
\end{corollary}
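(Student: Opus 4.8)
The plan is to reduce everything to results already in hand. The essential inputs are: $\FS^{\op}$ is quasi-Gr\"obner \cite{catgb}; the forgetful functor $\Phi^{\op}\colon\FS^{\op}\to\FS_{G_i}^{\op}$ satisfies property~(F) (Proposition~\ref{FSGpropG}); property~(F) together with essential surjectivity transports quasi-Gr\"obnerness (Proposition~\ref{potgrob}); and a finite product of quasi-Gr\"obner categories is quasi-Gr\"obner (Proposition~\ref{grobprod}).

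First I would note that $\Phi\colon\FS\to\FS_{G_i}$ is the identity on objects, so $\Phi^{\op}$ is essentially surjective; combining this with Proposition~\ref{FSGpropG} and Proposition~\ref{potgrob} shows that each $\FS_{G_i}^{\op}$ is quasi-Gr\"obner. Then, since the opposite of a product of categories is the product of the opposites, $\FS_{\ul G}^{\op}=\bigl(\prod_{i\in I}\FS_{G_i}\bigr)^{\op}=\prod_{i\in I}\FS_{G_i}^{\op}$, and because $I$ is finite, Proposition~\ref{grobprod} gives the conclusion.

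There is no real obstacle here: all the content is in Proposition~\ref{FSGpropG} and the cited results from \cite{catgb}. If one prefers to treat $\FS_{\ul G}$ in a single step rather than coordinate by coordinate, one can instead check that the product functor $\prod_{i\in I}\Phi_i^{\op}\colon\prod_{i\in I}\FS^{\op}\to\FS_{\ul G}^{\op}$ satisfies property~(F); this is routine, since property~(F) for a product of functors can be verified coordinatewise (reduce to an object $(x_i)_{i\in I}$, take the finite property-(F) data for each $\Phi_i^{\op}$ at $x_i$, and form all products of the resulting finite families), and then one applies Proposition~\ref{potgrob} with $\prod_{i\in I}\FS^{\op}$, which is quasi-Gr\"obner by Proposition~\ref{grobprod}.
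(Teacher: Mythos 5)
Your proposal is correct and follows the paper's own argument: the paper likewise deduces the corollary from Proposition~\ref{FSGpropG}, Proposition~\ref{potgrob}, Proposition~\ref{grobprod}, and the quasi-Gr\"obner property of $\FS^{\op}$ from \cite{catgb}. The coordinatewise reduction you spell out (and the alternative single-step variant) is exactly what the paper's one-line citation is implicitly doing.
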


\begin{proof}
This follows from Propositions~\ref{potgrob}, \ref{grobprod}, and \cite[Theorem 8.1.2]{catgb}.
\end{proof}

\begin{corollary} \label{FSG-noeth}
If $\bk$ is left-noetherian then $\Rep_{\bk}(\FS^{\op}_{\ul{G}})$ is noetherian.
\end{corollary}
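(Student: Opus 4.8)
The plan is to deduce this immediately from the quasi-Gröbner property established in the preceding corollary together with the general machinery recalled in \S\ref{sec:background}. Concretely, the preceding corollary asserts that $\FS_{\ul{G}}^{\op}$ is quasi-Gröbner, and Theorem~\ref{grobnoeth} states that for any quasi-Gröbner category $\cC$ and any left-noetherian ring $\bk$, the category $\Rep_{\bk}(\cC)$ is noetherian. Applying this with $\cC = \FS_{\ul{G}}^{\op}$ gives the claim, so the proof is a one-line invocation.

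If one prefers to unwind the abstraction, the same conclusion follows directly along the following route: by Proposition~\ref{FSGpropG} each functor $\Phi^{\op} \colon \FS^{\op} \to \FS_{G_i}^{\op}$ is essentially surjective and satisfies property~(F), hence so is the product functor $\prod_{i \in I} \FS^{\op} \to \FS_{\ul{G}}^{\op}$; since $\Rep_{\bk}(\FS^{\op})$ is noetherian by \cite[Theorem 8.1.2]{catgb} and a finite product of Gröbner categories is Gröbner (Proposition~\ref{grobprod}), Proposition~\ref{pullback-noeth} yields noetherianity of $\Rep_{\bk}(\FS_{\ul{G}}^{\op})$. Either way there is no genuine obstacle: all the real content already resides in the noetherianity of ordinary $\FS^{\op}$-modules from \cite{catgb} and in the formalism of property~(F), and this corollary is merely packaging.
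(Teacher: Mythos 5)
Your first paragraph is exactly the paper's (implicit) argument: the corollary follows immediately from the preceding corollary that $\FS_{\ul{G}}^{\op}$ is quasi-Gr\"obner together with Theorem~\ref{grobnoeth}. The proposal is correct and matches the paper's approach.
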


\subsection{Generalized $\Delta$-modules}
\label{ss:gendelta}

Let $\cA$ be an abelian category equipped with a symmetric ``cotensor'' structure, i.e., a functor $\cA \to \cA \otimes \cA$, and analogous data opposite to that of a tensor structure. (Here we are using the Deligne tensor product of abelian categories \cite{deligne}.) Given a surjection $f \colon T \to S$ of finite sets, there is an induced functor $f^* \colon \cA^{\otimes S} \to \cA^{\otimes T}$ by cotensoring along the fibers of $f$. A {\bf $\Delta$-module} over $\cA$ is a rule $M$ that assigns to each finite set $S$ an object $M_S$ of $\cA^{\otimes S}$ and to each surjection $f \colon T \to S$ of finite sets a morphism $M_f \colon f^*(M_S) \to M_T$, such that if $f \colon T \to S$ and $g \colon S \to R$ are surjections, then the diagram
\begin{displaymath}
\xymatrix{
& g^*(M_S) \ar[dr]^{M_g} \\
(gf)^*(M_R) \ar[ur]^{f^*(M_g)} \ar[rr]^{M_{gf}} && M_T }
\end{displaymath}
commutes. There are two main examples relevant to this paper:
\begin{itemize}
\item Let $\cA$ be the category of polynomial functors $\Vec \to \Vec$. Then $\cA^{\otimes 2}$ is identified with the category of polynomial functors $\Vec^2 \to \Vec$. There is a comultiplication $\cA \to \cA^{\otimes 2}$ taking a functor $F$ to the functor $(U, V) \mapsto F(U \otimes V)$, and this gives $\cA$ the structure of a symmetric cotensor category. $\Delta$-modules over $\cA$ are $\Delta$-modules as defined in \cite[\S 9.2]{catgb}.
\item Let $\cA$ be the category of representations of a finite group $G$. Then $\cA^{\otimes 2}$ is identified with the category of representations of $G \times G$. There is a comultiplication $\cA \to \cA^{\otimes 2}$ taking a representation $V$ of $G$ to the representation $\Ind_G^{G \times G}(V)$ of $G \times G$, where $G$ is included in $G \times G$ via the diagonal map. This gives $\cA$ the structure of a symmetric cotensor category. $\Delta$-modules over $\cA$ are representations of $\FS_G^{\op}$.
\end{itemize}

If $n!$ is invertible in the base field then the category of polynomial functors of degree $\le n$ is equivalent, as a cotensor category, to the category $\prod_{k=0}^n \Rep(S_k)$.
To see this, first note that invertibility of $n!$ implies that $\Rep(S_k)$ for $k \le n$, as well as the category of polynomial functors of degree $k \le n$ are semisimple categories, and are equivalent by Schur--Weyl duality (this is usually stated over characteristic $0$ \cite[\S 4]{expos}, but all that is needed is $n!$ to be invertible). The compatibility of the cotensor structures follows from the form of the Schur--Weyl equivalence.

We thus find that $\Delta$-modules of degree $\le n$ (in the sense of \cite[\S 9.1]{catgb}) coincide with representations of $\prod_{k=0}^n \FS_{S_k}^{\op}$. Thus our results on $\FS_G^{\op}$ can be loosely viewed as a generalization of our results on $\Delta$-modules from \cite{catgb} (``loosely'' because in bad characteristic the results are independent of each other). It seems possible that our results could generalize to $\Delta$-modules over any ``finite'' abelian cotensor category.

\subsection{Examples: Segre products of simplicial complexes}
\label{ss:fsexample}

We now give a source of $\FS_G^{\op}$-modules. Let $X$ and $Y$ be simplicial complexes on finite vertex sets $X_0$ and $Y_0$. Define a simplicial complex $X \ast Y$ on the vertex set $X_0 \times Y_0$ as follows. Let $p_1 \colon X_0 \times Y_0 \to X_0$ be the projection map, and similarly define $p_2$. Then $S \subset X_0 \times Y_0$ is a simplex if and only if $p_1(S)$ and $p_2(S)$ are simplices of $X$ and $Y$ and have the same cardinality as $S$. We call $X \ast Y$ the {\bf Segre product} of $X$ and $Y$. It is functorial for maps of simplicial complexes. It is not a topological construction, and depends in an essential way on the simplicial structure.

Fix a finite simplicial complex $X$, equipped with an action of a group $G$. The diagonal map $X_0 \to X_0 \times X_0$ induces a map of simplicial complexes $X \to X \ast X$. We thus obtain a functor from $\FS_G^{\op}$ to the category of simplicial complexes by $S \mapsto X^{\ast S}$. Fixing $i$, we obtain a representation $M_i$ of $\FS_G^{\op}$ by $S \mapsto \rH_i(X^{\ast S}; \bk)$. It is not difficult to directly show that $S \mapsto \rC_i(X^{\ast S}; \bk)$ is a finitely generated representation of $\FS_G^{\op}$, where $\rC_i$ denotes the space of simplicial $i$-chains. Thus by Corollary~\ref{FSG-noeth}, $M_i$ is a finitely generated representation of $\FS_G^{\op}$. Theorem~\ref{FSGhilb} below gives information about the Hilbert series of $M_i$.

The case where $X$ is just a single simplex is already extremely complicated and interesting, and is closely related to syzygies of the Segre embedding. In fact, if $X$ has $d$ vertices then the $\FS_{S_d}^{\op}$-module given by $\rH_{p-1}(X^{\ast \bullet}; \bk)$ coincides with the degree $d$ piece of the $\Delta$-module $F_p$ of $p$-syzygies of the Segre embedding (as defined in \cite{delta-mod}) under the equivalence in the previous section, at least when $d!$ is invertible in $\bk$.

\subsection{Hilbert series} \label{sec:Gsurj-results}

Let $M$ be a finitely generated $\FS_{\ul{G}}^{\op}$-module over an algebraically closed field $\bk$. We need the following fact: given finite groups $G$ and $H$, every irreducible $\bk[G \times H]$-module is of the form $V \otimes W$ where $V$ and $W$ are irreducible modules for $\bk[G]$, respectively $\bk[H]$ \cite[Proposition 2.3.23]{kowalski}. This implies that we have a canonical identification 
\[
\cR_\bk(G \times H) = \cR_\bk(G) \otimes \cR_\bk(H).
\]

Let $\bn \in \bN^I$, and write $[\bn]$ for $([n_i])_{i \in I}$. Then $M({[\bn]})$ is a finite-dimensional representation of $\ul{G}^{\bn}$. Let $[M]_{\bn}$ denote the image of the class of this representation under the map 
\begin{displaymath}
\cR_{\bk}(\ul{G}^{\bn}) = \bigotimes_{i \in I} \cR_{\bk}(G_i)^{\otimes n_i} \to \Sym^{\vert \bn \vert}(\cR_{\bk}(\ul{G})),
\end{displaymath}
where
\begin{displaymath}
\cR_{\bk}(\ul{G}) = \bigoplus_{i \in I} \cR_{\bk}(G_i).
\end{displaymath}
Note that in good characteristic, one can recover the isomorphism class of $M([\bn])$ as a representation of $\ul{G}^{\bn}$ from $[M]_{\bn}$ due to the $S_{\bn}$-equivariance. If $\{L_{i,j}\}$ are the irreducible representations of the $G_i$, then $[M]_{\bn}$ can be thought of as a polynomial in corresponding variables $\{t_{i,j}\}$. Define the {\bf Hilbert series} of $M$ by 
\begin{displaymath}
\rH_M(\bt) = \sum_{\bn \in \bN^I} [M]_{\bn}.
\end{displaymath}
This is an element of the ring $\SYM(\cR_{\bk}(\ul{G})_{\bQ}) \cong \bQ \lbb t_{i,j} \rbb$. This definition does not fit into our framework of Hilbert series of normed categories, though it can be seen as an enhanced Hilbert series (similar to \cite[\S 5.1]{symc1}).

The following is a simplified version of our main theorem on Hilbert series. Recall the definition of $\cK_N$ from Definition~\ref{defn:pecK}.

\begin{theorem} \label{FSGhilb}
Let $M$ be a finitely generated representation of $\FS_{\ul{G}}^{\op}$ over an algebraically closed field $\bk$. Then $\rH_M(\bt)$ is a $\cK_N$ function of the $\bt$, where $N$ is the least common multiple of the exponents of the $G_i$ not divisible by $p$.
\end{theorem}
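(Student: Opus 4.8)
The plan is to reduce the statement, via Brauer's theorem on induced characters, to the case where each $G_i$ is cyclic of order prime to the characteristic $p$, and then to realize $\FS_G^{\op}$-modules for such $G$ as modules over the weighted-surjection categories studied in \S\ref{sec:weighted-surj}, where Theorem~\ref{FSWhilb} applies directly. First I would observe that the Hilbert series $\rH_M(\bt)$ is additive in short exact sequences of $\FS_{\ul G}^{\op}$-modules (since taking classes in $\cR_\bk$ is additive, and by noetherianity — Corollary~\ref{FSG-noeth} — every finitely generated $M$ has a finite filtration whose graded pieces we can control). So it suffices to prove the theorem for $M$ of a convenient form, e.g. principal projectives, or more usefully for modules obtained by inducing from subcategories.

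The heart of the argument is the reduction to cyclic groups. Brauer's theorem says that in $\cR_\bk(G)_{\bQ}$ (working with Brauer characters, which is why the exponents are taken prime to $p$), the class of any representation is a $\bZ$-linear combination of classes induced from cyclic (even elementary) subgroups $H \le G$ with $|H|$ prime to $p$. I would want to upgrade this to a statement at the level of $\FS_G^{\op}$-modules: namely that there is an exact functor (or a pair of adjoint functors, restriction and induction along a subgroup inclusion $H \hookrightarrow G$, which induces a functor $\FS_H^{\op} \to \FS_G^{\op}$) so that the Hilbert series of any finitely generated $\FS_G^{\op}$-module can be written as a $\bZ$-linear combination of Hilbert series of finitely generated $\FS_H^{\op}$-modules for various cyclic $H$ of order prime to $p$. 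Since $\cK_N$ is closed under $\bZ$-linear combinations (it is closed under addition, and scaling by integers is harmless), and the relevant moduli divide $N$, this reduces us to the cyclic prime-to-$p$ case. For a product $\ul G = (G_i)$ one applies this coordinatewise, using that $\cR_\bk(G \times H) = \cR_\bk(G)\otimes\cR_\bk(H)$ as recalled in the text.

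For the cyclic prime-to-$p$ case, let $G_i$ be cyclic of order $N_i$ with $N_i$ invertible in $\bk$; then $\bk$ (algebraically closed) is a splitting field in which $\#G_i$ is invertible, so $\Rep_\bk(G_i)$ is identified with the category of $\Lambda_i$-graded vector spaces, where $\Lambda_i$ is the character group of $G_i$ (a cyclic group of order $N_i$). Under this identification the cotensor (diagonal-induction) structure on $\Rep_\bk(G_i)$ becomes the convolution/addition structure on $\Lambda_i$-graded spaces, so $\FS_{G_i}^{\op}$-modules become exactly $\FWS_{\Lambda_i}^{\op}$-modules in the sense of \S\ref{sec:OWS-FWS}, and $\FS_{\ul G}^{\op}$-modules become $(\FWS_{\Lambda_1}^{\op}\times\cdots\times\FWS_{\Lambda_r}^{\op})$-modules. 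Finite generation is preserved under this equivalence. One then checks that our Hilbert series $\rH_M(\bt)$ — which records $[M([\bn])]$ in the appropriate symmetric power of $\cR_\bk(\ul G)$, including the multinomial coefficients built into the definition in \S\ref{sec:Gsurj-results} — matches the Hilbert series of the corresponding $\FWS$-module as defined before Theorem~\ref{FSWhilb} (the multinomial coefficients are exactly the count of orderings, so the two bookkeeping conventions agree). Theorem~\ref{FSWhilb} then says this series is $\cK_{N'}$ where $N'$ is the exponent of $\Lambda_1\times\cdots\times\Lambda_r$, which divides $N$; since $\cK_{N'} \subseteq \cK_N$ (roots of unity of order dividing $N'$ are $N$th roots of unity), we are done.

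The main obstacle I expect is the Brauer-theorem reduction at the categorical level: Brauer's theorem is a statement in the representation ring, and to use it one needs the restriction and induction functors along $H \hookrightarrow G$ to behave well both as functors on the $\FS^{\op}$-categories (preserving finite generation — restriction should satisfy property (F), induction is a left adjoint hence right-exact and preserves finitely generated) and compatibly with the cotensor structure that makes $\FS_G^{\op}$-modules into $\Delta$-modules over $\Rep_\bk(G)$. Getting the signs/multiplicities in Brauer's theorem to lift to an honest identity of Hilbert series — rather than merely an equality after some projection — is the delicate bookkeeping step, and is presumably where the hypothesis ``$N$-good'' and the remarks in \S\ref{optimalhilb} about when $\bQ(\zeta_N)$ suffices really enter. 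Everything after that reduction is a translation exercise plus citation of Theorem~\ref{FSWhilb}.
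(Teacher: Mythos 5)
The core of your argument --- identifying $\FS_{\ul{G}}^{\op}$-modules for abelian $\ul{G}$ of invertible order with $\FWS_{\ul{\Lambda}}^{\op}$-modules and invoking Theorem~\ref{FSWhilb}, with the multinomial coefficients accounting for orderings --- is exactly the paper's Lemma~\ref{FSGlem5}. But the reduction to that case is where your proposal has a genuine gap, and you flag it yourself without closing it. You propose to lift Brauer's theorem to the module level, writing $\rH_M$ as a $\bZ$-linear combination of Hilbert series of finitely generated $\FS_H^{\op}$-modules via induction functors along $H \hookrightarrow G$. No such induction functor on $\FS^{\op}$-module categories exists in the paper, and constructing one compatible with the cotensor structure and with the signed decomposition in Brauer's theorem is exactly the ``delicate bookkeeping'' you admit you cannot do. The paper avoids this entirely by going in the opposite direction: it applies an honest exact functor (restriction to $\ul{H}$ followed by $[\ul{H},\ul{H}]$-coinvariants, Lemma~\ref{FSGlem3}) sending $M$ to a finitely generated $\FS_{\ul{H}^{\ab}}^{\op}$-module $M'$ with $\rH_{M'} = \phi(\rH_M)$, where $\phi$ is the ring homomorphism induced by the additive map $\cR_{\bk}(\ul{G}) \to \bigoplus_j \cR_{\bk}(H_j^{\ab})$. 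Brauer's theorem enters only through the purely $K$-theoretic statement that this additive map is a \emph{split injection} of lattices (the ``$N$-good'' condition; Proposition~\ref{FSGlem2}, proved by dualizing to $\cP_{\bk}$ via Frobenius reciprocity). The idea missing from your proposal is Lemma~\ref{FSGlem1}: membership in $\cK_N$ can be detected after a split injection of the variable lattice, since a splitting $j$ satisfies $j(\phi(\rH_M)) = \rH_M$ and $j$ preserves $\cK_N$. With that lemma, no identity of Hilbert series ``with signs'' is ever needed.

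A secondary issue: your opening reduction (``it suffices to prove the theorem for principal projectives by additivity in short exact sequences'') does not work as stated. Resolving $M$ by projectives gives $\rH_M = \rH_P - \rH_K$ with $K$ again merely finitely generated, so the d\'evissage does not terminate, and $\cK_N$-ness of projectives does not propagate to arbitrary quotients this way. The paper makes no such reduction; the $\cK_N$ property for arbitrary finitely generated modules comes from the Gr\"obner/lingual machinery (Theorem~\ref{hilbthm} applied to $\OWS_{\Lambda}^{\op}$), which controls arbitrary submodules of principal projectives through initial modules and poset ideals.
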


Stating the full result requires some additional notions. Let $G$ be a finite group and let $\bk$ be a field. Let $\{H_j\}_{j \in J}$ be a collection of subgroups such that the orders of the commutator subgroups $[H_j,H_j]$ are invertible in $\bk$, and let $H_j^{\ab} = H_j / [H_j,H_j]$ be the abelianization of $H_j$. There is a functor $\Rep_{\bk}(H_j) \to \Rep_{\bk}(H_j^{\ab})$ given by taking coinvariants under $[H_j, H_j]$. This functor is exact since the order of $[H_j,H_j]$ is invertible in $\bk$, and thus induces a homomorphism of Grothendieck groups $\cR_{\bk}(H_j) \to \cR_{\bk}(H_j^{\ab})$. There are also homomorphisms $\cR_{\bk}(G) \to \cR_{\bk}(H_j)$ given by restriction. We say that the family $\{H_j\}$ is {\bf good} if the composite
\begin{displaymath}
\cR_{\bk}(G) \to \bigoplus_{j \in J} \cR_{\bk}(H_j) \to \bigoplus_{j \in J} \cR_{\bk}(H_j^{\ab})
\end{displaymath}
is a split injection (i.e., an injection with torsion-free cokernel). We say that $G$ is {\bf $N$-good} if it admits a good family $\{H_j\}$ such that the exponent of each $H_j^{\ab}$ divides $N$. We say that a family $\ul{G}$ of finite groups is $N$-good if each member is. 

The following is our main theorem on Hilbert series. The proof is given in \S\ref{ss:FSGproof}.

\begin{theorem} \label{FSGhilb2}
Suppose that $\ul{G}$ is $N$-good. Let $M$ be a finitely generated $\FS_{\ul{G}}^{\op}$-module over an algebraically closed field $\bk$. Then $\rH_M(\bt)$ is a $\cK_N$ function of the $t_{i,j}$.
\end{theorem}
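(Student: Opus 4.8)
The plan is to reduce, using the $N$-good structure, to the Hilbert-series statement for finitely generated modules over products of the categories $\FWS_{\Lambda}^{\op}$ (Theorem~\ref{FSWhilb}). The one purely formal ingredient is the behaviour of the class $\cK_N$ under split injections of lattices: \emph{if $\iota \colon \Xi \hookrightarrow \Xi'$ is an injection of finite rank free $\bZ$-modules with torsion-free cokernel and $f \in \SYM(\Xi_{\bQ})$ is such that $\SYM(\iota)(f) \in \SYM(\Xi'_{\bQ})$ is $\cK_N$, then $f$ is $\cK_N$.} Using the coordinate-free formulation of $\cK_N$ (the remark after Theorem~\ref{qordhilb}), this follows by choosing a retraction $\pi \colon \Xi' \to \Xi$: the induced ring homomorphism $\SYM(\pi)$ of completed symmetric algebras satisfies $\SYM(\pi) \circ \SYM(\iota) = \id$, so $f = \SYM(\pi)(\SYM(\iota)(f))$, and applying $\SYM(\pi)$ to a presentation $g/\prod_k(1-\lambda_k)$ exhibiting that $\SYM(\iota)(f)$ is $\cK_N$ yields such a presentation for $f$, because $\pi$ carries a $\bZ$-basis of $\Xi'$ to $\bZ$-linear combinations of a $\bZ$-basis of $\Xi$, hence $\bZ[\zeta_N]$-combinations to $\bZ[\zeta_N]$-combinations.

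Now the reduction. For each $i$ fix a good family $\{H_{i,j}\}_{j \in J_i}$ for $G_i$ with each $H_{i,j}^{\ab}$ of exponent dividing $N$, and write $\Lambda_{i,j}$ for the character group over $\bk$ of the prime-to-$p$ quotient of $H_{i,j}^{\ab}$ (a finite abelian group of exponent dividing $N$). The subgroup inclusions $H_{i,j} \subseteq G_i$, assembled with disjoint unions, give a functor $\Theta \colon \prod_{i,j} \FS_{H_{i,j}} \to \FS_{\ul G}$; I would check that $\Theta^{\op}$ is essentially surjective and satisfies property~(F) — a morphism of $\FS_{\ul G}$ with a given target, precomposed down, is determined up to an element of $\prod_{i,j} H_{i,j}$ by its images in the coset spaces $G_i/H_{i,j}$, of which there are only boundedly many, so the witnessing data of Definition~\ref{def:propF} can be drawn from a finite list depending only on the target (the same coset bookkeeping used in the proofs of Propositions~\ref{prop:FIG-propertyF} and~\ref{FSGpropG}). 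Hence $M^{\flat} := (\Theta^{\op})^* M$ is a finitely generated $(\prod_{i,j}\FS_{H_{i,j}}^{\op})$-module by Proposition~\ref{propFfg}. Next, the quotients $H_{i,j} \twoheadrightarrow H_{i,j}^{\ab} \twoheadrightarrow (H_{i,j}^{\ab})_{p'}$ induce functors whose left Kan extensions preserve finite generation (they send principal projectives to principal projectives) and which on objects amount to forming coinvariants under the relevant normal subgroups; these have invertible order on the commutator part (by the definition of a good family) and the $p$-part is killed on Grothendieck groups since every simple module over a finite abelian group in characteristic $p$ is inflated from its prime-to-$p$ quotient. Composing, and then applying the standard equivalence $\Rep_{\bk}(\FS_A^{\op}) \simeq \Rep_{\bk}(\FWS_{\hat A}^{\op})$ for $A$ abelian of order prime to $p$ (representations of $A$ being graded by $\hat A$), one produces a finitely generated module $\widetilde M$ over a product of categories $\FWS_{\Lambda_i}^{\op}$, where for each $i$ the groups $\Lambda_{i,j}$ ($j \in J_i$) are regrouped into the single finite abelian group $\Lambda_i = \bigoplus_j \Lambda_{i,j}$. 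The crucial — and most delicate — claim is then that, after matching the multinomial normalizations built into the two Hilbert-series conventions, one has $\rH_{\widetilde M} = \SYM(\iota)(\rH_M)$, where $\iota \colon \cR_{\bk}(\ul G) = \bigoplus_i \cR_{\bk}(G_i) \to \bigoplus_{i,j} \cR_{\bk}(H_{i,j}^{\ab})$ is exactly the composite of restriction and abelianization from the definition of a good family — and this is a split injection by the hypothesis that $\ul G$ is $N$-good.

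Finally, Theorem~\ref{FSWhilb} shows $\rH_{\widetilde M}$ is $\cK_{N'}$, where $N'$ is the exponent of $\prod_i \Lambda_i$; since $N' \mid N$ this gives that $\SYM(\iota)(\rH_M) = \rH_{\widetilde M}$ is $\cK_N$, and then the formal ingredient of the first paragraph yields that $\rH_M$ is $\cK_N$. I expect the real obstacle to be the middle of this argument: arranging the chain of functors so that finite generation survives throughout (the property~(F) check for $\Theta^{\op}$, absorbing the coset growth it must carry) \emph{and} so that the resulting transformation of Hilbert series is precisely $\SYM(\iota)$ for the map $\iota$ built into the notion of $N$-goodness, which forces one to keep careful track of the multinomial/symmetric-power normalizations. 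By contrast, the genuinely hard analytic content — rationality of Hilbert series of $\FWS_{\Lambda}^{\op}$-modules — has already been isolated in Theorem~\ref{FSWhilb}.
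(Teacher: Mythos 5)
Your proposal is correct and follows essentially the same route as the paper: reduce via the good family to a finitely generated module over a product of categories $\FWS_{\Lambda}^{\op}$, apply Theorem~\ref{FSWhilb}, and descend along the split injection of Grothendieck groups (your first paragraph is exactly the paper's Lemma~\ref{FSGlem1}, and your ``crucial claim'' about Hilbert series is handled there by Lemmas~\ref{FSGlem3}--\ref{FSGlem5}, asserted at about the same level of detail as you give). The only reorganizations — folding the subgroup restriction into the single pullback functor $\Theta$ instead of doing disjoint union first (Lemma~\ref{FSGlem4}) and restriction-plus-coinvariants second (Lemma~\ref{FSGlem3}), and invoking Kan extensions for the abelianization — are harmless, though be aware that your explicit passage to the prime-to-$p$ quotient of $H_{i,j}^{\ab}$, if implemented by coinvariants, does \emph{not} preserve classes in Grothendieck groups (coinvariants under a $p$-group in characteristic $p$ is not exact, e.g.\ the regular representation of $\bZ/p$); in the applications the $H_{i,j}^{\ab}$ already have order prime to $p$, so that step is vacuous.
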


Using Brauer's theorem, we show that over an algebraically closed field, every group is $N$-good where $N$ is prime-to-$p$ part of the exponent of the group (Proposition~\ref{FSGlem2}), and so Theorem~\ref{FSGhilb} follows from Theorem~\ref{FSGhilb2}. We show that symmetric groups are 2-good if $n!$ is invertible in $\bk$, which recovers some of our results on Hilbert series of $\Delta$-modules (see \cite[\S 9.2]{catgb}) in good characteristic. For general groups, we know little about the optimal value of $N$. Finding some results could be an interesting group theory problem.

\begin{example} \label{hilbexample}
Let $G$ be a finite group and let $\{V_i\}_{i \in I}$ be the set of irreducible representations of $G$ over $\bC$. Define an $\FS_G^{\op}$-module $M_i$ by
\begin{displaymath}
M_i(S) = \Ind_G^{G^S}(V_i),
\end{displaymath}
where $G \to G^S$ is the diagonal map. Let $C$ be the set of conjugacy classes in $G$, $\chi_i$ be the character of $V_i$, and $t_i$ be an indeterminate corresponding to $V_i$. A computation similar to that in \cite[Lem.~5.7]{delta-mod} gives
\begin{displaymath}
\rH_{M_i}(\bt) = \frac{1}{\# G} \sum_{c \in C} \frac{\# c \cdot \chi_i(c)}{1-(\sum_{j \in I} \chi_j(c) t_j)}.
\end{displaymath}
This is a $\cK_N$ function of the $t_i$, as predicted by Theorem~\ref{FSGhilb}, where $N$ is such that all characters of $G$ take values in $\bQ(\zeta_N)$.
\end{example}

\subsection{Group theory}
\label{ss:gpthy}

Let $p=\chr(\bk)$. If $p=0$ then, by convention, every group has order prime to $p$, and the only $p$-group is the trivial group. We say that a collection $\{H_i\}_{i \in I}$ of subgroups of $G$ is a {\bf covering} if the map on Grothendieck groups
\begin{displaymath}
\cR_{\bk}(G) \to \bigoplus_{i \in I} \cR_{\bk}(H_i)
\end{displaymath}
is a split injection. Recall that if $\ell$ is a prime, then an {\bf $\ell$-elementary group} is one that is the direct product of an $\ell$-group and a cyclic group of order prime to $\ell$. An {\bf elementary group} is a group which is $\ell$-elementary for some prime $\ell$. 

\begin{lemma} \label{lem:good-groups}
The following results hold over any field $\bk$:
\begin{enumerate}[\indent \rm (a)]
\item Let $\{H_i\}_{i \in I}$ be a covering of $G$, and for each $i$ let $\{K_j\}_{j \in J_i}$ be a covering of $H_i$, and let $J=\amalg_{i \in I} J_i$. Then $\{K_j\}_{j \in J}$ is a covering of $G$.

\item Let $\{H_i\}_{i \in I}$ be a covering of $G$, and suppose each $H_i$ is $N$-good. Then $G$ is $N$-good.

\item Suppose that $H$ is a $p$-elementary group and write $H=H_1 \times H_2$, where $H_1$ is cyclic of order prime to $p$ and $H_2$ is a $p$-group. Then $\{H_1\}$ is a covering of $H$.
\end{enumerate}
The following hold if $\bk$ is algebraically closed:
\begin{enumerate}[\indent \rm (a)]
\setcounter{enumi}{3}
\item The collection of elementary subgroups $\{H_i\}_{i \in I}$ of $G$ is a covering of $G$.

\item If $H$ has order prime to $p$, then $H$ is $N$-good where $N$ is the exponent of $H$.
\end{enumerate}
\end{lemma}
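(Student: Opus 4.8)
The plan is to treat (a)--(c) as formal manipulations with split injections and group-algebra structure, and to extract (d) and (e) from Brauer's induction theorem by a standard transpose argument; essentially all the content is in the last two parts. For (a) and (b), recall that for finitely generated free abelian groups ``split injection'' coincides with ``injective with torsion-free cokernel,'' and that a composite of split injections is a split injection. In (a), the map $\cR_\bk(G)\to\bigoplus_{j\in J}\cR_\bk(K_j)$ is, component by component, the composite of restriction $\cR_\bk(G)\to\cR_\bk(H_i)$ with restriction $\cR_\bk(H_i)\to\cR_\bk(K_j)$; by transitivity of restriction this composite equals $(\mathrm{Res}^G_{K_j})_j$, and it is split injective because it is a composite of the two given coverings, so $\{K_j\}$ is a covering of $G$. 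Part (b) is identical in form: composing the covering $\cR_\bk(G)\to\bigoplus_i\cR_\bk(H_i)$ with the direct sum of the goodness maps $\cR_\bk(H_i)\to\bigoplus_j\cR_\bk(K_{ij}^{\ab})$ yields, again by transitivity of restriction, exactly the composite $\cR_\bk(G)\to\bigoplus_{i,j}\cR_\bk(K_{ij})\to\bigoplus_{i,j}\cR_\bk(K_{ij}^{\ab})$ that defines goodness of $\{K_{ij}\}_{i,j}$ for $G$, and it is split injective; the requirements that $\lvert[K_{ij},K_{ij}]\rvert$ be invertible in $\bk$ and that $\exp(K_{ij}^{\ab})$ divide $N$ are inherited from the good families of the $H_i$, so $G$ is $N$-good.

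For (c), if $p=0$ then $H_2=1$ and there is nothing to prove, so assume $p>0$. Write $\bk[H]=\bk[H_1]\otimes_\bk\bk[H_2]$. Since $H_2$ is a $p$-group and $\chr(\bk)=p$, its augmentation ideal $I\subset\bk[H_2]$ is nilpotent, so $J:=\bk[H_1]\otimes I$ is a nilpotent ideal of $\bk[H]$ with $\bk[H]/J\cong\bk[H_1]$, which is semisimple because $\lvert H_1\rvert$ is prime to $p$. Hence $J$ is the Jacobson radical of $\bk[H]$, and every simple $\bk[H]$-module is a simple $\bk[H_1]$-module on which $H_2$ acts trivially; restriction to $H_1$ is the identity on such modules. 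Thus $\mathrm{Res}^H_{H_1}\colon\cR_\bk(H)\to\cR_\bk(H_1)$ carries a $\bZ$-basis bijectively to a $\bZ$-basis, so it is an isomorphism, in particular a split injection, which is exactly the assertion that $\{H_1\}$ is a covering of $H$.

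For (d) and (e), the key input is Brauer's induction theorem together with the fact that the transpose of a surjection of finitely generated free abelian groups is a split injection (its cokernel is the $\bZ$-dual of the kernel of the original map). For (d): there is a perfect $\bZ$-bilinear pairing $\mathrm{P}_\bk(G)\times\cR_\bk(G)\to\bZ$ between the Grothendieck group of projectives and $\cR_\bk(G)$ (with $\mathrm{P}_\bk(G)=\cR_\bk(G)$ when $p=0$), under which induction and restriction are adjoint by Frobenius reciprocity; Brauer's theorem gives that the induction map $\bigoplus_E\mathrm{P}_\bk(E)\to\mathrm{P}_\bk(G)$ over elementary subgroups $E$ is surjective (see \cite{serre}), and its transpose with respect to these pairings is precisely the restriction map $\cR_\bk(G)\to\bigoplus_E\cR_\bk(E)$, so the latter is a split injection and the elementary subgroups form a covering. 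For (e): as $\lvert H\rvert$ is prime to $p$ and $\bk$ is algebraically closed, $\cR_\bk(H)$ is the ordinary character ring of $H$, so Brauer's theorem applies in its sharp monomial form and $\cR_\bk(H)$ is spanned over $\bZ$ by the classes $[\Ind_K^H\chi]$ with $K\le H$ and $\chi$ a $1$-dimensional character of $K$; identifying $\cR_\bk(K^{\ab})$ with the group of such $\chi$, the coinvariants map $\cR_\bk(K)\to\cR_\bk(K^{\ab})$ reads off multiplicities of $1$-dimensional constituents, so by Frobenius reciprocity the composite $\cR_\bk(H)\to\bigoplus_{K\le H}\cR_\bk(K^{\ab})$ is the transpose of the surjective map $[\chi]\mapsto[\Ind_K^H\chi]$ and is therefore split injective. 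Since $\lvert[K,K]\rvert$ divides $\lvert H\rvert$ (hence is invertible in $\bk$) and $\exp(K^{\ab})$ divides $\exp(H)=N$, the family of all subgroups of $H$ witnesses that $H$ is $N$-good (one may alternatively derive (e) from (d) and (b), using that elementary groups are nilpotent, hence monomial).

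The main obstacle is the Brauer-theoretic content in (d) and (e): one needs the induction theorem in its integral form (split injectivity of restriction, not merely injectivity) and, in positive characteristic, in the correct modular incarnation, namely surjectivity of induction from elementary subgroups onto the Grothendieck group $\mathrm{P}_\bk(G)$ of projectives, which is the appropriate dual of $\cR_\bk(G)$; the transpose identifications must also be set up with the right pairing in each characteristic. Parts (a)--(c) are routine by comparison.
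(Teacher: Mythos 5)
Your proposal is correct and follows essentially the same route as the paper: parts (a)--(c) by composing split injections and by the triviality of simple modules for a $p$-group in characteristic $p$, and parts (d)--(e) via the perfect pairing between $\cR_\bk$ and $\cP_\bk$, Frobenius reciprocity to identify the transpose of restriction (resp.\ of restriction-plus-coinvariants) with induction (resp.\ induction from one-dimensional characters), and Brauer's induction theorem to get surjectivity, hence split injectivity of the transpose. The only differences are expository (e.g.\ your Jacobson-radical phrasing of (c) and the remark that (e) also follows from (d) and (b)).
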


\begin{proof}
(a) and (b) are clear.

(c) The only simple $\bk[H_2]$-module is trivial, so $\cR_{\bk}(H) \to \cR_{\bk}(H_1)$ is an isomorphism. 

(d) Let $\alpha \colon \cR_{\bk}(G) \to \bigoplus \cR_{\bk}(H_i)$ be the restriction map. Let $\cP_{\bk}(G)$ be the Grothendieck group of finite-dimensional projective $\bk[G]$-modules. The map $\cR_{\bk}(G) \times \cP_{\bk}(G) \to \bZ$ given by $(V,W) \mapsto \dim_\bk \Hom_G(V,W)$ is a perfect pairing \cite[\S 14.5]{serre}. Combining this with Frobenius reciprocity, it follows that the dual of $\alpha$ can be identified with the induction map $\bigoplus \cP_{\bk}(H_i) \to \cP_{\bk}(G)$. This map is surjective by Brauer's theorem \cite[\S 17.2, Thm.~39]{serre}. Since the dual of $\alpha$ is surjective, it follows that $\alpha$ is a split injection, which proves the claim.

(e) Indeed, arguing with duals and Frobenius reciprocity again, it is enough to find subgroups $\{K_i\}_{i \in I}$ of $H$ such that the induction map $\bigoplus_{i \in I} \cR_{\bk}(K_i^{\ab}) \to \cR_{\bk}(H)$ is surjective. (Note that $\cR_{\bk}=\cP_{\bk}$ for groups of order prime to $p$.) This follows from Brauer's theorem \cite[\S 10.5, Thm.~20]{serre}. 
\end{proof}

\begin{proposition} \label{FSGlem2}
Suppose that $\bk$ is algebraically closed and $G$ is a finite group. Then $G$ is $N$-good where $N$ is the prime-to-$p$ part of the exponent of $G$.
\end{proposition}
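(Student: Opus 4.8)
The plan is to reduce, via the machinery already assembled in Lemma~\ref{lem:good-groups}, to the case of a group of order prime to $p$, where part (e) applies directly. Set $N$ equal to the prime-to-$p$ part of the exponent of $G$. By Lemma~\ref{lem:good-groups}(d) the elementary subgroups of $G$ form a covering of $G$, so by Lemma~\ref{lem:good-groups}(b) it is enough to show that every elementary subgroup $H \le G$ is $N$-good. Fix such an $H$; it is $\ell$-elementary for some prime $\ell$, and we write $H = C \times P$ with $C$ cyclic of order prime to $\ell$ and $P$ an $\ell$-group.

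The heart of the argument is to produce, inside each such $H$, a single subgroup $H' \le H$ of order prime to $p$ which by itself forms a covering of $H$. If $\ell = p$, then $\# C$ is prime to $p$, and $\{C\}$ is a covering of $H$ by Lemma~\ref{lem:good-groups}(c); so take $H' = C$. If $\ell \ne p$ (this subsumes the case $p = 0$), then since $C$ is cyclic it decomposes as $C = C' \times C''$ with $C'$ of order prime to $p$ and $C''$ a $p$-group, and I would set $H' = C' \times P$. Here $\# H' = \# C' \cdot \# P$ is prime to $p$ because $\# P$ is a power of $\ell \ne p$, and $H = H' \times C''$ with $C''$ a $p$-group. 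Since the only simple $\bk[C'']$-module is the trivial one ($\bk$ has characteristic $p$ and $C''$ is a $p$-group), the restriction map $\cR_\bk(H) = \cR_\bk(H') \otimes \cR_\bk(C'') \to \cR_\bk(H')$ is an isomorphism, so $\{H'\}$ is a covering of $H$ — this is the same computation as in the proof of Lemma~\ref{lem:good-groups}(c).

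To finish: $H'$ has order prime to $p$, so by Lemma~\ref{lem:good-groups}(e) it is $M$-good, where $M$ is the exponent of $H'$. Since $H' \le G$, $M$ divides the exponent of $G$, and being prime to $p$ it divides $N$; and a group that is $M$-good is automatically $N$-good once $M \mid N$, since a good family witnessing $M$-goodness has all the relevant abelianization exponents dividing $M$, hence $N$. Therefore $H'$ is $N$-good, and then Lemma~\ref{lem:good-groups}(b) applied to the covering $\{H'\}$ shows $H$ is $N$-good; combining with the first paragraph gives that $G$ is $N$-good. The one point needing care — the ``main obstacle'' — is the $\ell \ne p$ case of the middle paragraph: the prime-to-$p$ subgroup $H' = C' \times P$ need not be cyclic, so Lemma~\ref{lem:good-groups}(c) does not literally apply, and one must instead invoke the slightly more general fact (with the identical proof) that $\{A\}$ covers $A \times B$ whenever $B$ is a $p$-group.
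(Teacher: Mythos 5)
Your proof is correct and follows essentially the same route as the paper, which also combines parts (a)--(e) of Lemma~\ref{lem:good-groups}: cover $G$ by elementary subgroups, strip off a $p$-group direct factor to reach groups of order prime to $p$, and apply part (e). The one point you flag as needing care is real --- for an $\ell$-elementary subgroup with $\ell \ne p$ whose cyclic factor has order divisible by $p$, part (c) as stated does not literally apply --- and your patch (that $\{A\}$ covers $A \times B$ for any $p$-group $B$, by the same one-line argument) is exactly the right fix; the paper's own proof elides this case.
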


\begin{proof}
By parts (a), (c), and (d) of Lemma~\ref{lem:good-groups}, $G$ has a covering by its subgroups of order prime to $p$. For each of these groups, its set of subgroups is good by part (e). Now finish by applying (b).
\end{proof}

We now construct a good collection of subgroups for the symmetric group $S_n$ in good characteristic. Given a partition $\lambda = (\lambda_1, \dots, \lambda_n)$ with $\sum_i \lambda_i = n$, let $S_\lambda = S_{\lambda_1} \times \cdots \times S_{\lambda_n}$ be the corresponding Young subgroup of $S_n$.

\begin{proposition} \label{Sngood}
Suppose that $n!$ is invertible in $\bk$. Then $\{S_\lambda\}$ is a good collection of subgroups of the symmetric group $S_n$. In particular, $S_n$ is $2$-good.
\end{proposition}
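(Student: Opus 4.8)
The plan is to verify the definition of a good family head-on, via the transpose (duality) criterion for split injections. First I would record the structural facts. Since $n!$ is invertible in $\bk$ the characteristic is $0$ or larger than $n$, so the group algebras $\bk[S_n]$, $\bk[S_\lambda]=\bigotimes_i \bk[S_{\lambda_i}]$, and $\bk[S_\lambda^{\ab}]$ are all split semisimple: for each $S_m$ with $m\le n$ because the Specht modules $\bM_\mu$ are absolutely irreducible over any field in which $m!$ is invertible and exhaust the irreducibles, and for $S_\lambda^{\ab}\cong(\bZ/2)^{k(\lambda)}$ (with $k(\lambda)=\#\{i:\lambda_i\ge 2\}$, since $[S_{\lambda_i},S_{\lambda_i}]=A_{\lambda_i}$) because $2$ is invertible. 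Hence every $\cR_\bk(H)$ in sight is a finite-rank free $\bZ$-module whose basis of classes of irreducibles is orthonormal for the pairing $\langle V,W\rangle=\dim_\bk\Hom_H(V,W)$, so this pairing is perfect (and symmetric). I would also note that $[S_\lambda,S_\lambda]=\prod_i A_{\lambda_i}$ has order dividing $n!$, so it is invertible in $\bk$ and $\{S_\lambda\}$ is a legitimate candidate family; and that the exponent of each $S_\lambda^{\ab}$ divides $2$, so that once $\{S_\lambda\}$ is shown good, $S_n$ is automatically $2$-good.

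Next I would use the elementary fact that a map $f\colon A\to B$ of finite-rank free abelian groups equipped with perfect pairings is a split injection (i.e.\ injective with torsion-free cokernel) if and only if its transpose $f^\vee\colon B\to A$ is surjective — indeed if $f^\vee$ is onto it splits since $A$ is free, and dualizing a splitting of $f^\vee$ produces a retraction of $f$. Applying this to the map $\beta\colon\cR_\bk(S_n)\to\bigoplus_\lambda\cR_\bk(S_\lambda^{\ab})$ from the definition of goodness — the composite of restriction $\cR_\bk(S_n)\to\bigoplus_\lambda\cR_\bk(S_\lambda)$ with the coinvariants maps $\cR_\bk(S_\lambda)\to\cR_\bk(S_\lambda^{\ab})$ — I would identify $\beta^\vee$. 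By Frobenius reciprocity, restriction $\mathrm{Res}^{S_n}_{S_\lambda}$ and induction $\mathrm{Ind}_{S_\lambda}^{S_n}$ are mutually transpose for the Hom-pairings; and since taking coinvariants under $[S_\lambda,S_\lambda]$ is left adjoint to inflation $\mathrm{Inf}_{S_\lambda^{\ab}}^{S_\lambda}$, these two are mutually transpose as well. Therefore $\beta^\vee=\sum_\lambda \mathrm{Ind}_{S_\lambda}^{S_n}\circ\mathrm{Inf}_{S_\lambda^{\ab}}^{S_\lambda}$.

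Finally I would prove $\beta^\vee$ is surjective. Restricting attention to the trivial representation of each $S_\lambda^{\ab}$ shows that the image of $\beta^\vee$ contains the class of the Young permutation module $\bk[S_n/S_\lambda]=\mathrm{Ind}_{S_\lambda}^{S_n}(\mathbf 1)$ for every partition $\lambda$ of $n$. By Young's rule (valid since $\bk[S_n]$ is semisimple), $[\bk[S_n/S_\lambda]]=\sum_\mu K_{\mu\lambda}[\bM_\mu]$ where $K_{\mu\lambda}$ is the Kostka number, $K_{\lambda\lambda}=1$, and $K_{\mu\lambda}=0$ unless $\mu$ dominates $\lambda$; so after ordering partitions of $n$ by a linear extension of the dominance order, the transition matrix $(K_{\mu\lambda})$ is unitriangular and hence unimodular, making $\{[\bk[S_n/S_\lambda]]\}_{\lambda\vdash n}$ a $\bZ$-basis of $\cR_\bk(S_n)$. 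Thus $\beta^\vee$ is onto, $\beta$ is a split injection, $\{S_\lambda\}$ is good, and $S_n$ is $2$-good. The only step needing care — the "main obstacle," such as it is — is the bookkeeping in the second paragraph: checking that each functor appearing in $\beta$ is exact (restriction always, coinvariants because $|[S_\lambda,S_\lambda]|$ is invertible) so that it descends to Grothendieck groups, and then correctly matching each one with its transpose under the perfect symmetric Hom-pairings. Everything else is formal once split semisimplicity is in place.
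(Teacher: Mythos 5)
Your proposal is correct and follows essentially the same route as the paper: reduce goodness to surjectivity of the transposed map (induction of inflated representations) via Frobenius reciprocity and the perfect Hom-pairing available in the semisimple setting, then show the Young permutation modules $\bk[S_n/S_\lambda]$ form a $\bZ$-basis of $\cR_\bk(S_n)$ by unitriangularity with respect to dominance order. The paper cites Pieri's rule where you cite Young's rule/Kostka numbers, but this is the same combinatorial input, and your extra bookkeeping (exactness of coinvariants, the coinvariants--inflation adjunction) only makes explicit what the paper leaves implicit.
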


\begin{proof}
Under the assumption on $\chr(\bk)$, the representations of $S_n$ are semisimple. Using Frobenius reciprocity, the restriction map on representation rings is dual to induction. We claim that each irreducible character of $S_n$ is a $\bZ$-linear combination of the permutation representations on $S_n/S_\lambda$ (this implies $\{S_\lambda\}$ is a good collection of subgroups). Recall that the irreducible representations of $S_n$ are indexed by partitions of $n$ (we denote them $\bM_\lambda$). Also, recall the dominance order on partitions: $\lambda \ge \mu$ if $\lambda_1 + \cdots + \lambda_i \ge \mu_1 + \cdots + \mu_i$ for all $i$. An immediate consequence of Pieri's rule \cite[(2.10)]{expos} is that the permutation representation $S_n/S_\lambda$ contains $\bM_\lambda$ with multiplicity $1$ and the remaining representations $\bM_\mu$ that appear satisfy $\mu \ge \lambda$. This proves the claim. 

For the last statement, note that the group $S_{\lambda}^{\ab}$ has exponent $1$ or $2$ for any $\lambda$.
\end{proof}

\subsection{Proof of Theorem~\ref{FSGhilb2}}
\label{ss:FSGproof}

First, we use the good family of subgroups to reduce to the case where each $G_i$ is abelian of invertible order. For such $G$, we identify $\FS^\op_G$-modules with $\FWS^\op_{\Lambda}$-modules, where $\Lambda$ is the group of characters of $G$. The theorem then follows from our results for Hilbert series of $\FWS^\op_{\Lambda}$-modules. We now go through the details.

\begin{lemma} \label{FSGlem3}
Let $\ul{G}=(G_i)_{i \in I}$ be finite groups. For each $i$, let $H_i$ be a subgroup of $G_i$ such that the order of $[H_i,H_i]$ is invertible in $\bk$. Define a functor
\begin{displaymath}
\Phi \colon \Rep_{\bk}(\FS_{\ul{G}}^{\op}) \to \Rep_{\bk}(\FS_{\ul{H}^{\ab}}^{\op})
\end{displaymath}
by letting $\Phi(M)(\ul{S})$ be the $[\ul{H},\ul{H}]^{\ul{S}}$-coinvariants of $M(\ul{S})$. Then we have the following:
\begin{enumerate}[\indent \rm (a)]
\item $\Phi(M)$ is a well-defined object of $\Rep_{\bk}(\FS^\op_{\ul{H}^{\ab}})$.
\item If $M$ is finitely generated then so is $\Phi(M)$.
\item Let $\phi_i \colon \cR_{\bk}(G_i) \to \cR_{\bk}(H_i^{\ab})$ be the map induced by restricting to $H_i$ followed by taking $[H_i,H_i]$-coinvariants, and let $\phi \colon \cR_{\bk}(\ul{G}) \to \cR_{\bk}(\ul{H}^{\ab})$ be the sum of the $\phi_i$. Then $\rH_{\Phi(M)}$ is the image of $\rH_M$ under the ring homomorphism
$\SYM(\cR_{\bk}(\ul{G})_{\bQ}) \to \SYM(\cR_{\bk}(\ul{H}^{\ab})_{\bQ})$ induced by $\phi$.
\end{enumerate}
\end{lemma}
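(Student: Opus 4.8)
The plan is to treat the three parts in order: (a) by a direct construction-and-verification, (b) by factoring $\Phi$ into two functors that each visibly preserve finite generation, and (c) as a compatibility of diagrams on Grothendieck groups.

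For (a), the job is to define $\Phi(M)$ on morphisms and check functoriality. Given a morphism $\ul S\to\ul T$ in $\FS_{\ul H^{\ab}}^{\op}$, that is, a surjection $\ul f\colon\ul T\to\ul S$ together with a decoration $\bar\rho\colon\ul T\to\ul H^{\ab}$, I would lift $\bar\rho$ to some $\rho\colon\ul T\to\ul H\subseteq\ul G$, regard $(\ul f,\rho)$ as a morphism $\ul S\to\ul T$ in $\FS_{\ul G}^{\op}$, apply $M$, and compose with the projection to $[\ul H,\ul H]^{\ul T}$-coinvariants. The content is that the resulting map $M(\ul S)\to M(\ul T)_{[\ul H,\ul H]^{\ul T}}$ is independent of the lift and descends to $M(\ul S)_{[\ul H,\ul H]^{\ul S}}$. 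Both follow from one observation: changing the lift, or precomposing on the source by an element of $[\ul H,\ul H]^{\ul S}$, alters $(\ul f,\rho)$ only by pre- or post-composition with an automorphism of the form $(\id,c)$ with $c$ valued in $[\ul H,\ul H]$ — by the composition law $\tau(x)=\sigma(f(x))\rho(x)$ such an automorphism changes only the decoration — and $M$ of such an automorphism becomes the identity after passing to $[\ul H,\ul H]$-coinvariants. Functoriality follows because $q\colon\ul H\to\ul H^{\ab}$ is a homomorphism, so a composite of lifts is a lift of the composite; and the residual action of $\Aut_{\FS_{\ul H^{\ab}}}(\ul S)=S_{\ul S}\ltimes(\ul H^{\ab})^{\ul S}$ on $\Phi(M)(\ul S)$ is the one coming from $[\ul H,\ul H]^{\ul S}$ being normal in $S_{\ul S}\ltimes\ul H^{\ul S}$. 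This step is bookkeeping, and I do not expect difficulty.

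For (b), I would factor $\Phi=\Phi'\circ\iota^*$, where $\iota^*\colon\Rep_\bk(\FS_{\ul G}^{\op})\to\Rep_\bk(\FS_{\ul H}^{\op})$ is pullback along the inclusion functor $\FS_{\ul H}^{\op}\to\FS_{\ul G}^{\op}$ induced by $\ul H\hookrightarrow\ul G$ (so $\iota^* M$ is $M$ with its automorphism action restricted), and $\Phi'$ is the special case $\ul G=\ul H$ of the present construction, i.e.\ fiberwise $[\ul H,\ul H]$-coinvariants, already known to be well-defined by part (a). It suffices that each functor preserves finite generation. For $\iota^*$, by Proposition~\ref{propFfg} it is enough that the inclusion functor $\FS_{\ul H}^{\op}\to\FS_{\ul G}^{\op}$ satisfies property~(F); this is proved as in Proposition~\ref{FSGpropG}, with ``minimal'' replaced by a coset refinement: given a $\ul G$-surjection $(\ul u,\lambda)\colon\ul S\to x$, the only obstruction to factoring it through $\FS_{\ul H}$ is the function $\ul S\to\ul G/\ul H$, $s\mapsto\lambda(s)\ul H$, so one factors $\ul u$ through the set of pairs (element of $x$, coset of $\ul H$) that occur, a set of size $\le\#x\cdot[\ul G:\ul H]$, of which there are only finitely many choices up to isomorphism. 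For $\Phi'$: it is right exact (coinvariants is tensoring a group algebra with the trivial module) and commutes with direct sums, and on a principal projective $P_x$ of $\FS_{\ul H}^{\op}$ one computes $\Phi'(P_x)\cong P_x$ in $\Rep_\bk(\FS_{\ul H^{\ab}}^{\op})$, since $[\ul H,\ul H]$ is normal in $\ul H$ and hence the $[\ul H,\ul H]^{\ul S}$-coinvariants of the permutation module $\bk[\Hom_{\FS_{\ul H}}(\ul S,x)]$ is the permutation module on $\Hom_{\FS_{\ul H^{\ab}}}(\ul S,x)$. A right-exact functor commuting with direct sums that carries principal projectives to finitely generated modules preserves finite generation, so $\Phi$ does. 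I expect the property~(F) verification for the inclusion functor to be the main obstacle; the rest of (b) is formal.

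For (c), note $\Phi(M)([\bn])$ is obtained from $M([\bn])$ by restricting the $\ul G^{[\bn]}$-action to $\ul H^{[\bn]}$ and then taking $[\ul H,\ul H]^{[\bn]}$-coinvariants. Restriction and coinvariants are exact and compatible with external tensor products, so under the identifications $\cR_\bk(A\times B)=\cR_\bk(A)\otimes\cR_\bk(B)$ (valid over the algebraically closed field on which $\rH$ is defined) the induced map $\cR_\bk(\ul G^{[\bn]})=\bigotimes_i\cR_\bk(G_i)^{\otimes n_i}\to\bigotimes_i\cR_\bk(H_i^{\ab})^{\otimes n_i}=\cR_\bk(\ul H^{\ab,[\bn]})$ is the corresponding tensor power of the $\phi_i$, i.e.\ the map induced by $\phi$. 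This map commutes with the projections onto $\Sym^{\vert\bn\vert}$ of the respective representation rings because those projections are natural with respect to additive maps. Hence $[\Phi(M)]_{\bn}$ is the degree-$\vert\bn\vert$ part of $\SYM(\phi)$ applied to $[M]_{\bn}$, and summing over $\bn$ gives $\rH_{\Phi(M)}=\SYM(\phi)(\rH_M)$. Once the diagram is written out this part is purely formal.
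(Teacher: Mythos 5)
Your proposal is correct. Parts (a) and (c) are essentially the paper's argument with more detail filled in: for (a) the paper only checks explicitly that the subspace spanned by the elements $gm-m$ is stable under the underlying $(\FS^{\op})^I$-structure and asserts that ``the group actions clearly carry through''; your lifting argument, and in particular the observation that conjugating $[\ul{H},\ul{H}]$ by elements of $\ul{H}$ stays inside $[\ul{H},\ul{H}]$ by normality, is exactly the content behind that assertion. Part (b) is where you genuinely diverge. The paper restricts all the way down to $(\FS^{\op})^I$: by Propositions~\ref{FSGpropG} and~\ref{propFfg} the restriction of $M$ is finitely generated, the restriction of $\Phi(M)$ is a quotient of it (hence finitely generated), and Proposition~\ref{pullback-fg} (essential surjectivity) lifts finite generation back up to $\Rep_{\bk}(\FS^{\op}_{\ul{H}^{\ab}})$. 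You instead factor through $\FS_{\ul{H}}^{\op}$, which forces you to prove property~(F) for the inclusion $\FS_{\ul{H}}^{\op} \to \FS_{\ul{G}}^{\op}$ --- your coset refinement of the ``minimal morphism'' argument does this correctly and is a genuine, if mild, generalization of Proposition~\ref{FSGpropG} --- and then to compute that fiberwise coinvariants carries the principal projective $P_x$ to $P_x$. The paper's route is shorter and reuses only results already stated; your route costs the extra property-(F) verification but yields strictly more information along the way, namely that restriction to $\FS_{\ul{H}}^{\op}$ preserves finite generation and that the coinvariants functor preserves principal projectives, neither of which is visible from the paper's argument.
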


\begin{proof}
(a) For a tuple $\ul{S}=(S_i)_{i \in I}$ of sets, let $K(\ul{S})$ be the $\bk$-subspace of $M(\ul{S})$ spanned by elements of the form $gm-m$ with $g \in [\ul{H},\ul{H}]^{\ul{S}}$ and $m \in M(\ul{S})$. If $f \colon \ul{S} \to \ul{T}$ is a morphism in $\FS^I$ then the induced map $f^* \colon M(\ul{T}) \to M(\ul{S})$ carries $gm-m$ to $f^*(g) f^*(m)-f^*(m)$. Thus $K$ is a $(\FS^{\op})^I$-submodule of $M$, and so $M/K$ is a well-defined $(\FS^{\op})^I$-module. The group actions clearly carry through, and so $\Phi(M)$ is well-defined.

(b) Suppose that $M \in \Rep_{\bk}(\FS^{\op}_{\ul{G}})$ is finitely generated. Then the restriction of $M$ to $(\FS^{\op})^I$ is finitely generated by Propositions~\ref{FSGpropG} and~\ref{propFfg}. The restriction of $\Phi(M)$ to $(\FS^{\op})^I$ is a quotient of the restriction of $M$, and is therefore finitely generated. Thus $\Phi(M)$ is finitely generated, by Proposition~\ref{pullback-fg}.

(c) This is clear.
\end{proof}

\begin{proposition} \label{FSunionG}
The functor $\Phi \colon \FS^{\op} \times \FS^{\op} \to \FS^{\op}$ given by disjoint union satisfies property~{\rm (F)}.
\end{proposition}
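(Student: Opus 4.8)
The plan is to argue directly, in the spirit of the proof of Proposition~\ref{prop:segre-FIG}. Recall that a morphism $x \to \Phi(y_1, y_2)$ in $\FS^\op$ is the same as a surjection $y_1 \amalg y_2 \surj x$ in $\FS$, and that a morphism $(y_1^0, y_2^0) \to (y_1, y_2)$ in $\FS^\op \times \FS^\op$ is a pair of surjections $y_1 \surj y_1^0$, $y_2 \surj y_2^0$; under these identifications the required factorization $f = \Phi(g) \circ f_i$ becomes a factorization of surjections, namely $(y_1 \amalg y_2 \surj x) = (y_1^0 \amalg y_2^0 \surj x) \circ (y_1 \amalg y_2 \surj y_1^0 \amalg y_2^0)$.

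So fix a finite set $x$. I would take the objects $y_i$ to be the pairs $(A,B)$ of subsets of $x$ with $A \cup B = x$ --- there are only finitely many --- and let $f_{(A,B)} \colon x \to \Phi(A,B)$ be the ``fold'' surjection $A \amalg B \surj x$ (the inclusion on each summand), which is a genuine morphism of $\FS$ precisely because $A \cup B = x$. Now given any pair $(y_1,y_2)$ and any morphism $f \colon x \to \Phi(y_1, y_2)$, i.e.\ a surjection $v \colon y_1 \amalg y_2 \surj x$, set $A = v(y_1)$ and $B = v(y_2)$. Then $A \cup B = x$, so $(A,B)$ is one of our chosen objects. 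The map $v$ restricts to surjections $y_1 \surj A$ and $y_2 \surj B$, which together constitute a morphism $g \colon (A,B) \to (y_1, y_2)$ in $\FS^\op \times \FS^\op$; by construction $v$ equals the composite $y_1 \amalg y_2 \surj A \amalg B \surj x$, that is, $f = \Phi(g) \circ f_{(A,B)}$. This verifies property~(F).

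The only thing to watch is the bookkeeping of the opposite categories --- which way the surjections point, and that composition in $\FS^\op \times \FS^\op$ corresponds to disjoint union of surjections in $\FS$ --- but once that is set up the verification is immediate, so I do not anticipate any real obstacle. It is perhaps worth remarking that this is an instance of the general principle that ``join''-type monoidal products on combinatorial categories satisfy property~(F) (compare \cite[\S 3.3]{catgb}), but the direct argument above is the shortest route here.
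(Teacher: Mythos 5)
Your argument is correct and is essentially identical to the paper's proof: both factor a surjection $v \colon y_1 \amalg y_2 \surj x$ through $v(y_1) \amalg v(y_2) \surj x$ and take as the finite test family the pairs of subsets of $x$ whose union is $x$. The bookkeeping of the opposite categories is handled correctly, so nothing further is needed.
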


\begin{proof}
Pick a finite set $S$. Let $f \colon T_1 \amalg T_2 \to S$ be a surjection. Then this can be factored as $T_1 \amalg T_2 \to f(T_1) \amalg f(T_2) \to S$ where the first map is the image of a morphism $(T_1, T_2) \to (f(T_1), f(T_2))$ under $\Phi^\op$. So for the $y_1, y_2, \dots$ in the definition of property~(F), we take the pairs $(T, T')$ of subsets of $S$ whose union is all of $S$.
\end{proof}

\begin{lemma} \label{FSGlem4}
Let $\ul{G}=(G_i)_{i \in I}$ be groups, let $f \colon J \to I$ be a surjection, and let $f^*(\ul{G})$ be the resulting family of groups indexed by $J$. Let $\Phi \colon \FS_{f^*(\ul{G})}^{\op} \to \FS_{\ul{G}}^{\op}$ be the functor induced by disjoint union, i.e., $\Phi(\{S_j\}_{j \in J})=\{T_i\}_{i \in I}$ where $T_i=\coprod_{j \in f^{-1}(i)} S_j$.
\begin{enumerate}[\indent \rm (a)]
\item $\Phi$ satisfies property~{\rm (F)}; in particular, if $M$ is finitely generated then so is $\Phi^*(M)$.
\item Let $\phi_i \colon \cR_{\bk}(G_i) \to \bigoplus_{j \in f^{-1}(i)} \cR_{\bk}(G_j)$ be the diagonal map, and let $\phi \colon \cR_{\bk}(\ul{G}) \to \cR_{\bk}(f^*(\ul{G}))$ be the sum of the $\phi_i$. Then $\rH_{\Phi^*(M)}$ is the image of $\rH_M$ under the ring homomorphism $\SYM(\cR_{\bk}(\ul{G})_{\bQ}) \to \SYM(\cR_{\bk}(f^*(\ul{G}))_{\bQ})$ induced by $\phi$.
\end{enumerate}
\end{lemma}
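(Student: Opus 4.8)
\emph{Part (a).} The plan is to reduce to Proposition~\ref{FSunionG}. Since $G_j=G_{f(j)}$, the category $\FS^\op_{f^*(\ul G)}=\prod_{j\in J}\FS^\op_{G_j}$ is identified with $\prod_{i\in I}\bigl(\prod_{j\in f^{-1}(i)}\FS^\op_{G_i}\bigr)$, and under this identification $\Phi$ becomes the product over $i\in I$ of the iterated disjoint-union functors $\Phi_i\colon\prod_{j\in f^{-1}(i)}\FS^\op_{G_i}\to\FS^\op_{G_i}$. Each $\Phi_i$ is a composite of two-factor disjoint-union functors $\FS^\op_{G_i}\times\FS^\op_{G_i}\to\FS^\op_{G_i}$ (fold the first two coordinates, then iterate), and each of these satisfies property~(F): the argument of Proposition~\ref{FSunionG} adapts immediately to the $G$-labelled setting, since when one factors $T_1\amalg T_2\to S$ through $f(T_1)\amalg f(T_2)\to S$ (the second map given the trivial $G$-labelling) the $G$-labels on $T_1,T_2$ are simply carried along and there are still only finitely many choices for $(f(T_1),f(T_2))$. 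Hence each $\Phi_i$ satisfies property~(F) by Proposition~\ref{propFcomp}(a), and a product of property-(F) functors again satisfies property~(F) directly from Definition~\ref{def:propF} (for an object $x=(x_i)_i$, use the products of the finitely many witnessing data at the $x_i$). The last clause is then Proposition~\ref{propFfg}.

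\emph{Part (b).} This is an unwinding of definitions. For $\bm\in\bN^J$ put $\bn=f_*(\bm)$, so $n_i=\sum_{j\in f^{-1}(i)}m_j$; then $\Phi([\bm])\cong[\bn]$ in $\FS_{\ul G}$ via any bijections $\coprod_{j\in f^{-1}(i)}[m_j]\cong[n_i]$, and such a choice identifies the subgroup $\prod_j G_j^{m_j}\le\Aut([\bm])$ with $\prod_i G_i^{n_i}\le\Aut([\bn])$. Consequently $\Phi^*(M)([\bm])$ is, as a $\prod_j G_j^{m_j}$-representation, equal to $M([\bn])$, so $[\Phi^*M([\bm])]$ is the transport of $[M([\bn])]$ along the induced isomorphism $\bigotimes_i\cR_\bk(G_i)^{\otimes n_i}\xrightarrow{\ \sim\ }\bigotimes_j\cR_\bk(G_j)^{\otimes m_j}$. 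The plan is then to collect, in the defining sum $\rH_{\Phi^*M}=\sum_{\bm\in\bN^J}[\Phi^*M]_\bm$, the terms with a fixed value $\bn=f_*(\bm)$, and to verify
\[
\sum_{\bm:\, f_*(\bm)=\bn}[\Phi^*M]_\bm \;=\; \psi\bigl([M]_\bn\bigr)
\]
in the degree-$|\bn|$ part of $\SYM(\cR_\bk(f^*\ul G)_\bQ)$, where $\psi$ is the ring homomorphism induced by $\phi$; summing over $\bn$ then gives $\rH_{\Phi^*M}=\psi(\rH_M)$. Since both $[M]_\bn$ and each $[\Phi^*M]_\bm$ with $f_*(\bm)=\bn$ are obtained by applying the relevant structure maps to the single element $[M([\bn])]\in\cR_\bk(\ul G^\bn)$, the displayed identity is equivalent to an identity between two maps $\cR_\bk(\ul G^\bn)\to\Sym^{|\bn|}(\cR_\bk(f^*\ul G))$ that does not involve $M$ (and one only needs it on the $S_\bn$-invariant elements, where $[M([\bn])]$ lives), so one checks it there.

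\emph{The main obstacle} is precisely that last identity. One must see how the redistribution of the $\prod_i G_i^{n_i}$-isotypic decomposition of $M([\bn])$ across the refined index set $J$ — summed over all refinements $\bm$ of $\bn$, each contribution weighted implicitly by the symmetrization built into $[\Phi^*M]_\bm$ — reproduces the effect of $\psi$ on the degree-$|\bn|$ polynomial $[M]_\bn$. The comparison is governed by the way the multinomial coefficients from refining $\bn$ interact with the $S_\bn$-symmetrizations appearing in the definition of the (enhanced) Hilbert series and in the induced map $\psi$. There is no conceptual difficulty here, but one must be scrupulous about these normalizations for the two sides to coincide; once the conventions are pinned down the verification is routine combinatorics.
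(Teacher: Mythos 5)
\emph{Part (a).} Your argument is correct but takes a mildly different route from the paper's. The paper forms the commutative square with $(\FS^{\op})^J \to (\FS^{\op})^I$ along the bottom, invokes Proposition~\ref{FSunionG} for the bottom arrow and Proposition~\ref{FSGpropG} for the vertical arrows, and then deduces property~(F) for $\Phi$ from Proposition~\ref{propFcomp}(b), using that $(\FS^{\op})^J \to \FS^{\op}_{f^*(\ul{G})}$ is essentially surjective. This avoids re-examining the disjoint-union functor in the labelled setting. You instead re-prove the $G$-labelled analogue of Proposition~\ref{FSunionG} directly; your factorization is valid (put the trivial label on $f(T_1)\amalg f(T_2)\to S$ and carry the given labels on the first map, which composes to the original labelled surjection), and the closure of property~(F) under finite products and composites is as routine as you say. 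Either argument is acceptable and of comparable length.

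\emph{Part (b).} Here there is a genuine gap. You correctly reduce the claim to the identity $\sum_{\mathbf{m}\colon f_*(\mathbf{m})=\bn}[\Phi^*M]_{\mathbf{m}}=\psi([M]_{\bn})$ and then stop, calling the remaining step ``routine combinatorics'' once conventions are pinned down. But that verification \emph{is} the statement---everything before it is bookkeeping---and it is exactly where the argument can fail. It is a coefficient-by-coefficient match between the binomial coefficients $\prod_i\binom{n_i}{(m_j)_{j\in f^{-1}(i)}}$ produced by expanding $\phi(v)=\sum_{j\in f^{-1}(i)}v_j$ inside $\Sym^{|\bn|}$, and a count of tensor-slot assignments of a given type within each block of the partition of the slots determined by $\mathbf{m}$ (this is where the $S_{\bn}$-equivariance of $M([\bn])$ enters, to say the multiplicity depends only on the type). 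Whether these agree depends on how the map $\cR_{\bk}(\ul{G}^{\bn})\to\Sym^{|\bn|}(\cR_{\bk}(\ul{G}))$ is normalized. Already in the simplest case ($G_i$ trivial, $I$ a point, $J$ two points), reading that map as the naive quotient $V^{\otimes n}\to\Sym^n V$ gives $\rH_{\Phi^*M}=\sum_{m_1,m_2}\dim M([m_1+m_2])\,t_1^{m_1}t_2^{m_2}$ versus $\psi(\rH_M)=\sum_n\dim M([n])\,(t_1+t_2)^n$, which differ by $\binom{m_1+m_2}{m_1}$. The identity does hold if one identifies $S_{\bn}$-invariant tensors with $\Sym^{|\bn|}$ via the inverse of the symmetrization map (equivalently, divides the naive image by $\bn!$): then both coefficients of a fixed monomial equal the common multiplicity divided by the factorial of the exponent vector. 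So you must fix the normalization explicitly and carry out this count; as written, the crux of (b) is asserted rather than proved. (The paper itself says only that ``(b) is clear,'' but a self-contained proof cannot omit this step.)
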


\begin{proof}
Consider the commutative diagram of categories
\begin{displaymath}
\xymatrix{
\FS^{\op}_{f^*(\ul{G})} \ar[r]^{\Phi} & \FS^{\op}_{\ul{G}} \\
(\FS^{\op})^J \ar[r]^{\Phi'} \ar[u] & (\FS^{\op})^I \ar[u] }.
\end{displaymath}
The functor $\Phi'$ is defined just like $\Phi$; it satisfies property~(F) by Proposition~\ref{FSunionG}. The vertical maps satisfy property~(F) by Proposition~\ref{FSGpropG}. Thus $\Phi$ satisfies property~(F) by Proposition~\ref{propFcomp}. This proves (a); (b) is clear.
\end{proof}

\begin{lemma} \label{FSGlem5}
Suppose that $\ul{G}=(G_i)_{i \in I}$ is a family of commutative groups of exponents dividing $N$. Suppose that $N$ is invertible in $\bk$ and that $\bk$ contains the $N$th roots of unity. Let $\Lambda_i=\Hom(G_i, \bk^{\times})$ be the group of characters of $G_i$, and let $\ul{\Lambda}=(\Lambda_i)_{i \in I}$. Then there is an equivalence $\Phi \colon \Rep_{\bk}(\FS_{\ul{G}}^{\op}) \to \Rep_{\bk}(\FWS_{\ul{\Lambda}}^{\op})$ respecting Hilbert series, i.e., $\rH_M=\rH_{\Phi(M)}$ for $M \in \Rep_{\bf}(\FS_G^{\op})$.
\end{lemma}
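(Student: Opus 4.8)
The plan is to realize both sides as categories of $\Delta$-modules, in the sense of \S\ref{ss:gendelta}, over equivalent symmetric cotensor categories. \emph{Step 1.} I would record the standard equivalence between $\Rep_\bk(G)$ and the category $\mathcal{V}$ of $\Lambda$-graded $\bk$-vector spaces: since $G$ is abelian of exponent dividing $N$, $N$ is invertible in $\bk$, and $\bk$ contains the $N$th roots of unity, the elements $e_\chi=\frac{1}{\#G}\sum_{g\in G}\chi(g)^{-1}g$ for $\chi\in\Lambda$ are orthogonal idempotents summing to $1$, so $\bk[G]\cong\prod_{\chi\in\Lambda}\bk$ and $V\mapsto(e_\chi V)_{\chi\in\Lambda}$ is an exact $\bk$-linear equivalence. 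The essential point is that this equivalence carries the cotensor structure on $\Rep_\bk(G)$ (the comultiplication $V\mapsto\Ind_{\Delta G}^{G\times G}V$) to the comultiplication on $\mathcal{V}$ obtained by pulling back a grading along the addition map $\Lambda\times\Lambda\to\Lambda$. This comes down to the identity $\Ind_{\Delta G}^{G\times G}\chi=\bigoplus_{\chi_1+\chi_2=\chi}\chi_1\boxtimes\chi_2$, which holds because $G$ is abelian, so the induced character is supported on the diagonal; symmetry and coassociativity are immediate.

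\emph{Step 2.} I would then observe that, unwinding definitions, $\Rep_\bk(\FWS_\Lambda^\op)$ is exactly the category of $\Delta$-modules over $\mathcal{V}$ with this pullback comultiplication. Indeed $\mathcal{V}^{\otimes S}$ is the category of $\Lambda^S$-graded spaces, i.e.\ the datum of a $\bk$-vector space $M(S,\phi)$ for each weighting $\phi\colon S\to\Lambda$; for a surjection $f\colon T\to S$ the cotensoring functor $f^*\colon\mathcal{V}^{\otimes S}\to\mathcal{V}^{\otimes T}$ sends $M(S,-)$ to $\psi\mapsto M(S,f_*\psi)$, where $(f_*\psi)(s)=\sum_{t\in f^{-1}(s)}\psi(t)$, so a $\Delta$-module structure map $f^*(M_S)\to M_T$ is precisely a collection of maps $M(S,f_*\psi)\to M(T,\psi)$ --- the same data (with the same compatibility relations) as the action of the $\FWS_\Lambda^\op$-morphism $(S,f_*\psi)\to(T,\psi)$, recalling that a morphism $(T,\psi)\to(S,\phi)$ in $\FWS_\Lambda$ is a surjection $f\colon T\to S$ with $f_*\psi=\phi$. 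Since the formation of $\Delta$-modules is functorial in the cotensor category, Step~1 together with the identification $\Rep_\bk(\FS_G^\op)\simeq\{\Delta\text{-modules over }\Rep_\bk(G)\}$ from \S\ref{ss:gendelta} produces the asserted equivalence $\Phi\colon\Rep_\bk(\FS_G^\op)\to\Rep_\bk(\FWS_\Lambda^\op)$. (One can also define $\Phi(M)(S,\phi)$ to be the $\phi$-weight space $e_\phi M(S)$ of $M(S)$ under $G^S\subseteq\Aut_{\FS_G}(S)$, let the transition map attached to a surjection $f\colon S\to T$ with $f_*\phi=\psi$ be $(f,1)^*$ followed by the projection onto $e_\phi M(S)$, and then verify functoriality, fullness and faithfulness by hand; the cotensor description just organizes this bookkeeping.)

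\emph{Step 3.} Finally I would check that $\Phi$ preserves Hilbert series. Under Step~1, $\cR_\bk(G)$ is identified with the free abelian group on $\Lambda$ via $[V]\mapsto\sum_{\lambda\in\Lambda}(\dim V_\lambda)\,t_\lambda$, so $\cR_\bk(G)_\bQ=\bigoplus_{\lambda\in\Lambda}\bQ\,t_\lambda$ and $[M]_n\in\Sym^n(\cR_\bk(G)_\bQ)$ is the image of $[M([n])]\in\cR_\bk(G^n)=\cR_\bk(G)^{\otimes n}$, namely $\sum_{\phi\colon[n]\to\Lambda}(\dim M([n])_\phi)\,t_{\phi(1)}\cdots t_{\phi(n)}$. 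Grouping the weightings $\phi$ by their profile $\bm\in\bN^\Lambda$ with $|\bm|=n$, and using that $S_n\subseteq\Aut_{\FS_G}([n])$ permutes the weight spaces (so $\dim M([n])_\phi$ depends only on the profile of $\phi$ and equals $\dim\Phi(M)([\bm])$), the coefficient of $\bt^\bm$ becomes $C_\bm\dim\Phi(M)([\bm])$ with $C_\bm=n!/\bm!$ the number of weightings of profile $\bm$; summing over $n$ recovers the defining formula for $\rH_{\Phi(M)}$, hence $\rH_M=\rH_{\Phi(M)}$. The case of a family $\ul G=(G_i)_{i\in I}$ then follows by taking the product over $i\in I$ of these equivalences, since $\FS_{\ul G}=\prod_i\FS_{G_i}$ and $\FWS_{\ul\Lambda}=\prod_i\FWS_{\Lambda_i}$, and both Hilbert series are multiplicative over $I$ in the same way (which is exactly what the product of multinomial coefficients in the definition of $\rH_M$ encodes).

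I expect the main obstacle to be Step~2: making sure that the comultiplication transported from $\Rep_\bk(G)$ is the one whose iterated $\Delta$-modules are $\FWS_\Lambda^\op$-modules --- rather than some variant such as a diagonal comultiplication --- which rests on the character identity for $\Ind_{\Delta G}^{G\times G}\chi$ in Step~1 and on keeping the $(-)^\op$ and pushforward/pullback conventions straight in the two descriptions of $\Delta$-modules.
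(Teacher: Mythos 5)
Your proof is correct, and the paper's own argument is precisely the direct construction you give in the parenthetical of Step 2: decompose $M(\ul{S})$ into weight spaces for $\ul{G}^{\ul{S}}$, observe that a morphism carries the $\ul{\phi}$-weight space into the $\ul{f}_*(\ul{\phi})$-weight space, and match the multinomial coefficient in the definition of $\rH_{\Phi(M)}$ with the number of weightings of a given profile. Your primary route through $\Delta$-modules over cotensor categories is a clean repackaging of the same computation --- the character identity for $\Ind_{\Delta G}^{G\times G}\chi$ and the bookkeeping in Step 3 are both right --- though it leans on the identification of $\FS_G^{\op}$-modules with $\Delta$-modules over $\Rep_{\bk}(G)$, which \S\ref{ss:gendelta} asserts as an example without a formal proof, so the fallback direct construction is the one doing the real work.
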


Before giving the proof, we offer two clarifications. First, $\FWS_{\ul{\Lambda}}$ denotes the category $\prod_{i \in I} \FWS_{\Lambda_i}$. An object of this category is a tuple of sets $\ul{S}=(S_i)_{i \in I}$ equipped with a weight function $\phi_i \colon S_i \to \Lambda_i$ for each $i$. Second, $\rH_M$ and $\rH_{\Phi(M)}$ are both series in variables indexed by the characters of the $G_i$. This is why they are comparable.

\begin{proof}
Let $M$ be a representation of $\FS_{\ul{G}}^{\op}$. Let $\ul{S}=(S_i)_{i \in I}$ be a tuple of sets. Then we have a decomposition
\begin{displaymath}
M_{\ul{S}} = \bigoplus M_{\ul{S}, \ul{\phi}},
\end{displaymath}
where the sum is over weightings $\ul{\phi}$ of $\ul{S}$, and $M_{\ul{S}, \ul{\phi}}$ is the subspace of $M_{\ul{S}}$ on which $\ul{G}^{\ul{S}}$ acts through $\ul{\phi}$. If $\ul{f} \colon \ul{S} \to \ul{T}$ is a morphism in $\FS_{\ul{G}}^{\op}$ then the map $\ul{f}{}_* \colon M_{\ul{S}} \to M_{\ul{T}}$ carries $M_{\ul{S}, \ul{\phi}}$ into $M_{\ul{T}, \ul{f}{}_*(\ul{\phi})}$. We define $\Phi(M)$ to be the functor on $\FWS_{\ul{\Lambda}}^{\op}$ which assigns to a weighted set $(\ul{S}, \ul{\phi})$ the space $M_{\ul{S}, \ul{\phi}}$. This construction can be reversed: given a representation $M$ of $\FWS_{\ul{\Lambda}}^{\op}$, we can build a representation of $\FS_{\ul{G}}^{\op}$ by defining $M_{\ul{S}}$ to be the sum of the $M_{\ul{S}, \ul{\phi}}$. We leave to the reader the verification that these constructions are quasi-inverse to each other. This shows that $\Phi$ is an equivalence. It is clear that it preserves Hilbert series: we note that the multinomial coefficients in the definition of $\rH_{\Phi(M)}$ count, for each $M_{\ul{S}, \ul{\phi}}$, the number of $M_{\ul{S}, \ul{\phi'}}$ where $\ul{\phi'}$ is a permutation of $\ul{\phi}$.
\end{proof}

\begin{lemma} \label{FSGlem1}
Let $i \colon \Xi \to \Xi'$ be a split injection of finite rank free $\bZ$-modules, and pick $f \in \SYM(\Xi_{\bQ})$. Suppose that $i(f) \in \SYM(\Xi'_{\bQ})$ is $\cK_N$. Then $f$ is $\cK_N$.
\end{lemma}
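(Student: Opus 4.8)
The plan is a direct specialization argument whose only essential input is that $i$ is \emph{split}: this lets us extend a $\bZ$-basis of $\Xi$ to one of $\Xi'$, and then restrict the given expression for $i(f)$ by killing the extra variables. Concretely, write $\Xi' = i(\Xi) \oplus C$ with $C$ free of finite rank, fix a $\bZ$-basis $t_1, \dots, t_r$ of $\Xi$ and a $\bZ$-basis $u_1, \dots, u_c$ of $C$, and let $i(t_1), \dots, i(t_r), u_1, \dots, u_c$ be the resulting $\bZ$-basis of $\Xi'$, which I abbreviate as $t_1, \dots, t_r, u_1, \dots, u_c$. Under the identifications $\SYM(\Xi_\bQ) = \bQ \lbb t_1, \dots, t_r \rbb$ and $\SYM(\Xi'_\bQ) = \bQ \lbb t_1, \dots, t_r, u_1, \dots, u_c \rbb$, the map $\SYM(i)$ is simply the inclusion of power series rings. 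By the basis-independence of the class $\cK_N$ (the coordinate-free reformulation stated after Theorem~\ref{qordhilb}), the hypothesis that $i(f)$ is $\cK_N$ means that, with respect to \emph{this} basis of $\Xi'$, one can write $i(f) = g/h$ where $g, h \in \bQ(\zeta_N)[t_1, \dots, t_r, u_1, \dots, u_c]$ and $h = \prod_{k=1}^n (1 - \lambda_k)$ with each $\lambda_k$ a $\bZ[\zeta_N]$-linear combination of the $t$'s and $u$'s. Each factor $1 - \lambda_k$ has constant term $1$, so $h$ is a unit in the power series ring and $i(f) \cdot h = g$ is a genuine identity of power series.

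Next I would apply the substitution $u_1 = \dots = u_c = 0$. This is a continuous ring homomorphism $\rho \colon \bQ(\zeta_N)\lbb t_1, \dots, t_r, u_1, \dots, u_c \rbb \to \bQ(\zeta_N)\lbb t_1, \dots, t_r \rbb$; on the $\bQ$-subalgebras it is induced by the projection $\Xi' \to i(\Xi) = \Xi$, and since that projection splits $i_\bQ$ we have $\rho \circ \SYM(i) = \mathrm{id}$, whence $\rho(i(f)) = f$. Applying $\rho$ to $i(f) \cdot h = g$ gives $f \cdot \rho(h) = \rho(g)$. Here $\rho(g) \in \bQ(\zeta_N)[t_1, \dots, t_r]$, while $\rho(h) = \prod_{k=1}^n (1 - \rho(\lambda_k))$, and $\rho(\lambda_k)$ is obtained from $\lambda_k$ by discarding the terms involving the $u$'s, hence is still a $\bZ[\zeta_N]$-linear combination of $t_1, \dots, t_r$. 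Since each factor $1 - \rho(\lambda_k)$ again has constant term $1$, $\rho(h)$ is a unit, so $f = \rho(g)/\rho(h)$, which exhibits $f$ as a $\cK_N$ function in the $\bZ$-basis $t_1, \dots, t_r$ of $\Xi$; therefore $f$ is $\cK_N$.

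I do not anticipate a genuine obstacle here: the entire content is that setting a subset of the variables to zero is a well-defined ring operation on formal power series which preserves both polynomiality of the numerator and the factored shape $\prod(1 - \lambda_k)$ of the denominator. The one point to be careful about is that splitness of $i$ is genuinely needed — it is precisely what forces $\rho$ to carry the $\bZ[\zeta_N]$-lattice spanned by the chosen basis of $\Xi'$ \emph{onto} (not merely into a $\bQ$-span of) the lattice spanned by the basis of $\Xi$. For a merely injective $i$ the statement fails: take $i$ to be multiplication by $2$ on $\bZ$ and $f = 1/(1 - \tfrac{1}{2}t)$, whose image $1/(1-u)$ is $\cK_N$ although $f$ is not.
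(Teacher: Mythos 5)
Your proposal is correct and is essentially the paper's proof made explicit: the paper simply picks a splitting $j$ of $i$, notes $f = j(i(f))$, and observes that the induced map on $\SYM$ takes $\cK_N$ functions to $\cK_N$ functions, which is exactly your substitution $\rho$ (killing the complementary variables) together with the verification that the numerator stays polynomial and the denominator keeps its factored form. Your closing counterexample showing splitness is necessary is a nice extra not in the paper.
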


\begin{proof}
Let $j \colon \Xi' \to \Xi$ be a splitting of $i$. Then $f=j(i(f))$. Since $j$ clearly takes $\cK_N$ functions to $\cK_N$ functions, it follows that $f$ is $\cK_N$.
\end{proof}

\begin{proof}[Proof of Theorem~\ref{FSGhilb2}]
Let $M$ be a finitely generated representation of $\FS^{\op}_{\ul{G}}$, where $\ul{G}=(G_i)_{i \in I}$. For each $i \in I$, let $\{H_j\}_{j \in J_i}$ be a good collection of subgroups of $\ul{G}$ such that the exponent of each $H_j^{\ab}$ divides $N$. Let $J=\coprod_{i \in I} J_i$ and let $f \colon J \to I$ be the projection map. Then we have functors
\begin{displaymath}
\Rep_{\bk}(\FS^{\op}_{\ul{G}}) \to \Rep_{\bk}(\FS^{\op}_{f^*(\ul{G})}) \to \Rep_{\bk}(\FS^{\op}_{\ul{H}^{\ab}}).
\end{displaymath}
Let $M'$ be the image of $M$ under the composition. By Lemmas~\ref{FSGlem3} and~\ref{FSGlem4}, $M'$ is finitely generated and $\rH_{M'}$ is the image of $\rH_M$ under the ring homomorphism corresponding to the natural additive map $\cR_{\bk}(\ul{G}) \to \cR_{\bk}(\ul{H}^{\ab})$. By Lemma~\ref{FSGlem5} and Theorem~\ref{FSWhilb}, $\rH_{M'}$ is $\cK_N$. Thus by Lemma~\ref{FSGlem1}, $\rH_M$ is $\cK_N$. This completes the proof.
\end{proof}

\end{document}